\documentclass[12pt]{article}
\usepackage{graphicx} 
\usepackage[utf8]{inputenc}
\usepackage{graphicx}
\usepackage{amsmath}
\usepackage{amsthm}
\usepackage{authblk}
\usepackage{amssymb}
\usepackage{csquotes}
\usepackage{enumitem}
\newtheorem{theorem}{Theorem}[section]
\newtheorem{definition}{Definition}[section]
\newtheorem{corollary}{Corollary}[section]
\newtheorem{lemma}{Lemma}[section]
\newtheorem{remark}{Remark}
\usepackage[top=3cm, bottom=3cm, right=2.5cm, left=2.5cm,headheight=2.5cm,footskip=15pt, a4paper]{geometry}
\usepackage{newclude}
\usepackage{fancyhdr}
\pagestyle{fancy}
\fancypagestyle{plain}{
\fancyhead[R]{\thepage}
\fancyfoot{}

  \fancyfoot[L]{\small Date: August 6, 2025\\
  Key words and Phrases. Dirac operators, inverse spectral theory, de Branges spaces, canonical systems
  \\
  2020 Mathematics Subject Classification. 34L40 34B40 46C07 81Q10}
}

\fancyhead{}
\usepackage{setspace}
\usepackage{lipsum}
 \usepackage{geometry}
 \geometry{a4paper}
\title{\large \bf GELFAND-LEVITAN CONDITION FOR DIRAC OPERATORS}
\author{\small JIE ZENG }
\date{}
\begin{document}
\maketitle
\begin{abstract}
 We discuss how to generalize a Dirac operator such that the solution of a Dirac equation is of bounded variation rather than continuous. We build the spectral theory for generalized Dirac operators and discuss the connection between them and canonical systems. With the help of de Branges' theory, we discuss the de Branges space of such an operator and the norm endowed. On the other hand, the Paley-Wiener theorem gives us a chance to recover a Dirac operator from a function that plays the same role as the spectral measure, which is well-known as the Gelfand-Levitan condition.
\end{abstract}
\vspace{1cm}
 \textbf{Acknowledgment:} I appreciate Prof. Christian Remling for his support and for providing extremely useful information.
\section{Introduction}
\par
We investigate generalized Dirac operators and some classical Dirac operators on the half line. Even though this question is analogous to that about Sturm-Louisville operators (\cite{17}) and that about one-dimensional Schr\"odinger operators (\cite{16}), there is an essential difference. In Chapter 12 of \cite{3},
they come up with two conditions to characterize the spectral measure of a Dirac operator with a continuous matrix coefficient; however, their method cannot be used directly if the coefficient is not continuous. In this paper, we take advantage of de Branges theory (\cite{4},\cite{5},\cite{6},\cite{7}, and \cite{8} for more compatible details) to deduce a G-L function (see Section 5) from a Dirac operator, and reconstruct explicitly to some degree a Dirac operator from a G-L function. In this paper, we also rely on the relation between Dirac operators and canonical systems, which will be introduced in Section 2.
\par 
A Dirac equation (or system) is a differential equation of the form
\begin{equation}
   Jf'-\mu f=g 
\end{equation}
where $\mu=\begin{pmatrix} 
					\mu_1&\mu_2\\
					\mu_2&-\mu_1 \\
\end{pmatrix}
$ is a $2\times2$ measure on the Borel sets of $[0,\infty)$ satisfying $|\mu_i|([0,N])< \infty$ for all $N>0$, and $J=\begin{pmatrix} 
					0 & -1\\
					1 & 0 \\
\end{pmatrix}$.
\par
Suppose the measure $\mu$ is absolutely continuous with respect to the Lebesgue measure. In that case, we get a classical Dirac differential equation, and it is easy and natural to define an operator in convention, see \cite{9} for instance. If $\mu$ is a \textquotedblleft real\textquotedblright measure, we first have to ask about the existence and uniqueness of the solution of such a differential equation, which should be trivial in the classical scenario. One straightforward counterexample may be $\mu= \begin{pmatrix} 
					0 &\delta \\
					\delta &0 \\
\end{pmatrix}$ where \(\delta\) is Dirac measure at some point \(x_0\) if we follow the convention. We will introduce a compatible interpretation to explain this differential equation as an integral equation, and define a Dirac operator in Section 2.
\par
A canonical system is defined as follows:
\begin{equation}
u'(x)=zJH(x)u(x)
\end{equation}
on an open interval $x\in(a,b)$, $-\infty\leq a< b\leq\infty$, where $z$ is a complex number, and $H$ satisfies: (1) $H\in \mathbb{R}^{2\times 2}$, \ (2) $H\in L_{loc}^1(a,b)$, \ (3) $H$ is Hermitian and non-negative definite for (Lebesgue) almost all $x\in (a,b)$.
\par
The proper Hilbert space when talking about a canonical system is not $L^2(a,b)$ anymore, but a space called $L^2_H(a,b)$ instead:
\par
Let's define 
\[\mathcal{L}=\{f:(a,b)\to \mathbb{C}^2: f \textit{(Borel) measurable,}\ \int_a^bf^*Hf<\infty\}\]
with (pre)norm $||f||=(\int_a^bf^*Hf)^\frac{1}{2}$.
\par
The Hilbert space is defined as the quotient
\[L^2_H(a,b):=\mathcal{L}/\mathcal{N}\]
where $\mathcal{N}=\{f\in \mathcal{L}:||f||=0\}$.
\par
A canonical system has numerous elegant conclusions, and the most relevant one here is that there is a bijection between canonical systems and (generalized) Dirac systems. With the help of this connection, it is possible to characterize a nice Dirac operator by an $L^1$ function. 
\par
This paper is organized in the following way:
\begin{itemize}
    \item In Section 2, we introduce generalized Dirac operators and go over some basic properties. The connection between canonical systems and (generalized) Dirac systems can be found there as well. More details can be found in \cite{12}.
    \item In Section 3, we introduce de Branges theory and use this theory to show that the spectral measure of a classical Dirac operator on an arbitrary interval $[0,N]$ can be realized by a function $f\in \Phi_N$ (see Definition \ref{d3.1}); more generally, even the operator is not classical, we may still generate its de Branges space as a Paley-Wiener space, this is our \textbf{main result (Thoerem \ref{3.4})}. To take advantage of this result, one may consider a half-line problem by taking $N\to \infty$, and weak convergence of spectral measures (uniformly compact convergence of Weyl functions) will give us an analogous conclusion.
    \item In Section 4, we reconstruct a Dirac operator on $[0,N]$ if a function $f\in \Phi_N$ is given.  Another \textbf{main result (Theorem \ref{t4.3})} emphasizes this recovery in the form of canonical systems. Once again, it is not hard to generalize this result to a half-line situation. Moreover, we also consider a special situation $f\in \Phi_N\cap L^2$, and we \textbf{conclude (Theorem \ref{t4.4})} that in this case recovered Dirac operator is classical, namely, the measure $\mu$ in the equation (1) is replaced by an $L^1$ function. Based on the theorems mentioned above, we discuss the inverse spectral theory in Gelfand-Levitan style (\textbf{Theorem \ref{t5.1}}). 
\end{itemize}

\vspace{2cm}
\section{Dirac Operators and Canonical Systems}
\par In this section, we review some fundamental material on Dirac systems and canonical systems. Most topics about canonical systems can be found in \cite{1},  Dirac systems in \cite{12}, and another modification for Dirac systems in Section 2 of \cite{10}.
\par
Let N be a positive number. We call a function $f \in L^1[0,N]$ from $\mathbb{R}$ to $\mathbb{C}$ of bounded variation (on $[0,N]$), if the total variation of $f$, defined by 
\[V_0^N(f):=\sup\{\int_0^N|f(t)|\phi'(t)dt:\phi\in C_c^1([0,N],\mathbb{R}),||\phi||_{L^\infty }\leq1\}\]
is finite. And we use the notation $BV[0,N]$ to represent the collection of all bounded variation functions on $[0,N]$, i.e.,
\[BV[0,N]:=\{f \in L^1[0,N]:V_0^N(f)<\infty\}\]
This definition is equivalent to that the real part and the imaginary part of $f$ are of bounded variation in the sense of real functions.
\par
If $f=\begin{pmatrix}
f_1\\
f_2\\
\end{pmatrix}$ is a function from $\mathbb{R}$ to $\mathbb{C}\times\mathbb{C}$, we call $f$ is of bounded variation (on $[0,N]$) if $f_1$ and $f_2$ are in $BV[0,N]$, and we also use the same notation $BV[0,N]$, i.e., 
\[BV[0,N]:=\{f=\begin{pmatrix}
f_1\\
f_2\\
\end{pmatrix} \in L^1[0,N]:V_0^N(f_i)<\infty ,i=1,2\}\]
We also define the total variation of $f$ by
\[V_0^N(f):=\max_{i=1,2}(V_0^N(f_i))\]
When considering the half-line problem, it is natural to define the space of all functions of locally bounded variation by 
\[BV[0,\infty):=\{f=\begin{pmatrix}
f_1\\
f_2\\
\end{pmatrix} \in L^1_{loc}[0,\infty):V_0^N(f)<\infty\ \textit{ for all}\ N>0\}\]
Analogously, we say a matrix is in $BV[0,N(\infty)]$ if all entries are in $BV[0,N(\infty)]$. This definition can be alternated by the classical one as the summation of differences when necessary (Chapter 7 in \cite{11} by Giovanni Leoni).
\par
Let $\mu$ be a $2\times2$ signed Borel measure on $[0,\infty)$ of the form $\begin{pmatrix} 
					\mu_1 & \mu_2\\
					\mu_2 & -\mu_1 \\
\end{pmatrix}$, we define the set of such measures as follows:
\[DS:=\{\mu:(1) \max_{i=1,2}(|\mu_i|([0,N])<\infty\ \textit{for all}\ N>0;\ (2)\ \mu(\{0\})=0\}\]
Condition (2) is just a normalization, and there is no technical difficulty coming to remove it with the help of Lemma \ref{2.1}; however, condition (1) is essential when we apply Jan Persson's results (\cite{2}); moreover, this condition also implies that there are only countably many mass points in any compact subset of $[0,\infty)$, namely, $\mu\{x\}\neq 0$ at those points. We cavalierly shorten our notations as  $\mu\{x\}$ and $ \mu(a,b)$ for their merited meanings and say \enquote{a Dirac stuff $\mu$ in $DS$}.
\par
Assume $\mu\in DS$ and $f\in L^1_{loc}(\mu)$,we interpret integral $\int_{a}^{x}d\mu f$ as follows:
\begin{equation*}
\int_{a}^{x}d\mu f=\left\{\begin{array}{rcl}
\int_{(a,x]}d\mu f&&x\geq a\\
-\int_{(x,a]}d\mu f&&x<a
\end{array}\right.
\end{equation*}
Recall that if $f,h$ are functions of locally bounded variation from $\mathbb{R}$ to $\mathbb{C}$, then integration by parts is given by
\[\int_{[a,b]}f(x+)dh+\int_{[a,b]}dfh(x-)=f(b+)h(b+)-f(a-)h(a-)\]
Here, $d$ means the relevant Lebesgue–Stieltjes measure associated with the right-continuous representation of the function, see formula (21.68) in \cite{13} for example.
\par
To define a generalized Dirac operator, we introduce a function, denoted by $g$, from $\mathbb{C}^{2\times2}$ to $\mathbb{C}^{2\times2}$:
\[g(D):=\sum_{n=1}^{\infty}\frac{D^{n-1}}{n!}\]
\par
Let $\mu\in DS$, define \[Af(x):=Jf(x)-\int_{0}^{x}g(\mu \{s\}J)d\mu (s) f(s)\]
for $f\in BV[0,\infty)$ as its right continuous representation.
\begin{definition}
\label{Dirac operators}
 For $\mu\in DS$, the following operator $T$ is called a (generalized) Dirac operator on $L^2[0,\infty)$:
 \[D(T):=\{f\in L^2[0,\infty):f\in BV[0,\infty)\ \textit{and right continuous,}\]
 \[Af \in AC[0,\infty), (Af)'\in L^2[0,\infty) \}\]
\[Tf:=-(Af)'\]
 \end{definition}
 \begin{remark}
 As expected, we say $f\in AC[0,\infty)$ if $f\in AC[0,b]$ for all $b>0$, i.e, $f$ is locally absolutely continuous. Also, if the measure $\mu$ is continuous, then $g(\mu \{s\}J)$ is the identity, so we go back to the conventional definition.  
 \end{remark}
 \par
 Let's take a closer look at this definition. let $k\in L^2[0,\infty)$, then the equation $Tf=k$ is equivalent to 
 \begin{equation}
 Jf(x)-\int_{0}^{x}g(\mu \{s\}J)d\mu f=C-\int_0^xkdt
\end{equation}
The existence and uniqueness of the solution of this integral equation are the keys to making $T$ an operator. The following result belongs to Jan Persson, but we have modified his results for convenience:
\begin{theorem}[\cite{2}]
 Let $A$ be a $2\times2$ complex Borel measure on the real line and $I$ the identity matrix. Let $k$ be a $2\times 1$ complex Borel measure on the real line. If $A\{x\}+I$ is invertible for all $x\in\mathbb{R}$, then to each choice of $ C\in\mathbb{C}^2$, there is a unique solution $f$, which is of locally bounded variation and right continuous, of 
\[f(x)=C-\int_0^xdA(t)f+\int_0^xdk(t),\ x\geq 0\]
and
\[f(x)=C+\int_0^xdA(t)f-\int_0^xdk(t),\ x< 0\]   
\end{theorem}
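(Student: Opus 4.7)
The plan is to prove existence and uniqueness by a Picard iteration adapted to the presence of atoms, combined with an explicit jump rule at each atom that uses the invertibility hypothesis in an essential way. By symmetry it suffices to treat $x\ge 0$, and it is enough to solve the equation on each compact interval $[0,N]$ since one can then extend the solution to all of $[0,\infty)$ by iterating the construction.

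The first step is to read off the jump condition. Subtracting the integral equation evaluated at $x$ and at $x-$ forces any right-continuous solution to satisfy
\[(I+A\{x\})\,f(x)=f(x-)+k\{x\}\]
at every point $x$. By the hypothesis, $I+A\{x\}$ is invertible, so this determines $f(x)$ uniquely from $f(x-)$ via $f(x)=(I+A\{x\})^{-1}(f(x-)+k\{x\})$. This both yields uniqueness at the atoms and provides the rule for continuing the solution across them.

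Between consecutive atoms of $A$, only the continuous parts $A_c$ and $k_c$ drive the equation, and a standard Picard/contraction argument applies. For any starting point $y$ and initial value $f_0$, the operator
\[(\mathcal{T}f)(x)=f_0-\int_{y}^{x}dA_c(t)\,f+\int_{y}^{x}dk_c(t)\]
acts on right-continuous functions of bounded variation equipped with the sup norm and satisfies $\|\mathcal{T}f_1-\mathcal{T}f_2\|_\infty\le |A_c|([y,x])\,\|f_1-f_2\|_\infty$. Since $|A_c|$ is a continuous measure of finite total variation on $[0,N]$, the interval can be partitioned into finitely many pieces on which $|A_c|<1/2$, and on each such piece the contraction mapping theorem supplies a unique right-continuous $BV$ solution that matches the prescribed initial value.

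The remaining task is to paste these pieces together through the (at most countably many) atoms $x_1,x_2,\ldots$ of $A$ in $[0,N]$, imposing the jump formula at each $x_j$ and solving the continuous equation on the gaps. The main obstacle is to show that the resulting concatenation is still of locally bounded variation, since atoms could accumulate. This is controlled as follows: $|A|([0,N])$ and $|k|([0,N])$ are finite, hence $\|A\{x_j\}\|\to 0$ and $\|k\{x_j\}\|\to 0$, so for all but finitely many $j$ the operator $I+A\{x_j\}$ is inverted with norm at most $2$; the total variation contributed by the jumps is then bounded by a constant multiple of $\sum_j\|k\{x_j\}\|+\sum_j\|A\{x_j\}\|\cdot\|f\|_\infty$, which is finite once $\|f\|_\infty$ on $[0,N]$ has been bounded by a Gronwall-type estimate applied to the continuous pieces together with the jump factors. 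Uniqueness propagates from the uniqueness on each piece and the uniqueness of the jump at each atom, and letting $N\to\infty$ produces the solution on the whole half-line.
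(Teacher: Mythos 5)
The paper itself does not supply a proof of this statement: it is quoted from Persson's article \cite{2}, with cosmetic modifications. There is therefore no internal argument to compare against, and I am reviewing your argument on its own terms.

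Your jump condition $(I+A\{x\})f(x)=f(x-)+k\{x\}$ is correct (with $\int_0^x$ read as $\int_{(0,x]}$ and $f$ taken in its right-continuous representative), and the invertibility hypothesis enters exactly there. The gap is in the decomposition and the pasting. You split $A$ into $A_c$ plus atoms, solve with $A_c$ alone ``between consecutive atoms,'' and jump across each atom. But the atoms of a complex Borel measure need not be isolated: they can be dense in a subinterval (take $\sum_n 2^{-n}\delta_{q_n}$ over an enumeration $\{q_n\}$ of the rationals in $[0,N]$). Then there are no gaps on which $A_c$ alone acts, there is no ordering $x_1<x_2<\cdots$ of the jump set compatible with the line, and the concatenation cannot even begin; the order type of the atoms can be an arbitrary countable ordinal, so your pasting would require transfinite recursion. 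You flag the accumulation problem, but the remedy you offer — an a priori Gronwall/total-variation bound on $f$ — is an estimate that presupposes a solution, not a construction of one.

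The clean repair, in the spirit of Persson's argument, is not to split off the atomic part at all. On any interval $(a,b]$ with $|A|((a,b])<1$, the map $\mathcal{T}f(x)=f_0-\int_{(a,x]}dA\,f+\int_{(a,x]}dk$ is a contraction on the Banach space of bounded right-continuous $\mathbb{C}^2$-valued functions on $(a,b]$ in the sup norm, with $\|\mathcal{T}f_1-\mathcal{T}f_2\|_\infty\le|A|((a,b])\,\|f_1-f_2\|_\infty$, atoms included; the fixed point is automatically of bounded variation (read off the equation), right-continuous, and satisfies the jump relation at every atom in $(a,b]$, where $\|A\{x\}\|<1$ makes $I+A\{x\}$ invertible without invoking the hypothesis. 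Only the finitely many atoms in $[0,N]$ with, say, $\|A\{x\}\|\ge\tfrac12$ need to be singled out; at those, the explicit formula $f(x)=(I+A\{x\})^{-1}(f(x-)+k\{x\})$ bridges adjacent pieces, and gluing across them is a finite induction. This yields existence, uniqueness, right-continuity, and the $BV$ bound simultaneously, with no accumulation issue to contend with.
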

\par
Notice that if we pick up $dA=Jg(\mu \{s\}J)d\mu$, then  $A\{x\}+I=e^{J\mu \{s\}}$, which is invertible, and thus we know Definition \ref{Dirac operators} makes sense.
\par
An analogous one, $T_N$ on the interval $[0,N]$, can be defined in the same way:
\begin{definition}
 \[D(T_N):=\{f\in L^2[0,N]:f\in BV[0,N]\ \textit{and right continuous,} \]
\[Af \in AC[0,N], (Af)'\in L^2[0,N] \}\]
\[T_Nf:=-(Af)' \]   
\end{definition}
\begin{lemma}
\label{2.1}
Let $f\in D(T_N)(D(T))$, then $f(x-)=e^{J\mu\{x\}}f(x)$. As consequences, $f$ is continuous at $\mu\{x\}=0$ and $f^*(x-)=f^*(x)e^{-\mu\{x\}J}$.    
\end{lemma}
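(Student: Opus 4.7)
The plan is to extract the jump relation at $x$ directly from the defining identity $(Af)(x) = Jf(x) - \int_{0}^{x} g(\mu\{s\}J)\,d\mu(s)\,f(s)$ by using absolute continuity of $Af$ to equate its left- and right-limits at $x$.

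First I would take left limits in the definition of $Af$. Since $Af \in AC$, it is in particular continuous, so $(Af)(x-) = (Af)(x)$. On the other hand, with our convention that Lebesgue--Stieltjes integrals $\int_{0}^{x}$ use the right-continuous representative (and so include the mass at $x$), the integral term contributes a jump equal to $g(\mu\{x\}J)\,\mu\{x\}\,f(x)$ passing from $x-$ to $x$. Combining with $J f(x) - J f(x-)$ from the first term, continuity of $Af$ yields
\[
J\bigl(f(x)-f(x-)\bigr) \;=\; g(\mu\{x\}J)\,\mu\{x\}\,f(x).
\]
Multiplying by $-J$ and rearranging gives $f(x-) = \bigl(I + J\,g(\mu\{x\}J)\,\mu\{x\}\bigr)f(x)$, so the claim reduces to the purely algebraic identity
\[
I + J\,g(\mu\{x\}J)\,\mu\{x\} \;=\; e^{J\mu\{x\}}.
\]

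The main (and essentially only non-bookkeeping) step is verifying this identity. Because $\mu\{x\}$ has the special form $\bigl(\begin{smallmatrix}\mu_1 & \mu_2\\ \mu_2 & -\mu_1\end{smallmatrix}\bigr)$, a direct calculation shows $J\mu\{x\} = -\mu\{x\}J$, and this anticommutation lets one telescope the product: writing $M := J\mu\{x\}$, one checks by induction that $J(\mu\{x\}J)^{n-1}\mu\{x\} = M^n$ for every $n\ge 1$ (each $J$ on the left can be pushed past an adjacent $\mu\{x\}$ at the cost of a sign, but the alternation with the remaining $J$'s converts the string into $(J\mu\{x\})^n$). Summing against $1/n!$ and using $g(D) = \sum_{n\ge 1} D^{n-1}/n!$ gives
\[
J\,g(\mu\{x\}J)\,\mu\{x\} \;=\; \sum_{n=1}^{\infty} \frac{M^n}{n!} \;=\; e^{J\mu\{x\}} - I,
\]
which is exactly the desired identity.

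The two consequences are then immediate. If $\mu\{x\}=0$ then $e^{J\mu\{x\}}=I$, so $f(x-)=f(x)$ and $f$ is continuous at $x$. For the adjoint version, take conjugate transposes in $f(x-)=e^{J\mu\{x\}}f(x)$; since $\mu\{x\}$ is real symmetric and $J^{*}=-J$, one has $(J\mu\{x\})^{*} = \mu\{x\}(-J) = -\mu\{x\}J$, and hence $f^{*}(x-) = f^{*}(x)\,e^{-\mu\{x\}J}$. I expect the delicate point to be only the combinatorial identity $J(\mu\{x\}J)^{n-1}\mu\{x\} = (J\mu\{x\})^{n}$; once that is in hand the rest is bookkeeping with the right-continuous representative.
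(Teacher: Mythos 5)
Your proof is correct and follows the same route as the paper's: use continuity of $Af$ at $x$ to isolate the jump of the integral term and then verify the algebraic identity $I + J\,g(\mu\{x\}J)\,\mu\{x\} = e^{J\mu\{x\}}$ by resumming the series defining $g$. One small remark: the anticommutation $J\mu\{x\} = -\mu\{x\}J$ you invoke is not actually needed for the key step, since $J(\mu\{x\}J)^{n-1}\mu\{x\} = (J\mu\{x\})^{n}$ is a direct regrouping by associativity with no sign bookkeeping; also note that your intermediate line $J\bigl(f(x)-f(x-)\bigr)=g(\mu\{x\}J)\mu\{x\}f(x)$ is the sign-corrected version of the displayed equation in the paper's proof, and your version is the right one.
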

\begin{proof}
 As $Af \in AC[0,N]$, we have $\lim\limits_{y \to x^-}Af(y)=Af(x)$, then it follows that $Jf(x-)=Jf(x)+g(\mu \{x\}J)\mu\{x\}f(x)=Je^{J\mu\{x\}}f(x)$.   
\end{proof}
\par
We introduce a useful tool called the transfer matrix of the operator $T_N(T)$. Basically speaking, the transfer matrix $T(x)$ is a $2\times 2$ matrix-valued solution of \[JT(x)-\int_{0}^{x}g(\mu \{s\}J)d\mu T=I\]
where $I$ is the identity.
\par
Obviously, $T(0)=I$. we sometimes need to write  down $T$ explicitly as $T(x)=(u(x),v(x))=\begin{pmatrix} 
					u_1 & v_1\\
					u_2 & v_2 \\
\end{pmatrix}$, where $u$ and $v$ are vector-valued solutions of the integral equation satisfying $u(0)=\begin{pmatrix} 
					1\\
					0 \\
\end{pmatrix}$ and $v(0)=\begin{pmatrix} 
					0\\
					1\\
\end{pmatrix}$ respectively. Of course, $u$ and $v$ are right continuous and of locally bounded variation.
\par
The operators have all the desired properties, for instance,
\begin{lemma}
$det(T(x))=1$ for $x\in[0,\infty)$.    
\end{lemma}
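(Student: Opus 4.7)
The strategy is to mimic the classical Liouville (or Jacobi) formula, separating the effects of the continuous part and the pure point part of $\mu$. First I would write $T(x) = (u(x), v(x))$ so that $\det T(x) = u_1(x) v_2(x) - u_2(x) v_1(x)$ is a right-continuous scalar BV function with $\det T(0) = 1$, and record the underlying arithmetic fact that $\operatorname{tr}(J \mu\{x\}) = 0$; indeed, with $\mu$ symmetric and of the given trace-free form, a direct computation shows $J \mu$ is off-diagonal.

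For the jumps, I would apply Lemma \ref{2.1} column-wise to get $T(x-) = e^{J \mu\{x\}} T(x)$, whence $\det T(x-) = \det(e^{J \mu\{x\}}) \det T(x) = e^{\operatorname{tr}(J \mu\{x\})} \det T(x) = \det T(x)$. So the determinant has no jumps.

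For the continuous part of $\mu$, off atoms $g(\mu\{s\} J) = I$, so the integral equation reduces to $d(JT) = d\mu_c \cdot T$, i.e. $dT = -J \, d\mu_c \, T$ in the measure sense. Applying the Leibniz rule to $\det T = u_1 v_2 - u_2 v_1$ (equivalently, Jacobi's formula) produces $d(\det T) = \operatorname{tr}(-J \, d\mu_c)\, \det T = 0$, once again because $J \mu_c$ is trace-free. Combining the two steps, $\det T$ is constant on $[0,\infty)$ and equal to its initial value $1$.

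The main obstacle I anticipate is making the product-rule step clean when $\mu$ has a non-trivial singular continuous component; the cleanest workaround is to first establish the symplectic identity $T(x)^\top J T(x) = J$ and then take determinants. Its jump-invariance is immediate from the Hamiltonian relation $(J\mu)^\top J + J(J\mu) = 0$, which yields $(e^{J \mu\{x\}})^\top J \, e^{J\mu\{x\}} = J$; the continuous-part invariance is the same trace computation as above. Then $(\det T(x))^2 = 1$, and $\det T(0) = 1$ together with the already-established absence of jumps in $\det T$ forces $\det T \equiv 1$.
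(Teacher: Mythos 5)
Your strategy is correct and yields a valid proof. You split $d(\det T)$ into a jump part (handled by Lemma \ref{2.1}: $T(x-) = e^{J\mu\{x\}}T(x)$, and $\det e^{J\mu\{x\}} = e^{\operatorname{tr}(J\mu\{x\})} = 1$) and a continuous part (handled by the Leibniz rule on $\det T = u_1v_2 - u_2v_1$, which one can verify vanishes identically once $du = -J\,d\mu_c u$, $dv = -J\,d\mu_c v$ are substituted — no invertibility of $T$ is needed, so Jacobi's formula is dispensable). Combined with $\det T(0) = 1$ and the BV decomposition into jump and continuous parts, this gives $\det T \equiv 1$. Your symplectic backup, establishing $T^\top J T = J$ from $(J\mu)^\top J + J(J\mu) = 0$, is likewise correct, is conceptually cleaner (no determinant expansion, works in any dimension), and delivers the stronger fact that $T$ is symplectic.

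One slip of justification you should fix: $J\mu$ is \emph{not} off-diagonal. With $\mu = \begin{pmatrix} \mu_1 & \mu_2 \\ \mu_2 & -\mu_1 \end{pmatrix}$ and $J = \begin{pmatrix} 0 & -1 \\ 1 & 0 \end{pmatrix}$, one computes $J\mu = \begin{pmatrix} -\mu_2 & \mu_1 \\ \mu_1 & \mu_2 \end{pmatrix}$, whose diagonal entries are $-\mu_2$ and $\mu_2$. The correct (and sufficient) statement is simply $\operatorname{tr}(J\mu) = 0$; equivalently, $J\mu$ is a symmetric matrix in $\mathfrak{sl}_2(\mathbb{R})$. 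This does not affect the argument, which uses only the trace-free property.

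For comparison with the paper: the paper does not present its own proof of this lemma. It defers to Claims 3.2 and 3.3 in the thesis \cite{12} and states that the proofs ``rely on the other two results in \cite{2} due to J.\ Persson and an observation that $T^{-1}(x)f(x)$ is continuous.'' So the paper's route apparently leans on Persson's further structure theorems for linear measure differential equations, while yours is a self-contained Liouville/Wronskian argument — a genuinely different and arguably more transparent approach. Note also that applying Lemma \ref{2.1} to the columns $u,v$ is legitimate even though $u,v \notin D(T_N)$: the proof of Lemma \ref{2.1} uses only the continuity of $Af$, and $Au$, $Av$ are constant.
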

\begin{lemma}[Variation of Constants]
 Let $f\in D(T_N)(D(T))$ and $T_Nf=k(Tf=k)$. Assume $f(0)=C$, then
\begin{equation}
    f(x)=T(x)C+T(x)\int_0^xT^{-1}Jkdt
\end{equation}   
\end{lemma}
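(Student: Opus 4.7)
My plan is to show that the right-hand side of~(3), call it $\tilde{f}$, satisfies the same integral equation as $f$ with the same initial value, and then to invoke the uniqueness part of Persson's theorem. I would start by writing $\tilde{f} = T\phi$ with $\phi(x) := C + \int_0^x T^{-1}(s)Jk(s)\,ds$. Since $T$ is right-continuous and of locally bounded variation with $\det T \equiv 1$, its inverse is locally bounded, so $T^{-1}Jk \in L^1_{\mathrm{loc}}$, and $\phi$ is absolutely continuous with $\phi' = T^{-1}Jk$ and $\phi(0)=C$; in particular $\tilde{f}(0)=C$ and $\tilde{f}$ is right-continuous and locally of bounded variation.

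The heart of the argument is to apply the Lebesgue--Stieltjes integration-by-parts formula recorded in Section~2 to the product $T\phi$. Because $\phi$ is continuous, $\phi(s-)=\phi(s)$ everywhere, and in the right-continuous representation of $T$ one has $T(s+)=T(s)$, so I would obtain
\[
T(x)\phi(x) - C \;=\; \int_0^x T(s)\,d\phi(s) \;+\; \int_0^x dT(s)\,\phi(s).
\]
The first integral collapses via $TT^{-1}=I$ to $J\int_0^x k\,dt$. For the second, the defining equation of the transfer matrix identifies the matrix-valued measure $dT(s) = -Jg(\mu\{s\}J)T(s)\,d\mu(s)$, so this integral becomes $-J\int_0^x g(\mu\{s\}J)\,d\mu(s)\,\tilde{f}(s)$.

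Multiplying through by $J$ and using $J^2=-I$, I would arrive at
\[
J\tilde{f}(x) - \int_0^x g(\mu\{s\}J)\,d\mu\,\tilde{f} \;=\; JC - \int_0^x k(t)\,dt,
\]
which is exactly the integral reformulation of $T_N\tilde{f}=k$ with $\tilde{f}(0)=C$. Taking $dA := Jg(\mu\{s\}J)\,d\mu$ one has $A\{x\}+I = e^{J\mu\{x\}}$, which is invertible, so Persson's theorem applies and yields $\tilde{f} = f$, proving~(3). The argument for $T$ on $[0,\infty)$ is identical.

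The step I expect to be the main obstacle is the integration by parts itself: one must be consistent about one-sided limits for the matrix-valued BV function $T$, rely on continuity of $\phi$ so that no atomic contribution is lost, and correctly read off $dT$ as a matrix-valued Lebesgue--Stieltjes measure from the integral equation that defines $T$. Once that bookkeeping is in place, the remainder is algebra using $J^2=-I$ and $\det T\equiv 1$, together with the uniqueness statement already available from Persson.
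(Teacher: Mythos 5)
Your proof is correct. The paper itself does not reproduce the argument (it defers to Claims 3.2--3.3 of~\cite{12} and only hints that the key observation is the continuity of $T^{-1}(x)f(x)$), but the strategy you take is in substance the same variation-of-constants argument: the continuity of $T^{-1}f$, which the paper flags as the crucial point, is exactly what you exploit when you build $\tilde f = T\phi$ with $\phi$ absolutely continuous, so that $\phi(s-)=\phi(s)$ kills all atomic contributions in the integration by parts. You run the argument in the ``forward'' direction --- construct the candidate, verify it satisfies the integral reformulation~(3) with constant $JC$, and invoke the uniqueness part of Persson's theorem --- rather than starting from the given solution $f$ and showing $T^{-1}f$ is continuous; the two directions are equivalent here, and yours has the minor advantage of needing only Theorem~2.1 rather than additional results from~\cite{2}. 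The only place to be slightly more careful than you indicate is that the integration-by-parts formula recorded in Section~2 is stated for scalar functions, and one must invoke the matrix--vector analogue $d(T\phi)=T(s+)\,d\phi + dT\,\phi(s-)$ with the factor order preserved; this is standard and does not affect the conclusion.
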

The proofs are not trivial and rely on the other two results in \cite{2} due to J.Persson and an observation that $T^{-1}(x)f(x)$ is continuous due to Lemma \ref{2.1}, we don't want to repeat those two proofs here, for reader who are interested in them, please see Claim 3.2, Claim 3.3 in \cite{12}.
\par
The next conclusion states that $T$ and $T_N$ are densely defined.
\begin{lemma}
 $\overline{D(T_N)}=L^2[0,N]$, $\overline{D(T)}=L^2[0,\infty)$.   
\end{lemma}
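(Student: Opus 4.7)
The plan is to exhibit a dense subspace inside $D(T_N)$ via the transfer matrix as a change of variables. Set
\[
\mathcal{D}_N := \{T\phi : \phi \in C_c^\infty((0,N);\mathbb{C}^2)\},
\]
and analogously $\mathcal{D}_\infty$ on the half line. I will argue $\mathcal{D}_N \subseteq D(T_N)$ and that $\mathcal{D}_N$ is dense in $L^2[0,N]$; the half-line case is parallel. The reason this is the natural candidate is that for $\phi$ continuous one has $(T\phi)(x-) = T(x-)\phi(x) = e^{J\mu\{x\}}T(x)\phi(x) = e^{J\mu\{x\}}(T\phi)(x)$ by Lemma~\ref{2.1}, so the jump condition forced on any element of $D(T_N)$ is absorbed automatically into the transfer matrix.

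First I would verify $\mathcal{D}_N \subseteq D(T_N)$. Writing $F(x) := JT(x)$, the defining integral equation of the transfer matrix reads $dF(s) = g(\mu\{s\}J)\,d\mu(s)\,T(s)$ in the Lebesgue--Stieltjes sense. Applying the integration-by-parts formula recorded in Section 2 and using $\phi(0) = 0$, one simplifies
\[
A(T\phi)(x) = F(x)\phi(x) - \int_0^x dF(s)\,\phi(s) = \int_0^x JT(s-)\phi'(s)\,ds.
\]
This is absolutely continuous on $[0,N]$ with derivative $JT(s)\phi'(s)$ almost everywhere, since $T(s-)$ and $T(s)$ disagree on at most a countable set. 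Because $\phi'$ has compact support in $(0,N)$ and $T$ is bounded on $[0,N]$ (it is right continuous and of bounded variation there), both $T\phi$ and $(A(T\phi))'$ lie in $L^2[0,N]$; meanwhile $T\phi$ is right continuous and of bounded variation since $T$ is and $\phi$ is smooth. Thus $T\phi \in D(T_N)$.

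For density, suppose $h \in L^2[0,N]$ is orthogonal to $\mathcal{D}_N$, i.e.\
\[
\int_0^N h(x)^* T(x)\phi(x)\,dx = 0 \quad \text{for every } \phi \in C_c^\infty((0,N);\mathbb{C}^2).
\]
The fundamental lemma of the calculus of variations, applied componentwise, forces $T(x)^* h(x) = 0$ for Lebesgue-almost every $x$. Since $\det T(x) = 1$ everywhere, $T(x)^*$ is invertible, and so $h = 0$ in $L^2[0,N]$, giving $\overline{D(T_N)} = L^2[0,N]$. The half-line statement follows identically: for $\phi \in C_c^\infty((0,\infty);\mathbb{C}^2)$ supported in some $(0,M)$, $T\phi$ has compact support and lies in $L^2[0,\infty) \cap D(T)$ by the same local calculation on $[0,M]$, and orthogonality again forces $h = 0$ by the invertibility of $T^*$.

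The main obstacle is the careful bookkeeping in the first step: one must apply the Stieltjes integration-by-parts formula of Section 2 with its right-continuous/left-continuous conventions and check that the boundary term at $0$ vanishes. This is where the normalization $\mu\{0\} = 0$ from the definition of $DS$ (combined with $\phi(0) = 0$) is used. Once this computation is in hand, the transfer-matrix change of variables absorbs all of the nontrivial jump structure required by Lemma~\ref{2.1}, and the remainder of the density argument reduces to a clean $L^2$ orthogonality argument.
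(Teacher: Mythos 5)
Your proposal is correct, and it centers on the same essential idea as the paper's proof: the transfer matrix is the natural change of variables, so that $\{T\phi : \phi\in C_c^\infty\}$ is a dense subset of the domain. Both arguments turn on exactly this subspace. The realization, however, differs in both steps. For membership, you verify $T\phi\in D(T_N)$ directly via the Stieltjes integration-by-parts formula recorded in Section~2, yielding the clean identity $A(T\phi)(x)=\int_0^x JT(s-)\phi'(s)\,ds$; the paper instead routes this through Persson's existence theorem and the variation-of-constants lemma (defining $k_n=-JTC_n'$ and matching the resulting integral equation). Your computation is more self-contained, sidestepping the auxiliary lemmas entirely, though it does rely on applying the noncommutative product rule correctly; the key point (which you note) is that $\phi(0)=0$ together with the normalization $\mu\{0\}=0$ kills the boundary term at $0$. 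For density, you argue dually: orthogonality to $\{T\phi\}$ forces $T^*h=0$ a.e.\ by the du Bois--Reymond lemma, hence $h=0$ since $\det T\equiv 1$. The paper instead directly constructs an approximating sequence $C_n\to T^{-1}f$ and pushes it forward by $T$. These are equivalent in Hilbert space, and each is standard. One small remark: in the derivative computation the a.e.\ derivative is literally $JT(x-)\phi'(x)$, which agrees with $JT(x)\phi'(x)$ only because $T(x-)\neq T(x)$ on a countable, hence Lebesgue-null, set; you say this, but it is the one place where the bounded-variation structure is doing real work and deserves the emphasis you give it.
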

\begin{proof}
Let $f\in L^2[0,N]$, we have $T^{-1}f\in L^2[0,N]$ as $T^{-1}$ is bounded on $[0,N]$ under the supremum norm. We can pick up a sequence $\{C_n\}_{n=1}^\infty\subset C_0^\infty(0,N)$ such that $C_n\stackrel{L^2}{\longrightarrow}T^{-1}f$, then it follows that $TC_n\stackrel{L^2}{\longrightarrow}f$.
\par
We define $k_n(x):=-JT(x)C_n'(x)$, then $k_n\in L^2[0,N]$ since $C_n'$ is also bounded. Moreover, $C_n(x)=\int_0^xT^{-1}Jk_ndt$. Now, we consider \[Jf_n(x)-\int_{0}^{x}g(\mu \{s\}J)d\mu f_n=-\int_0^xk_ndt\]
Theorem \ref{2.1} shows that $f_n\in D(T_N)$, and (4) shows \[f_n(x)=T(x)\int_0^xT^{-1}Jk_ndt\] i.e., $f_n(x)=T(x)C_n(x)$.
It follows from $f_n\stackrel{L^2}{\longrightarrow}f$ that $\overline{D(T_N)}=L^2[0,N]$.
\par
To show $\overline{D(T)}=L^2[0,\infty)$, we pick up $f\in L^2[0,\infty)$. Since $\chi_{[0,N]}f\stackrel{L^2}{\longrightarrow}f$, we can pick up $\{C_n\}_{n=1}^\infty\subset C_0^\infty(0,N)$ such that $f_n=TC_n\stackrel{L^2}{\longrightarrow}\chi_{[0,N]}f$. Observe that $f_n(N-)=0$, then Lemma \ref{2.1} shows $f_n(N)=0$, hence the function \[\widetilde{f_n}=\left\{\begin{array}{rcl}
f_n(x)&&{x\leq N}\\
0&&{x>N}
\end{array}\right.\] is in $D(T)$. it follows from this approximation that $\overline{D(T)}=L^2[0,\infty)$.
\end{proof}
As a densely defined operator on Hilbert space, it is extremely natural to discuss its spectral theory. This is a tedious but standard process, so we will just briefly state the scheme. For readers who want more details, please see Chapter 4 in \cite{12}, and \cite{1} for a more general treatment.
\par
One can define the Wronskian of two functions $f$ and $h$ by $W_{f,h}(x):=(f^*Jh)(x)$, and this is used in the following famous equation:
\[\langle T_{(N)}f,h\rangle-\langle f,T_{(N)}h\rangle=\lim\limits_{x \to \infty(N)}W_{f,h}(x)-W_{f,h}(0)\]
as the inner product in $L^2$.
By studying the adjoint of $T_N$ and thanks to von Neumann Theory of symmetric relations (see \cite{1} for instance), we can characterize all self-adjoint realizations of $T_N$, specifically, two boundary conditions can be given at 0 and $N$ respectively to get a self-adjoint operator. 
\par
we consider the self-adjoint realization of $T_N$ with separated boundary condition: $f_2(0)=0,e_{\beta}^*Jf(N)=0$ for $e_{\beta}=\begin{pmatrix}
\cos{\beta}\\
\sin{\beta}\\
\end{pmatrix}$ with some $\beta \in [0,\pi)$. The eigenvalue problem $T_Nf=zf$ will give the Weyl-Titchmarsh function 
\begin{equation*}
 m_N^\beta(z):=\frac{f_1(0,z)}{f_2(0,z)}=T^{-1}(N,z)\cot{\beta} 
\end{equation*}
where $f$ is the nontrivial solution of $T_Nf=zf$ satisfying the boundary condition at $N$, $T(x,z)$ is a $2\times 2$ solution of 
\[JT(x,z)-\int_{0}^{x}g(\mu \{s\}J)d\mu T(t,z)=J-z\int_0^xT(t,z)dt\]
And the second equation should be interpreted as the Mobius transform. $T(x,z)$ is the generalization of the transfer matrix and is entire for any fixed $x$ with the help of Arzela-Ascoli theorem. By applying standard Weyl theory, we conclude that if we assume the limit point case at $\infty$ (we will see later that this is the only situation), i.e., there is a unique non-trivial solution $f$ up to a factor (in $BV[0,\infty)$ and right continuous) of $(Af)'=-zf$ which is in $L^2[0,\infty)$ for all $z\in \mathbb{C}^+$ (there is another way to define it, but by the argument of deficiency indices, there is no essential difference), then the Weyl-Titchmarsh function for the half line can be given by
\[m(z)=\frac{f_1(0,z)}{f_2(0,z)}\]
\par
Now, let's turn to canonical systems (2), and we recommend \cite{1} to readers for the vicissitude of this topic.
\par
We denote all canonical systems by $H\in CS$ and a subset of it by
\[CD:=\{H\in CS:(1) H\in BV[0,\infty)\ \textit{and right continuous;}\]\[ (2)detH=1;\ (3)H(0)=1\}\]
If we recall a super elegant result that $\infty$ is in the limit point case if and only if $trH\notin L^1$ near $\infty$ [Theorem 3.5, \cite{1}], we conclude that for $H\in CD$, it is in the limit point case since it follows from $detH=1$ and $H\geq 0$ that $trH\geq 2$. This observation shows that the Weyl-Titchmarsh function of $H$, denoted by $m(z,H)$, can be defined. In Chapter 5 of \cite{12}, we have proved that for such a canonical system, there is $\mu \in DS$ with $m(z,H)$ as its Weyl-Titchmarsh function, and vice versa. In summary, we have the following theorem:
\begin{theorem}
\label{2.2}
 There is a bijection between $DS$ and $CD$ in the sense of sharing the same Weyl-Titchmarsh function.   
\end{theorem}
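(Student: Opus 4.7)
The plan is to use the $z=0$ transfer matrix $T_0(x):=T(x,0)$ as the change of variables that converts a Dirac system into a canonical system, and vice versa. By the results already in Section 2, $T_0$ is real, right continuous, of locally bounded variation, satisfies $T_0(0)=I$ and $\det T_0\equiv 1$, so $T_0(x)\in SL(2,\mathbb{R})$ pointwise. The decisive observation is that $T_0$ inherits from Lemma \ref{2.1} the jump identity $T_0(x-)=e^{J\mu\{x\}}T_0(x)$, identical to that satisfied by any Dirac solution; consequently, for any $f$ solving $Tf=zf$ the transformed function $u(x):=T_0(x)^{-1}f(x)$ is continuous across every atom of $\mu$, since the two jumps cancel exactly.

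For the forward map $\mu\mapsto H_\mu$, set $H_\mu(x):=T_0(x)^\top T_0(x)$. Then $H_\mu$ is real, symmetric, positive definite, has $\det H_\mu\equiv 1$ and $H_\mu(0)=I$; right continuity and local bounded variation transfer from $T_0$, placing $H_\mu\in CD$. A direct calculation using the integral form (3) and the Lebesgue--Stieltjes integration by parts of Section 2 shows that $u=T_0^{-1}f$ satisfies
\[ u(x)=u(0)+z\int_0^x JH_\mu(t)u(t)\,dt, \]
i.e.\ the canonical system with Hamiltonian $H_\mu$. Because $T_0(0)=I$ we have $u(0)=f(0)$, and since $\|f\|^2_{L^2}=\int f^\ast f\,dx=\int u^\ast H_\mu u\,dx=\|u\|^2_{L^2_{H_\mu}}$, the $L^2$ Weyl solution for the Dirac system corresponds to the $L^2_{H_\mu}$ Weyl solution for the canonical system. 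Reading off the ratio of components at $0$ then gives $m(z,\mu)=m(z,H_\mu)$ for every $z\in\mathbb{C}^+$.

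For the reverse direction, given $H\in CD$ one recovers $T_0$ as the unique right continuous, BV, $SL(2,\mathbb{R})$-valued matrix function with $T_0(0)=I$, $T_0^\top T_0=H$, and $dT_0\cdot T_0^{-1}$ taking values in the symmetric matrices. This last requirement selects the orthogonal factor in the polar decomposition $T_0=R\cdot H^{1/2}$ through a compatibility equation for $R\in SO(2)$. The measure $\mu$ is then read off via $d\mu:=J\,dT_0\cdot T_0^{-1}$: atoms come from jumps through $e^{J\mu\{x\}}=T_0(x-)T_0(x)^{-1}$, while the continuous part comes from the continuous part of $T_0$. By construction the two maps are mutual inverses.

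The main obstacle is the rigorous justification of the product-rule computation behind the forward map when $\mu$ carries atoms or a singular continuous part. There one cannot differentiate $T_0^{-1}f$ pointwise, and the cancellation of jumps --- algebraically dependent on $e^{-J\mu\{x\}}e^{J\mu\{x\}}=I$ --- must be carried out via the Stieltjes integration-by-parts identity recorded in Section 2, in concert with Persson's existence and uniqueness theorem for the resulting integral equation. A secondary subtlety lies in the inverse map: $T_0^\top T_0=H$ only determines $T_0$ up to a left $SO(2)$-valued factor depending on $x$, and pinning this factor down requires the symmetry condition on $dT_0\cdot T_0^{-1}$ --- equivalently, the recognition of $T_0$ as the solution of an integral equation driven by a symmetric trace-zero measure.
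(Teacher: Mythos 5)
The paper does not contain a proof of Theorem \ref{2.2}; the theorem is stated and the reader is referred to Chapter 5 of the author's thesis \cite{12} (with the special absolutely continuous case treated in \cite{9}). A direct comparison with the paper's own argument is therefore not possible, but your construction --- passing through the $z=0$ transfer matrix $T_0(x)=T(x,0)$ and setting $H_\mu=T_0^\top T_0$ --- is the standard realization of this correspondence, and the forward map as you describe it is correct. Two comments on how you could tighten it.

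First, the forward direction need not be carried out by hand through Stieltjes integration by parts: the paper's Variation of Constants formula (displayed equation $(4)$, valid in the measure setting by the paper's Lemma built on Persson's theorem) already gives, for an eigenfunction $f$ with $Tf=zf$ and $u:=T_0^{-1}f$,
\[
u(x)=f(0)+z\int_0^x T_0^{-1}JT_0\,u\,dt,
\]
so that $u$ is locally absolutely continuous (not merely continuous) and solves $u'=zJH_\mu u$ with $JH_\mu=T_0^{-1}JT_0$. Using $\det T_0\equiv 1$ one has $JT_0^{-1}J^{-1}=T_0^\top$, which gives exactly $H_\mu=T_0^\top T_0$, and your norm identity $\lVert f\rVert_{L^2}^2=\lVert u\rVert_{L^2_{H_\mu}}^2$ plus $T_0(0)=I$ transfers the Weyl functions. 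Your observation that the jump factor $e^{J\mu\{x\}}$ cancels is the right intuition, but invoking $(4)$ avoids the product-rule gymnastics you flag as the main obstacle.

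Second, the reverse direction as written contains an inaccuracy and is underdeveloped. The correct selection rule for the orthogonal factor in the polar decomposition $T_0=R\,H^{1/2}$ is that the measure $dT_0\cdot T_0^{-1}$ be \emph{symmetric}, and this does pin down $R$ since $\mathrm{antisym}(dT_0\,T_0^{-1})=dR\,R^\top+\mathrm{antisym}\bigl(dP\,P^{-1}\bigr)$ with $P=H^{1/2}$, the second term being independent of $R\in SO(2)$; but the parenthetical claim that one is integrating a symmetric \emph{trace-zero} measure fails at atoms: the jump of $T_0$ contributes $(I-e^{J\mu\{x\}})\delta_x$ to $dT_0\,T_0^{-1}$, and $\operatorname{tr}(I-e^{J\mu\{x\}})=2-e^{t(x)}-e^{-t(x)}\neq 0$ when $\mu\{x\}\neq 0$. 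Trace-zero is automatic only for the diffuse part (where it is forced by $\det T_0\equiv 1$). Your formula $e^{J\mu\{x\}}=T_0(x-)\,T_0(x)^{-1}$ is the right way to extract the atoms, but one must also check that $\sum_x|\mu\{x\}|<\infty$ (it follows from $T_0\in BV$ and the pointwise boundedness of $T_0$) and that the resulting $\mu$ has the required $\begin{pmatrix}\mu_1&\mu_2\\\mu_2&-\mu_1\end{pmatrix}$ structure, which you do not verify. These are filling-in-the-details issues rather than conceptual errors, but as written the inverse construction is a sketch rather than a proof.
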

Moreover, we can say more about this theorem:
\begin{corollary}
 \label{cor2.1}
 Under Theorem \ref{2.2}, the bijection maps the subset of $DS$ with an extra condition that $\mu$ is absolutely continuous w.r.t. the Lebesgue measure on the subset of $CD$ with an extra condition that $H\in AC[0,\infty)$.   \end{corollary}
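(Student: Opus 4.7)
The plan is to make the bijection of Theorem~\ref{2.2} explicit via $H(x) = M(x)^{*} M(x)$, where $M(x) := T(x,0)$ is the transfer matrix at spectral value $z=0$ attached to $\mu$; this $M$ is real, right-continuous, of locally bounded variation, with $M(0)=I$, $\det M\equiv 1$, and satisfies the integral equation $M(x) = I - J\int_0^x g(\mu\{s\}J)\,d\mu(s)\,M(s)$. The substitution $f = Mu$ conjugates the Dirac eigenvalue equation to the canonical system $u' = zJHu$, which is exactly why $H = M^{*}M$ realizes the bijection. Both directions will then be obtained by transporting regularity between $\mu$, $M$, and $H$ through this relationship.

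For the forward direction, if $\mu\ll dx$ then $\mu\{s\}=0$ at every $s$, hence $g(\mu\{s\}J)\equiv g(0)=I$ and the integral equation collapses to $M(x) = I - \int_0^x J\,\mu'(s)\,M(s)\,ds$. Its right-hand side is a Lebesgue integral of an $L^1_{\mathrm{loc}}$ function, so $M\in AC[0,\infty)$; since the entries of $M^{*}M$ are polynomials in those of $M$, we also obtain $H\in AC[0,\infty)$.

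For the backward direction, assume $H\in AC[0,\infty)$; I first rule out atoms of $\mu$. Applied columnwise, Lemma~\ref{2.1} gives $M(x-)=e^{J\mu\{x\}}M(x)$, and so
\[
H(x-) = M(x)^{*}\bigl(e^{J\mu\{x\}}\bigr)^{*} e^{J\mu\{x\}} M(x).
\]
The key algebraic observation is the anti-commutation $\mu\{x\}J = -J\mu\{x\}$, which holds by direct computation from the Dirac form of $\mu$. It yields $(e^{J\mu\{x\}})^{*} = e^{-\mu\{x\}J} = e^{J\mu\{x\}}$, and therefore $H(x-) = M(x)^{*} e^{2J\mu\{x\}} M(x)$. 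Continuity of $H$ and invertibility of $M(x)$ force $e^{2J\mu\{x\}} = I$; but $2J\mu\{x\}$ is real with eigenvalues $\pm 2(\mu_1\{x\}^2+\mu_2\{x\}^2)^{1/2}$, so $\mu\{x\}=0$. Thus $\mu$ has no atoms and $g(\mu\{s\}J)\equiv I$, so formally $dM = -J\,d\mu\,M$.

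With $\mu$ continuous and $M$ continuous of bounded variation, the product rule (the integration-by-parts identity recalled in Section~2) and a second application of $\mu J = -J\mu$ give
\[
dH = (dM)^{*}M + M^{*}\,dM = M^{*}(d\mu\cdot J - J\cdot d\mu)M = -2\,M^{*}J\,d\mu\,M.
\]
Solving yields $d\mu = \tfrac12\,J\,(M^{-1})^{*}\,dH\,M^{-1}$, and since $M^{\pm 1}$ are locally bounded (as $M$ is locally BV with $\det M\equiv 1$), absolute continuity of $dH$ passes to $d\mu$. The main obstacle is the no-atoms step in paragraph three: one must spot that the Dirac form of $\mu$ forces the anti-commutation with $J$ and deploy it inside the matrix exponentials; after that, the passage from no atoms to absolute continuity is a clean measure-theoretic consequence of the identity $dH = -2\,M^{*}J\,d\mu\,M$.
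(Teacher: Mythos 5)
Your proposal is correct and complete. The paper itself does not present a proof of Corollary~\ref{cor2.1}; it defers to reference~\cite{9} for ``a straightforward calculation,'' while noting that the calculation there does not cover general measures. So the real comparison is with that implicit reference, and your argument does match the standard route ($H = M^{*}M$ with $M = T(\cdot,0)$, and the conjugation $f = Mu$ carrying $Jf'-\mu f=-zf$ into $u'=zJHu$ via $M^{*}JM=\det(M)J=J$), but it also supplies the part the paper flags as missing: the backward direction when $\mu$ is a priori only a measure. Your atom-removal step is the right idea and the algebra is sound --- $\mu\{x\}$ has the Dirac form $\begin{pmatrix} a & b \\ b & -a \end{pmatrix}$, so $\mu\{x\}J=-J\mu\{x\}$, $J\mu\{x\}$ is real symmetric with eigenvalues $\pm\sqrt{a^{2}+b^{2}}$, and $e^{2J\mu\{x\}}=I$ forces $\mu\{x\}=0$. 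One small point to make explicit: you invoke Lemma~\ref{2.1} for the columns of $M$, but $M$'s columns need not lie in $D(T)$ (they are generally not square-integrable); what saves this is that the proof of Lemma~\ref{2.1} only uses $Af\in AC$, which holds for the transfer matrix since $AM=J$ is constant. Once $\mu$ has no atoms, $M$ is continuous, the product rule $d(M^{*}M)=(dM)^{*}M+M^{*}dM$ is valid without boundary corrections, and the identity $dH=-2M^{*}J\,d\mu\,M$, inverted as $d\mu=\tfrac12 J(M^{*})^{-1}\,dH\,M^{-1}$, cleanly transfers absolute continuity from $H$ to $\mu$ because $M^{\pm1}$ are continuous with $\det M\equiv1$. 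The forward direction is as you say, modulo the remark that one should first observe $M$ is locally bounded (being right-continuous and locally BV) before concluding the Volterra equation $M=I-\int_{0}^{x}J\mu'(s)M(s)\,ds$ places $M$ in $AC_{\mathrm{loc}}$. Overall this is a correct and self-contained proof of a statement the paper only cites; the only refinement I would suggest is the remark on applicability of Lemma~\ref{2.1} above.
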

 \begin{corollary}
 \label{c2.2}
Under Theorem \ref{2.2}, the bijection maps the subset of $DS$ with an extra condition that $\mu$ is continuous w.r.t. the Lebesgue measure on the subset of $CD$ with an extra condition that $H\in C[0,\infty)$.     
 \end{corollary}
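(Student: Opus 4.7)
The plan is to lean on the explicit construction underlying Theorem \ref{2.2} (carried out in Chapter 5 of \cite{12}), in which the bijection $\mu\leftrightarrow H$ is realised through the transfer matrix $T(\cdot)$ at spectral parameter $z=0$: one builds $H$ from $T$ by a pointwise algebraic formula (essentially $H(x)=T(x)^{*}T(x)$, up to a fixed sign convention), and conversely one recovers $T$, and then $\mu$, from $H$ by the inverse algebraic procedure. The argument parallels the proof of Corollary \ref{cor2.1}, with mere continuity replacing absolute continuity throughout.

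For the forward direction, suppose $\mu\in DS$ has no atoms, so $\mu\{x\}=0$ for every $x$. Applying Lemma \ref{2.1} column by column to $T$ gives $T(x-)=e^{J\mu\{x\}}T(x)=T(x)$, and since $T$ is also right continuous by Persson's theorem, one obtains $T\in C[0,\infty)$. Because $H(x)$ is a matrix product of entries of $T(x)$, $H\in C[0,\infty)$ follows immediately, and $H\in CD$ by Theorem \ref{2.2}.

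For the reverse direction, suppose $H\in CD$ is continuous. The inverse construction produces a transfer matrix $T$ that is continuous in $x$, and Lemma \ref{2.1} then forces $e^{J\mu\{x\}}=T(x-)T(x)^{-1}=I$ at every $x$. A direct computation yields
\[
J\mu\{x\}=\begin{pmatrix}-\mu_{2}\{x\}&\mu_{1}\{x\}\\ \mu_{1}\{x\}&\mu_{2}\{x\}\end{pmatrix},
\]
a symmetric matrix with eigenvalues $\pm\sqrt{\mu_{1}\{x\}^{2}+\mu_{2}\{x\}^{2}}$, so $e^{J\mu\{x\}}=I$ is equivalent to $\mu_{1}\{x\}=\mu_{2}\{x\}=0$, i.e.\ $\mu\{x\}=0$; hence $\mu$ is continuous. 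The main obstacle lies not in either direction separately but in that Theorem \ref{2.2} was quoted existentially in the excerpt: the argument really depends on extracting enough of the transfer-matrix content of the bijection from \cite{12} to see that the algebraic map $T\mapsto H$ and its inverse are continuous matrix operations in $x$. A complete writeup would import that explicit formula from \cite{12} and verify this directly.
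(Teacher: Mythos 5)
The paper does not include a proof of this corollary; it is stated without argument, with only the companion Corollary~\ref{cor2.1} referred to \cite{9} and \cite{12}, so there is no internal proof to compare against and your attempt stands on its own. Your forward direction is sound: the proof of Lemma~\ref{2.1} applies to the columns $u,v$ of the transfer matrix since $Au,Av$ are constant (hence absolutely continuous), giving $T(x-)=e^{J\mu\{x\}}T(x)$; when $\mu$ has no atoms this, together with right continuity, yields $T\in C[0,\infty)$ and hence $H=T^*T$ continuous.

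The reverse direction has a gap. You assert that when $H$ is continuous, ``the inverse construction produces a transfer matrix $T$ that is continuous in $x$,'' but this is precisely the nontrivial content of the claim and is not justified: $T\mapsto T^*T$ is not injective, and recovering $T$ from $H$ is not a pointwise algebraic operation---it requires solving a differential/integral equation for the rotational component of $T$---so continuity of $T$ cannot simply be read off from continuity of $H$. The fix is to reverse the logic. Lemma~\ref{2.1} gives $T(x-)=e^{J\mu\{x\}}T(x)$ unconditionally for $\mu\in DS$. Since $J\mu\{x\}$ is real symmetric, $(e^{J\mu\{x\}})^*=e^{J\mu\{x\}}$, so
\[
H(x-)=T(x-)^*T(x-)=T(x)^*\,e^{2J\mu\{x\}}\,T(x).
\]
Continuity of $H$ at $x$ forces $e^{2J\mu\{x\}}=I$ after cancelling the invertible factor $T(x)$ on both sides, and your eigenvalue computation then yields $\mu\{x\}=0$; continuity of $T$ follows as a consequence rather than as an input. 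The remaining reliance on the explicit identity $H=T^*T$ as the form of the bijection in \cite{12} is something you flag honestly; that caveat is reasonable but should persist in a final writeup, and one should verify it survives in the measure (rather than $L^1$) setting where the transfer matrix is defined through the $g(\mu\{s\}J)$ correction.
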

 \par
 One can see \cite{9} to find a straightforward calculation for Corollary \ref{cor2.1}, even though the calculation doesn't work for a real measure rather than an $L^1_{loc}$ function; moreover, as we have discussed before, an analogous conclusion as Corollary 2.2 in \cite{9} can be proved:
 \begin{corollary}
 Operator $T$ is in the limit point case at $\infty$.    
 \end{corollary}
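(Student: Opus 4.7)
The proof is essentially a one-line reduction to what has already been observed in the text, but let me spell out the chain of reasoning I would use.

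The plan is to invoke Theorem \ref{2.2} to transport the problem from the Dirac side to the canonical system side, where the limit point/limit circle dichotomy admits a transparent integral criterion. Given $\mu\in DS$, let $H\in CD$ be its partner under the bijection of Theorem \ref{2.2}. Because $H$ is Hermitian, non-negative, and satisfies $\det H=1$ a.e., its two eigenvalues $\lambda_1(x),\lambda_2(x)$ are positive reals with $\lambda_1\lambda_2=1$; by AM--GM,
\[
\operatorname{tr} H(x)=\lambda_1(x)+\lambda_2(x)\geq 2\sqrt{\lambda_1(x)\lambda_2(x)}=2
\]
for almost every $x\in[0,\infty)$. In particular $\operatorname{tr} H\notin L^1$ near $\infty$, and by the standard limit point criterion for canonical systems (Theorem 3.5 of \cite{1}), the canonical system $H$ is in the limit point case at $\infty$.

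The remaining step is to transfer this conclusion back to $T$. Since the bijection in Theorem \ref{2.2} is defined precisely so that $\mu$ and $H$ share the same Weyl--Titchmarsh function, and the limit point case is equivalent to the uniqueness (up to a scalar) of an $L^2$ solution of the eigenvalue equation for any $z\in\mathbb{C}^+$, the limit point property is preserved under the correspondence. Concretely, if $T$ were in the limit circle case at $\infty$, there would be two linearly independent $L^2$ solutions of $(Af)'=-zf$, which via the correspondence in Section 2 of \cite{12} would yield two linearly independent $L^2_H$ solutions of $u'=zJHu$, contradicting the limit point case for $H$.

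The only step that is not entirely mechanical is the last one, namely verifying that the solution space correspondence behind Theorem \ref{2.2} genuinely pairs $L^2[0,\infty)$ solutions on the Dirac side with $L^2_H(0,\infty)$ solutions on the canonical side; this is carried out in Chapter 5 of \cite{12} and is the true content that makes this corollary immediate. Apart from that, the proof is simply the elementary linear-algebra bound $\operatorname{tr} H\geq 2$ combined with the cited limit point criterion.
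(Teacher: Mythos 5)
Your proof is correct and follows precisely the route the paper intends: the text immediately preceding Theorem \ref{2.2} already records that $\det H=1$ and $H\geq 0$ force $\operatorname{tr}H\geq 2$ (your AM--GM step), hence $\operatorname{tr}H\notin L^1$ near $\infty$, hence the canonical system is limit point by Theorem 3.5 of \cite{1}, and the bijection of Theorem \ref{2.2} transfers this back to $T$. The paper ``leaves the proof to readers'' but has effectively spelled it out in the surrounding discussion, so your reconstruction is the expected one.
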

 We leave the proof to readers.
 Now, we have all the ingredients to set up the spectral representation theorem:
 \begin{theorem}
 \label{2.3}
  \[Uf=\int_0^\infty u^*(s,t)f(s)ds,f\in \underset{N>0}{\cup}L^2[0,N]\]
\[Uf=\lim\limits_{N \to \infty}U(\chi_{[0,N]}f),f\in L^2[0,\infty)\]
define a unitary map $U: L^2[0,\infty)\longrightarrow L^2(\mathbb{R},\rho)$ (here, limit is norm limit in $L^2(\mathbb{R},\rho)$).\\
Then this map, together with the spectral measure $\rho$ from the Weyl-Titchmarsh function, provides a spectral representation.   
 \end{theorem}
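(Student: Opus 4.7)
The plan is to follow the classical route: first prove Parseval's relation (isometry) for $f$ compactly supported, then extend by density, and finally verify surjectivity and that $U$ diagonalizes $T$. The key input is that the Herglotz representation of $m(z)$ provides a positive Borel measure $\rho$ on $\mathbb{R}$, and that $u(\cdot,t)$ is a generalized eigenfunction of $T$ at spectral parameter $t$, satisfying $(Au(\cdot,t))'=-tu(\cdot,t)$ together with $u(0,t)=(1,0)^\top$.

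Step 1 (Isometry on $\bigcup_N L^2[0,N]$). For $f\in L^2[0,N]$, I would work first with the truncated operator $T_N$ under the same left boundary condition $f_2(0)=0$ and an arbitrary self-adjoint right boundary condition at $N$. Because the transfer matrix $T(x,z)$ is entire and $[0,N]$ is compact, $T_N$ has compact resolvent and purely discrete spectrum $\{t_k^{(N)}\}$ with eigenfunctions proportional to $u(\cdot,t_k^{(N)})$, and the corresponding spectral measure $\rho_N$ is a weighted sum of point masses $\|u(\cdot,t_k^{(N)})\|^{-2}\delta_{t_k^{(N)}}$. Ordinary Fourier expansion in this orthogonal basis gives Parseval for $T_N$:
\[\|f\|_{L^2[0,N]}^2=\int_{\mathbb{R}}|U_Nf(t)|^2\,d\rho_N(t),\qquad U_Nf(t)=\int_0^N u^*(s,t)f(s)\,ds.\]
Because we are in the limit point case at $\infty$, the Weyl-Titchmarsh functions $m_N\to m$ uniformly on compact subsets of $\mathbb{C}^+$, so $\rho_N\to\rho$ weakly. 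For fixed $f$ supported in $[0,N_0]$, $U_Nf=Uf$ for all $N\ge N_0$, and $Uf$ is an entire function of polynomial growth on the real axis, so passing to the limit in the Parseval identity (using the growth control from the Herglotz bound on $m$ to dominate) yields $\|f\|^2=\|Uf\|_{L^2(\rho)}^2$.

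Step 2 (Extension and diagonalization). Since $\bigcup_N L^2[0,N]$ is dense in $L^2[0,\infty)$, $U$ extends uniquely to an isometry on all of $L^2[0,\infty)$, and the formula $Uf=\lim_N U(\chi_{[0,N]}f)$ is then immediate. For $f\in D(T_N)$ with $f_2(0)=0$ and $f$ vanishing in a neighborhood of $N$, integration by parts in
\[\int_0^N u^*(s,t)(Af)'(s)\,ds\]
using Lemma 2.1 and the integral equation satisfied by $u(\cdot,t)$ produces $U(Tf)(t)=t\,Uf(t)$; a density and closedness argument lifts this to $D(T)$.

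Step 3 (Surjectivity). To upgrade isometry to unitarity I would show that the range of $U$ is dense in $L^2(\mathbb{R},\rho)$. The cleanest route is to invoke Theorem \ref{2.2}: the bijection $DS\leftrightarrow CD$ preserves the Weyl-Titchmarsh function, hence the spectral measure, and the spectral representation for canonical systems is known (for instance in \cite{1}). Since the transfer matrices of $T$ and of the corresponding $H\in CD$ agree (by construction of the bijection), the range of the Dirac $U$ coincides with the range of the canonical-systems Fourier transform, which is all of $L^2(\mathbb{R},\rho)$. Alternatively, one can argue directly that $\{Uf:f\in C_c^\infty\}$ already contains enough entire functions of suitable exponential type to be dense, but this is essentially the de Branges-space argument used in Section~3 and it is cleaner to defer to the canonical-systems statement.

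The main obstacle is Step 1, specifically the passage from the discrete Parseval identity for $T_N$ to the continuum Parseval identity for $T$: it requires enough uniform control on $\rho_N$ near $\infty$ (via the Herglotz bound on $\mathrm{Im}\,m_N$) to justify exchanging the weak limit with the $L^2$ integral against $|Uf|^2$. Once that estimate is in place, the remaining pieces — isometric extension, diagonalization, and surjectivity via Theorem~\ref{2.2} — are essentially routine.
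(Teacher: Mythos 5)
Your proposal is correct in outline, but it takes a genuinely different route from the paper. The paper disposes of Theorem \ref{2.3} in one line: it is ``directly from Theorem 3.17 of \cite{1} once we apply Theorem \ref{2.2}''---that is, it transfers the known spectral representation for canonical systems across the bijection $DS\leftrightarrow CD$, using the fact that a Dirac system and its associated canonical system share the same Weyl--Titchmarsh function (and hence the same $\rho$). You instead reconstruct the classical Weyl--Titchmarsh--Kodaira argument from scratch: discrete Parseval for $T_N$ on a compact interval, weak convergence $\rho_N\to\rho$ from locally uniform convergence $m_N\to m$, passage to the limit (with the acknowledged need for a uniform Herglotz bound $\rho_N([-R,R])\le C(1+R^2)$, or a decay estimate $|Uf(t)|\le C_k|t|^{-k}$ obtained by applying $U$ to $T^kf$, to dominate the limit), density extension, and intertwining. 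Only in Step 3 (surjectivity) do you fall back on the canonical-systems machinery. Each route has something to recommend it: the paper's citation is the most economical given that Theorem \ref{2.2} and the canonical-systems spectral theorem are already in hand, whereas your argument is more self-contained and makes the analytic content visible. One imprecision worth flagging: in Step 3 you write that ``the transfer matrices of $T$ and of the corresponding $H\in CD$ agree (by construction of the bijection).'' They do not literally agree---the bijection involves a gauge/variable transformation relating the two systems---so the correct statement is that the two transforms are intertwined by the unitary identification $L^2[0,\infty)\cong L^2_H(0,\infty)$ built into the bijection, which is what makes the ranges coincide. That is exactly the point the paper is implicitly relying on when it cites Theorem 3.17 of \cite{1}.
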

 This is directly from Theorem 3.17 of \cite{1} once we apply Theorem \ref{2.2}.

\vspace{2cm}
\section{De Branges Spaces of Dirac
Operators}
\par
In this section, we discuss de Branges theory. In de Branges's brilliant papers \cite{4},\cite{5},\cite{6},\cite{7}, he came up with a lot of tools we need to use in this paper; however, we refer to \cite{8} since it has a modified version we will take advantage of. A canonical-system version of the theory can be found in \cite{1}, and more details about the Dirac version are in \cite{12}.
\subsection{Fundamental Properties of De Brange Spaces}
\par
A de Branges (dB) function is an entire function $E$ such that $|E(z)|>|E^{\#}(z)|$ for $z\in \mathbb{C}^+$. Here, $E^{\#}(z)=\overline{E(\overline{z})}$. The de Branges space of $E$ is defined as 
\[B(E):=\{F: F\ \textit{entire},\ \frac{F}{E},\frac{F^{\#}}{E}\in H^2\}\]
where $H^2=H^2(\mathbb{C}^+)$ is the Hardy space on the upper half plane.
\par
Endowed with the following inner product
\[[F,G]=\frac{1}{\pi}\int_\mathbb{R}\overline{F}(t)G(t)\frac{dt}{|E(t)|^2}\]
$B(E)$ becomes a Hilbert space. Moreover, the reproducing kernels 
\[J_w(z)=\frac{\overline{E}(w)E(z)-\overline{E^{\#}}(w)E^{\#}(z)}{2i(\overline{w}-z)}\]
are in $B(E)$, and $[J_w,F]=F(w)$ for all $F\in B(E),w\in \mathbb{C}$.
\par
A dB space is determined uniquely by a dB function up to a constant, namely, $B(E_1)=B(E_2)$ if and only if 
\[\begin{pmatrix}
ReE_2\\
ImE_2\\
\end{pmatrix}=M\begin{pmatrix}
ReE_1\\
ImE_1\\
\end{pmatrix}\]
for some $M\in SL(2,\mathbb{R})$. Here, $B(E_1)=B(E_2)$ means they share the same functions and are isometrically equal to each other as Hilbert spaces.
\par
Moreover, We call a dB space $B(E)$ regular, if for all $z_0\in \mathbb{C}$,
\[F\in B(E)\Rightarrow S_{z_0}F(z):=\frac{F(z)-F(z_0)}{z-z_0}\in B(E)\]
where $S_{z_0}F(z_0)=\lim\limits_{z\to z_0}S_{z_0}F(z)$.
\par
On the other hand, the following theorem characterizes a Hilbert space as a dB space, which belongs to de Branges:
\begin{theorem}
\label{3.1}
Let $H$ be a Hilbert space whose elements are entire functions. Assume that:\\
1) For every $z\in \mathbb{C}$, point evaluation $z(F):=F(z)\in H^*$;\\
2) If $F\in H$ with $F(w)=0$, then $G(z)=\frac{z-\overline{w}}{z-w}F(z)\in H$ and $||F||=||G||$;\\
3) $F\to F^{\#}$ is isometric on $H$.\\
Then $H=B(E)$ for some de Branges function $E$.
\\
Conversely, if $B(E)$ is a de Branges space, then it satisfies those assumptions above.    
\end{theorem}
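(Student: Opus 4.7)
The converse direction (if $H = B(E)$ then axioms (1)--(3) hold) is immediate from the definitions given in the excerpt: the explicit reproducing kernel $J_w$ lies in $B(E)$, so point evaluation is continuous; the identity $|F^{\#}(t)| = |F(t)|$ on $\mathbb{R}$ makes $F \mapsto F^{\#}$ an isometry for the norm $\tfrac{1}{\pi}\int|\cdot|^2/|E|^2\,dt$; and for (2), the factor $(z-\bar w)/(z-w)$ is unimodular on the real line, hence multiplication by it preserves the weighted $L^2$-norm. I would dispatch this in a few lines.

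For the substantive forward direction, the plan is to construct the de Branges function $E$ from the abstract reproducing kernel of $H$. By axiom (1) and the Riesz representation theorem, for each $w \in \mathbb{C}$ there is a reproducing kernel $K(w,\cdot) \in H$ with $[K(w,\cdot), F] = F(w)$; axiom (3), being a conjugate-linear isometry, forces the symmetry $K(w,z)^{\#} = K(\bar w, z)$.

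The crux is to establish the identity
\[
2i(\bar w - z)\, K(w, z) \;=\; \overline{E(w)}\, E(z) \;-\; \overline{E^{\#}(w)}\, E^{\#}(z) \qquad (\ast)
\]
for a suitable entire function $E$. To extract $(\ast)$ I would fix a reference point $w_0 \in \mathbb{C}^+$ with $K(w_0, w_0) \neq 0$. For any $v \in \mathbb{C}$, the function
\[
F_v(z) \;:=\; K(v, z) \;-\; \frac{K(v, w_0)}{K(w_0, w_0)}\, K(w_0, z)
\]
vanishes at $w_0$, so axiom (2) puts $G_v(z) := \tfrac{z - \bar w_0}{z - w_0}\, F_v(z)$ in $H$ with $\|G_v\| = \|F_v\|$. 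Polarizing in $v$ to obtain $[G_u, G_v] = [F_u, F_v]$ and evaluating both sides via the reproducing property yields a bilinear identity in $u, v$ that, after rearrangement, separates as a rank-two tensor; reading off its two factors defines $E$ as a specific linear combination of $K(w_0, z)$ and $K(\bar w_0, z)$, normalized so that $(\ast)$ holds. Setting $w = z$ in $(\ast)$ gives $|E(z)|^2 - |E^{\#}(z)|^2 = 4\,\mathrm{Im}(z)\,K(z, z)$, so the de Branges inequality on $\mathbb{C}^+$ reduces to $K(z, z) = \|K(z, \cdot)\|^2 > 0$. Finally, $(\ast)$ identifies the reproducing kernel of $B(E)$ with $K$, and density of the span of reproducing kernels in any reproducing-kernel Hilbert space upgrades this into an isometric identification $H = B(E)$.

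The main obstacle is the algebraic derivation of $(\ast)$: the $w_0$-dependence must cancel so that the right-hand side factors cleanly into a form in $w$ times a form in $z$, and the resulting $E$ must be verified to be entire rather than merely meromorphic with possible poles at $w_0$ and $\bar w_0$. A secondary technical point is confirming $K(z, z) > 0$ throughout $\mathbb{C}^+$ whenever $H \neq \{0\}$, which I would handle by propagating non-vanishing of the kernel from a single reference point via axiom (2).
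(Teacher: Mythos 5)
The paper does not prove this theorem; it is quoted as background and attributed to de Branges, so there is no in-paper argument to compare against. Your sketch is the classical de Branges construction (fix $w_0\in\mathbb{C}^+$ with $K(w_0,w_0)\neq 0$, polarize axiom (2) around $w_0$, and factor the resulting bilinear identity to read off $E$ as a linear combination of $K(w_0,\cdot)$ and $K(\bar w_0,\cdot)$), and the broad outline, including the reduction of $|E|>|E^\#|$ on $\mathbb{C}^+$ to $K(z,z)>0$ and the propagation of non-vanishing via axiom (2), is sound.

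One step in your converse direction deserves tightening. Unimodularity of $\frac{z-\bar w}{z-w}$ on $\mathbb{R}$ yields $\|G\|=\|F\|$ only once you already know $G\in B(E)$; it does not by itself give membership. For $w\in\mathbb{C}^+$ the factor $\frac{z-\bar w}{z-w}$ is the reciprocal of a Blaschke factor, so $G/E$ is not a priori in $H^2$. What saves it is that $E(w)\neq 0$ and $F(w)=0$ force $F/E$ to vanish at $w$, so passing to $G/E=\frac{z-\bar w}{z-w}\cdot\frac{F}{E}$ merely strips one Blaschke factor from the inner part of $F/E$, leaving the outer part unchanged; hence $G/E\in H^2$ (and likewise for $G^{\#}/E$, after applying $\#$). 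With that addendum the converse is complete.
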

The following de Branges' theorem will play a significant role later:
\begin{theorem}[the Ordering Theorem]
Let $B(E),B(E_1),B(E_2)$ be regular dB spaces and $B(E_1),B(E_2)\subset B(E)$, then either $B(E_1)\subset B(E_2)$ or $B(E_2)\subset B(E_1)$.    
\end{theorem}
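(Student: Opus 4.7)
The plan is to reduce the ordering statement to a reproducing-kernel comparison via a two-stage argument: first identify the intersection $B(E_1)\cap B(E_2)$ as a regular de Branges subspace of $B(E)$, then rule out the possibility that both orthogonal complements are nontrivial.

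\emph{Stage 1 (Intersection is a dB space).} I would verify that $M:=B(E_1)\cap B(E_2)$, endowed with the inner product inherited from $B(E)$ (well-defined since both inclusions $B(E_i)\subset B(E)$ are isometric), satisfies the three conditions of Theorem \ref{3.1}. Continuity of point evaluation and the isometric involution $F\mapsto F^{\#}$ restrict directly from either $B(E_i)$. For the divided-difference isometry, if $F\in M$ vanishes at $w$, then $G(z)=\tfrac{z-\bar w}{z-w}F(z)$ lies in each $B(E_i)$ with $\|G\|=\|F\|$, hence in $M$ with norm preserved. Theorem \ref{3.1} then yields $M=B(E_0)$ for some de Branges function $E_0$, and $M$ inherits regularity from $B(E_1)$ because $S_{z_0}F$ lies in $B(E_1)\cap B(E_2)=M$ whenever $F$ does.

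\emph{Stage 2 (Comparability).} Set $W_i:=B(E_i)\ominus B(E_0)$ inside $B(E)$. The ordering conclusion is equivalent to $W_1=\{0\}$ or $W_2=\{0\}$. A first observation is $W_1\cap W_2=\{0\}$: any common element lies both in $B(E_1)\cap B(E_2)=B(E_0)$ and in $B(E_0)^{\perp}$. To close the argument I would compute the reproducing kernels $J_w^{E_i}$ as orthogonal projections of $J_w^E$ in $B(E)$ and apply the regularity-induced kernel identities to show that the difference $J_w^{E_1}-J_w^{E_2}$ is constrained to lie in a one-dimensional family governed by a single growth parameter, which precludes $W_1$ and $W_2$ from being nontrivial simultaneously. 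Equivalently, assuming nonzero $F\in W_1$ and $G\in W_2$, one uses the zero-shift closure from regularity to generate enough elements of $W_1\oplus W_2$ orthogonal to $B(E_0)$ to force a reproducing-kernel identity that collapses one of the summands.

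\emph{Main obstacle.} The technical heart is Stage 2: extracting from regularity enough rigidity on the reproducing kernels of the three nested spaces $B(E_0)\subset B(E_i)\subset B(E)$ to force one $W_i$ to vanish. This is exactly de Branges' original argument for the ordering theorem, and for a clean execution I would follow the streamlined treatment in \cite{8} (cf.\ also \cite{4}) rather than redevelop the kernel calculations from scratch.
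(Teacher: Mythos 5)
The paper does not prove the Ordering Theorem; it is stated as a known result of de Branges, with the primary sources \cite{4}--\cite{7} and Remling's treatment \cite{8} cited for it, and it is used as a black box in what follows. There is therefore no in-paper argument against which to compare your attempt.

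Evaluating the proposal on its own terms: Stage~1 is reasonable once one makes explicit a hypothesis the paper leaves implicit, namely that the inclusions $B(E_i)\subset B(E)$ are \emph{isometric}; only then is $M=B(E_1)\cap B(E_2)$ a closed subspace of $B(E)$ with a single unambiguous inherited norm, and the verification of the axioms of Theorem~\ref{3.1} you sketch goes through (one also needs $M\neq\{0\}$, which requires a brief argument using regularity rather than being automatic). The substantive gap is Stage~2, which is not an argument at all but a description of what an argument would need to accomplish: the assertion that $J_w^{E_1}-J_w^{E_2}$ is ``constrained to lie in a one-dimensional family governed by a single growth parameter'' is exactly the content of de Branges' theorem and is left unproved, and your final sentence explicitly punts to \cite{4} and \cite{8} for the actual kernel computation. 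Reducing the ordering statement to ``one of $W_1,W_2$ vanishes'' and noting $W_1\cap W_2=\{0\}$ is a correct but trivial reformulation; nothing in the proposal rules out both $W_i$ being nontrivial, which is the entire difficulty. As submitted, this is an outline of the shape of a proof, not a proof, so it neither matches nor deviates from an in-paper argument (there is none) and does not independently establish the theorem.
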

There is a profound connection between dB spaces and canonical systems due to de Branges, which is the key point to construct our results:
\begin{theorem}
If $B(E)$ is a regular dB space, $E(0)=1$ and $N>0$, then there is a coefficient $H(x)\in L^1(0,N)$ of some canonical system on $(0,N)$ such that $E(z)=u_1(N,z)-iu_2(N,z)$. Moreover, $H$ can be chosen so that $trH$ is a positive constant. Here, $u$ is the solution of the equation (2) with the initial value  
$u(0,z)=\begin{pmatrix}
    1\\
    0\\
\end{pmatrix}$.   
\end{theorem}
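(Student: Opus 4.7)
\medskip

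\noindent\textbf{Proof plan.} The plan is to use de~Branges' theorem on chains of regular dB subspaces to manufacture the canonical system, and then to reparametrize by trace to normalize it. Concretely, I would proceed as follows.

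\medskip

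First, I would build the canonical chain. Since $B(E)$ is a regular dB space and $E(0)=1$, I may assume without loss that $E$ itself is suitably normalized (real at $0$, etc.; otherwise multiply by a unimodular constant, which does not change the space). The Ordering Theorem, applied to the family of \emph{all} regular dB subspaces $B(E')\subset B(E)$ normalized by $E'(0)=1$, yields a totally ordered chain. One shows, as in de~Branges' structure theorem, that this chain can be parametrized by a parameter $t$ ranging over an interval $[0,L]$ so that $t\mapsto B(E_t)$ is increasing, left-continuous, with $B(E_0)=\{0\}$ and $B(E_L)=B(E)$, and so that $t$ is the value of the reproducing kernel at $0$, i.e.\ $J^{(t)}_0(0)=t/\pi$ (or an analogous normalization). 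Regularity of $B(E)$ passes to each $B(E_t)$, which rules out the presence of jump discontinuities in the chain other than those coming from singular components (and those are absent in the regular setting, as $S_{z_0}$ is everywhere defined on $B(E)$).

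\medskip

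Second, I would extract the coefficient $H$ from the chain. Writing $E_t=A_t-iB_t$ with $A_t,B_t$ real entire, the identity
\begin{equation*}
\tfrac{\partial}{\partial t}\begin{pmatrix}A_t\\ B_t\end{pmatrix}(z)=zJH(t)\begin{pmatrix}A_t\\ B_t\end{pmatrix}(z)
\end{equation*}
is forced by comparing reproducing kernels of $B(E_{t+\Delta t})\ominus B(E_t)$ as $\Delta t\to 0$: the orthogonal complement is one-dimensional (up to the regular-chain decomposition), and its single vector in the $(A,B)$ parametrization produces a rank one positive semidefinite matrix $H(t)\Delta t$. This is the classical calculation underlying Theorem~37 in de~Branges' book. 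Summing over a partition and using that $t$ parametrizes by kernel size yields $\int_0^L \operatorname{tr} H(t)\,dt = L<\infty$, hence $H\in L^1(0,L)$. Setting $u(t,z)=(A_t(z),B_t(z))^\top$, one has $u(0,z)=(1,0)^\top$ because $E_0(z)=1$ and by the normalization $E_t(0)=1$ gives $A_t(0)=1$, $B_t(0)=0$. Thus $E(z)=E_L(z)=u_1(L,z)-iu_2(L,z)$.

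\medskip

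Third, I would normalize the trace. On the interval $(0,L)$ set $\sigma(t):=\int_0^t\operatorname{tr} H(s)\,ds$ and let $\tau$ be its inverse. Defining $\widetilde H(\tau):=H(t)/\operatorname{tr} H(t)$ yields a reparametrized canonical system whose trace is identically $1$. Choosing $N:=\sigma(L)$ and rescaling $\widetilde H$ by the constant $N/L$ (or, equivalently, stretching the parameter) produces a canonical system on $(0,N)$ with $\operatorname{tr} \widetilde H$ equal to any prescribed positive constant, with the same terminal $E$. Bijectivity of $\sigma$ (away from nullsets where $H\equiv 0$, which can be collapsed without changing the chain) keeps $\widetilde H\in L^1(0,N)$.

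\medskip

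The main obstacle is the second step: justifying that the chain of regular dB subspaces is actually \emph{continuous} in $t$ and that the orthogonal increments generate a measurable, $L^1$, positive semidefinite $H(t)$. This is where de~Branges' proof is delicate, since a priori the chain could have gaps corresponding to finite-dimensional isometric inclusions, and one must use regularity of $B(E)$ (through the well-definedness of the difference quotient operator $S_{z_0}$) to exclude such gaps and identify the increment with a genuine canonical-system step. Everything else, including the trace normalization and the initial condition $u(0,z)=(1,0)^\top$, is then routine bookkeeping.
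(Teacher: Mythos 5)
The paper does not prove this statement; it is presented as de~Branges' theorem (with references to de~Branges' papers and to Remling's monograph \cite{1}), and the rest of the paper is built on top of it. So there is no paper proof for you to reproduce or diverge from. Your plan is nonetheless the standard route: build the totally ordered chain of regular dB subspaces via the Ordering Theorem, extract $H$ by differentiating in the chain parameter, and normalize by the trace of $H$. That is the right architecture, and you are also right that the hard part is establishing the continuity of the chain and the measurability, positivity, and local integrability of $H$.

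Two points of imprecision in your step~2, however. The orthogonal complement $B(E_{t+\Delta t})\ominus B(E_t)$ is \emph{not} one-dimensional for a continuous chain and finite $\Delta t$---for instance $PW_{t+\Delta t}\ominus PW_t$ is infinite-dimensional---it is one-dimensional only across an indivisible (singular) interval of the chain, which is exactly the situation the paper later invokes in Lemma~\ref{l4.5} via Corollary~10.11 of \cite{8}. The rank-one/rank-two picture you want is an infinitesimal statement: it is the derivative $\partial_t J^{(t)}_w(z)$ of the reproducing kernel at a fixed $w$, not a finite orthogonal increment of the space, that yields the integrand $u^*(t,w)H(t)u(t,z)$; turning this into the canonical-system ODE for $(A_t,B_t)$ requires de~Branges' integral-equation argument rather than a bare difference-quotient calculation. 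Also, normalizing so that $J^{(t)}_0(0)=t/\pi$ controls only $\int_0^t H_{11}\,ds$, not $\int_0^t \operatorname{tr}H\,ds$; to pin down the trace you also need the conjugate kernel at $0$ (which supplies $\int_0^t H_{22}\,ds$), a device the paper itself uses in the proof of Lemma~\ref{l4.9}. Once $H$ is in hand, the initial condition and the rescaling to an arbitrary prescribed $N$ are routine bookkeeping, as you say.
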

\par
Now, we turn to the type of an entire function. An entire function $F$ is said to be of exponential type if $|F(z)|\leq C(\tau)e^{\tau|z|}, (z\in \mathbb{C})$ for some $\tau>0$. the infimum of the $\tau>0$, denoted by $\tau (F)$, is called the type of $F$. 
\par
If we consider a canonical system on $[0,N]$ with $H\in L^1(0,N)$, then the type of $E_N(z)=u_1(N,z)-iu_2(N,z)$ is given by (Theorem 4.26,\cite{1})
\[\tau_N (E)=\int_0^N\sqrt{detH(x)}dx\]
Another significant conclusion related to the type from a canonical system is that The space $B_\tau:=\{F\in B(E_N):\tau (F)\leq \tau\}$ is also a de Branges space and $B_\tau=B(E_a)$, where $a=\max\{x\in \mathbb{R}:\tau_x(E)\leq \tau\}$.
\subsection{dB Spaces of Dirac Operators}
 Let $N<\infty$, $\mu\in DS$ and we consider $T_N$. An easy calculation  by the definition shows that $E_N(z)=u_1(N,z)-iu_2(N,z)$ is a dB function, and the reproducing kernels of $B(E_N)$ are given by
\[J_w(z)=\int_0^Nu^*(x,w)u(x,z)dx\]
Recall Theorem \ref{2.3} that the unitary map $U$ is related to the spectral measure; on the other hand, $U$ is also related to a dB space as well, precisely, 
\[Uf(z)=\int_0^Nu^*(x,\overline{z})f(x)dx\]
defines an isometry $U:L^2[0,N]\to B(E_N)$. This conclusion is easy to see if we notice its canonical-system version (Theorem 4.7,\cite{1}) and that a Dirac system is a special canonical system. Another observation from this isometry is that if $N_1<N_2$, then $B(E_{N_1})\subset B(E_{N_2})$. Moreover, if $F\in B(E_{N_1})$ and $|h|\leq \tau_{N_2}-\tau_{N_1}$ where $\tau_{N_i}$ is the type of $E_{N_i}$, then $e^{ihz}F\in B(E_{N_2})$.
\par
Now, we can state our first result:
\begin{theorem}
\label{3.4}
Assume $\mu\in DS$. Then for any arbitrary $N \in (0,\infty)$, $B(E_N)$ is a Paley-Wiener space as sets, i.e., 
\[B(E_N)=PW_N:=\{F(z)=\widehat{f}(z): f\in L^2(-N,N)\}\]
Moreover, if $d\mu\ll dt$ w.r.t. Lebesgue measure, then there is $\phi\in L^1(\mathbb{R})$ satisfying $\phi(x)=\overline{\phi}(-x)$ such that $\frac{1}{|E_N(t)|^2}=1+\widehat{\phi}(t)$ on $\mathbb{R}$, where $\widehat{\rho}(z)=\int_\mathbb{R}e^{izs}d\rho(s)$ is the Fourier transform. In this case, the norm of $B(E_N)$ is given as follows for $F=\widehat{f}\in B(E_N)$: 
\[\lVert F\rVert^2_{B(E_N)}=2\langle f,f+\phi*f\rangle\]
\end{theorem}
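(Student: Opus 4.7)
I would prove the three assertions in the order they are stated.

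\emph{Set equality $B(E_N)=PW_N$.} Since $\det H=1$ for $H \in CD$, the type formula $\tau(E_N)=\int_0^N\sqrt{\det H}\,dx=N$ shows every $F \in B(E_N)$ is entire of exponential type at most $N$. To upgrade to $F \in L^2(\mathbb{R})$, I would exploit the onto isometry $U : L^2[0,N] \to B(E_N)$ supplied by Theorem \ref{2.3} and build a Volterra transformation representation $u(x,z)=u_0(x,z)+\int_0^x K(x,s)u_0(s,z)\,ds$ relating the general solution to the free one $u_0(x,z)=(\cos xz,\sin xz)^T$, constructed by iterating Persson's existence theorem in the measure setting. This gives $U=U_0(I+V)$ with $V$ a bounded Volterra operator on $L^2[0,N]$; since $I+V$ is invertible on $L^2[0,N]$ and $\mathrm{range}(U_0)=PW_N$, we conclude $\mathrm{range}(U)=PW_N$. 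The chief obstacle lies here: constructing and $L^2$-bounding the kernel $K$ when the coefficient is only a measure $\mu \in DS$, which forces careful use of the right-continuous BV conventions of Section 2 and the full Persson framework.

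\emph{Fourier representation of $1/|E_N(t)|^2$.} Under $\mu \ll dt$, Corollary \ref{cor2.1} gives $H \in AC[0,N]$ and the transformation kernel acquires an ordinary $L^1$ density. Substituting this representation into $|E_N(t)|^2=u_1(N,t)^2+u_2(N,t)^2$ on $\mathbb{R}$ and applying Fubini yields $|E_N(t)|^2=1+\widehat\psi(t)$ for some compactly supported $\psi \in L^1(\mathbb{R})$. Since $|E_N(t)|^2>0$ pointwise (the first column of the transfer matrix is nonzero for all real $t$) and $|E_N(t)|^2 \to 1$ as $|t|\to\infty$ by Riemann-Lebesgue, Wiener's theorem in the Wiener algebra $\mathbb{C}+\widehat{L^1}$ produces $\phi \in L^1(\mathbb{R})$ with $1/|E_N(t)|^2=1+\widehat\phi(t)$. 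The symmetry $\phi(x)=\overline{\phi(-x)}$ follows immediately from the realness of $1/|E_N(t)|^2$ on $\mathbb{R}$.

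\emph{Norm identity.} Given $F=\widehat f$ with $f \in L^2(-N,N)$, substitute $1/|E_N(t)|^2=1+\widehat\phi(t)$ into the de Branges inner product to split
\[\lVert F\rVert^2_{B(E_N)}=\frac{1}{\pi}\int_\mathbb{R}|F(t)|^2\,dt+\frac{1}{\pi}\int_\mathbb{R}|F(t)|^2\widehat\phi(t)\,dt.\]
Plancherel converts the first integral into $2\langle f,f\rangle$. For the second, Fubini combined with Fourier inversion applied to $\phi \in L^1$ gives $\frac{1}{\pi}\cdot 2\pi\int \overline{f(s)}(\phi*f)(s)\,ds = 2\langle f,\phi * f\rangle$, and summing yields $\lVert F\rVert^2=2\langle f,f+\phi*f\rangle$. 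Among the three parts, the set equality is the hardest, because it is the one requiring genuine new structural input (the transformation kernel in the singular-measure regime); the remaining steps are essentially Fubini, Plancherel, and Wiener once that input is in place.
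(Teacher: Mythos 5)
Your overall plan identifies the right goals, and the type-formula step, the use of Wiener's lemma, and the Plancherel computation of the norm are all sound and match the paper. But the central step, proving $B(E_N)=PW_N$ as sets, is a genuine gap: you propose a transmutation representation $u(x,z)=u_0(x,z)+\int_0^x K(x,s)u_0(s,z)\,ds$ with $K$ an $L^2$ kernel and $U=U_0(I+V)$ with $V$ a bounded Volterra operator, and you explicitly flag the construction of $K$ in the measure setting as ``the chief obstacle,'' but you do not resolve it. For a general $\mu\in DS$ this obstacle is real, not merely technical: at any point mass $x_0$ the solution $u$ jumps by the factor $e^{J\mu\{x_0\}}$ (Lemma \ref{2.1}), whereas $u_0+\int_0^xK(x,s)u_0(s,z)\,ds$ with $K\in L^2$ is continuous in $x$, so no such $L^2$ transmutation kernel exists. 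At best $K$ would have to be a measure-valued object with point masses along the diagonal, and then the boundedness of $V$ on $L^2$ and the invertibility of $I+V$ no longer follow from the usual Volterra quasinilpotence argument. This is precisely the failure mode the introduction alludes to when it says the method of Levitan--Sargsjan ``cannot be used directly if the coefficient is not continuous.''

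The paper's actual route avoids any transmutation kernel. It normalizes by the scalar $k$ solving $k=1-\int_0^x d\omega\,k$ (which absorbs the jump amplitudes) and by the exponential matrix $Q(x,z)$, so that $f=\tfrac{1}{k}Q^{-1}y$ satisfies a Volterra integral equation whose kernel is the off-diagonal measure $\begin{pmatrix}0 & e^{-2izs}dP(s)\\ e^{2izs}d\overline P(s)&0\end{pmatrix}$ (Lemma \ref{3.3}). Iterating this Neumann series on a short interval $[0,\delta]$ with $\|\mu\|((0,\delta])<\tfrac18$ and pushing forward through $T:(t_1,\dots,t_n)\mapsto(2(t_1-t_2+\cdots\pm t_n),t_2,\dots,t_n)$ yields $E_\delta(z)=k(\delta)e^{-iz\delta}(1+\widehat\rho(z))$ with $\rho$ a complex measure on $[0,2\delta]$ and $|\rho|(\mathbb R)\le\tfrac12$. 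This gives the two-sided bounds $\tfrac{|k(\delta)|}{2}|e^{-iz\delta}|\le|E_\delta(z)|\le\tfrac{3|k(\delta)|}{2}|e^{-iz\delta}|$ on $\overline{\mathbb C^+}$, hence $B(E_\delta)=B(e^{-iz\delta})=PW_\delta$ as sets, and the result is propagated to $[0,N]$ by exponential shifts and a piecewise estimate of $|E_N(t)|$ on finitely many subintervals of small variation between the large point masses. Your Wiener-lemma and norm-identity steps are then essentially the same as the paper's, except that the paper reaches $\tfrac{1}{|E_N(t)|^2}=\tfrac{1}{1+\widehat g(t)}$ directly from the series form $E_N=e^{-izN}(1+\widehat\rho)$ (with $\rho\in L^1(0,2N)$ when $d\mu\ll dt$) rather than from the pointwise expression $|E_N(t)|^2=u_1^2+u_2^2$ and a transmutation kernel. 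In short: you correctly predicted where the difficulty lives, but the device you propose to get past it does not apply to $\mu\in DS$, and the paper replaces it with the exponential gauge plus Neumann series argument.
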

Since $\lVert F\rVert^2_{B(E_N)}$ is determined by $\phi*f$ on $(-N,N)$, that is, by the restriction of $\phi$ to $(-2N,2N)$, hence without loss of generality, we can assume $\phi\in L^1(-2N,2N)$; moreover, as a norm, we require that $\langle f,f+\phi*f\rangle>0$ for non-zero $f\in L^2(-N,N)$. Those two conditions are used to define a subset of $L^1(\mathbb{R})$,denoted by $\Phi_N$, as follows:
\begin{definition}
\label{d3.1}
\[\Phi_N:=\{\phi\in L^1(-2N,2N):1) \phi(x)=\overline{\phi}(-x); 2)\langle f,f+\phi*f\rangle>0, 0\neq f\in L^2(-N,N)\}\]    
\end{definition}
Roughly speaking, Theorem \ref{3.4} says that if we have a classical Dirac operator with an $L^1_{loc}$ coefficient, then the norm of $B(E_N)$ is determined by a function in $\Phi_N$.
\subsection{the Proof of Theorem \ref{3.4}}
Consider $[0,N]$ and pick $\mu \in DS$. We define the set of all jump points of $\mu$ as follows:
\[S(\mu):=\{x\in(0,\infty): \mu\{x\}\neq 0\}\] 
and a subset:
\[S_N(\mu):=\{x\in[0,N]:x\in S(\mu)\}=\{x_1, x_2, x_3,\cdots\}\]
and a function on $(0,\infty)$ by
\[t(x):=(\mu_1^2\{x\}+\mu_2^2\{x\})^\frac{1}{2}\]
Obviously, $t(x)\neq 0$ if and only if $x\in S(\mu)$. We introduce a measure $\omega$ which is defined as follows:
\[\omega:=\sum\limits_{x\in S_N(\mu)}\frac{e^{t(x)}+e^{-t(x)}-2}{2}\delta_x\]
where $\delta_x$ is the Dirac measure at $x$.
\begin{lemma}
$\omega$ is a complex measure.    
\end{lemma}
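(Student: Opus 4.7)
The claim that $\omega$ is a complex measure amounts to the claim that its total mass is finite, since the weights $\tfrac{1}{2}(e^{t(x)}+e^{-t(x)}-2) = \cosh(t(x))-1$ are all non-negative and $\omega$ is supported on the at-most-countable set $S_N(\mu)\subset[0,N]$. So the whole task is to bound
\[
\omega([0,N]) \;=\; \sum_{x\in S_N(\mu)}\bigl(\cosh(t(x))-1\bigr).
\]

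My plan is to exploit the defining condition (1) of $DS$, namely $|\mu_i|([0,N])<\infty$ for $i=1,2$. From the elementary bound $t(x) = (\mu_1^2\{x\}+\mu_2^2\{x\})^{1/2} \le |\mu_1\{x\}|+|\mu_2\{x\}|$, summing over $x\in S_N(\mu)$ gives
\[
\sum_{x\in S_N(\mu)} t(x) \;\le\; |\mu_1|([0,N]) + |\mu_2|([0,N]) \;<\; \infty.
\]
This is the heart of the argument and comes directly from the $DS$ assumption; in particular $t(x)$ is bounded on $S_N(\mu)$, say $t(x)\le M$.

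To finish, I would use the elementary estimate that the function $t\mapsto (\cosh t-1)/t^2$ is non-decreasing on $[0,\infty)$, so for $t\in[0,M]$
\[
\cosh(t)-1 \;\le\; \frac{\cosh(M)-1}{M^2}\, t^2 \;\le\; \frac{\cosh(M)-1}{M}\, t.
\]
Summing this over $x\in S_N(\mu)$ and using the previous bound on $\sum t(x)$ yields $\omega([0,N])<\infty$, which is what we needed. There is really no obstacle here: the only point that requires care is verifying that $|\mu_i|([0,N])<\infty$ already implies summability of the point-mass moduli of $\mu$, which is standard for a signed Borel measure of finite total variation, and matching that linear summability against the quadratic blow-up of $\cosh(t)-1$ near $0$ via the elementary inequality above.
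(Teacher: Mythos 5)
Your proof is correct, and it rests on the same essential observation as the paper's: the summability $\sum_{x\in S_N(\mu)} t(x)<\infty$, which follows from the $DS$ condition $|\mu_i|([0,N])<\infty$ via $\sqrt{a^2+b^2}\le|a|+|b|$. Where you diverge is in how you pass from that linear summability to the finiteness of $\sum(\cosh t(x)-1)$. The paper expands $\cosh t - 1$ into its power series, interchanges the order of summation, and applies the inequality $\sum_x t^{2n}(x)\le\bigl(\sum_x t(x)\bigr)^{2n}$ term by term, resumming to $\tfrac12(e^{\sum t}+e^{-\sum t}-2)$. You instead observe that $\sum t(x)<\infty$ forces $t(x)\le M$ uniformly, invoke the monotonicity of $t\mapsto(\cosh t-1)/t^2$ on $[0,\infty)$, and chain through $t^2\le Mt$ to get the linear bound $\cosh t-1\le \tfrac{\cosh M-1}{M}\,t$ on $[0,M]$. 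Both arguments are elementary and equally rigorous; yours avoids the double sum and series resummation, while the paper's gives a cleaner closed-form bound $|\omega|(\mathbb{R})\le\cosh\bigl(\sum t(x)\bigr)-1$ that does not require introducing an auxiliary constant $M$.
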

\begin{proof}
 The total variation of $\omega$ is given by
 \[|\omega|(\mathbb{R})=\frac{1}{2}\sum\limits_{x\in S_N(\mu)}(e^{t(x)}+e^{-t(x)}-2)=\frac{1}{2}\sum\limits_{x\in S_N(\mu)}(\sum_{n=1}^\infty\frac{t^{2n}(x)}{(2n)!})\]
 Notice that 
 \[\frac{1}{2}\sum_{n=1}^\infty\frac{\sum\limits_{x\in S_N(\mu)}t^{2n}(x)}{(2n)!}\leq \frac{1}{2}\sum_{n=1}^\infty\frac{(\sum\limits_{x\in S_N(\mu)}t(x))^{2n}}{(2n)!}=\frac{1}{2}(e^{\sum t(x)}+e^{-\sum t(x)}-2)<\infty\]
 The last inequality comes from $\sum\limits_{x\in S_N(\mu)}t(x)<\infty$ as $\mu$ is locally complex.
\end{proof}
\begin{corollary}
There is a unique function $k\in BV[0,\infty)$ which is right continuous such that 
\[k(x)=1-\int_0^xd\omega k\]
As a consequence, the collection of all jump points of $k$ is exactly $S_N(\mu)$.    
\end{corollary}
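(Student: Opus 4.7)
The plan is to invoke the Persson-type existence-and-uniqueness theorem (cited from \cite{2} in Section~2) with scalar data: take $A=\omega$, zero forcing measure, and initial value $C=1$. The only hypothesis that requires checking is the invertibility of $A\{x\}+I=1+\omega\{x\}$. From the definition of $\omega$,
\[1+\omega\{x\}=\tfrac12\bigl(e^{t(x)}+e^{-t(x)}\bigr)=\cosh t(x)\ge 1\]
whenever $x\in S_N(\mu)$, and $1+\omega\{x\}=1$ otherwise, so this quantity is always at least $1$. (Persson's theorem as quoted is for $2\times 2$ matrix measures, but the scalar case is either a direct specialization or can be reduced to it by embedding into a diagonal $2\times 2$ system.) This produces a unique right-continuous solution $k$ of locally bounded variation on $[0,\infty)$; because $\omega$ is supported in $[0,N]$, $k$ is in fact constant for $x>N$, so $k\in BV[0,\infty)$.

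For the jump structure I would read off the jump formula directly from the integral equation: comparing the values at $x$ and $x-$, and using the convention $\int_0^x=\int_{(0,x]}$,
\[k(x)-k(x-)=-\int_{\{x\}}d\omega\,k=-\omega\{x\}\,k(x),\qquad\text{i.e.}\qquad k(x-)=\bigl(1+\omega\{x\}\bigr)\,k(x).\]
At each $x\notin S_N(\mu)$ one has $\omega\{x\}=0$, so $k$ is continuous there. At $x\in S_N(\mu)$ the jump is $-\omega\{x\}k(x)$ with $\omega\{x\}>0$, and what is left is to show that $k(x)\ne 0$.

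Nonvanishing follows by iterating the jump relation. Plugging $x=0$ into the integral equation gives $k(0)=1$; between consecutive points of $S_N(\mu)$ the function $k$ is constant; and across each $x_n\in S_N(\mu)$ the relation becomes $k(x_n)=k(x_n-)/(1+\omega\{x_n\})$. Iterating,
\[k(x)=\prod_{x_n\le x,\ x_n\in S_N(\mu)}\frac{1}{1+\omega\{x_n\}},\]
and this product converges to a strictly positive limit because $\sum_n\omega\{x_n\}=|\omega|([0,N])<\infty$ by the preceding lemma. Hence $k$ is nowhere zero on $[0,\infty)$, so the jump at each $x\in S_N(\mu)$ is genuinely nonzero, and the jump set of $k$ coincides exactly with $S_N(\mu)$. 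The only genuine obstacle here is this nonvanishing, which could fail if the jump points of $\mu$ accumulated in a pathological way; the finiteness of $|\omega|(\mathbb{R})$ from the preceding lemma is exactly what rules this out, converting a potentially problematic infinite product into a strictly positive one.
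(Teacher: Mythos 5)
Your proof is correct, and it takes a genuinely more explicit route than the paper. The paper's proof of this corollary is minimal: it verifies that $1+\omega\{x\}=\cosh t(x)\neq 0$ and invokes Persson's theorem (embedding scalar data as a diagonal $2\times 2$ system, exactly as you describe), but it does \emph{not} itself justify the ``as a consequence'' sentence --- the nonvanishing of $k$, which is needed to see that the jump $k(x)-k(x-)=-\omega\{x\}k(x)$ is genuinely nonzero at each $x\in S_N(\mu)$, is deferred to the next lemma, where it is proved indirectly (if $k(x_0)=0$ then the integral equation restarted at $x_0$ forces $k\equiv 0$, contradicting $k(0)=1$), together with a compactness argument. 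You instead write down the explicit product-integral solution
\[k(x)=\prod_{x_n\leq x,\ x_n\in S_N(\mu)}\frac{1}{1+\omega\{x_n\}},\]
whose positivity follows at once from $\omega\{x_n\}=\cosh t(x_n)-1>0$ and $\sum_n\omega\{x_n\}=|\omega|([0,N])<\infty$. This is more constructive and proves both halves of the corollary in one pass. One small caveat: your phrase ``between consecutive points of $S_N(\mu)$'' presupposes the jump set is discrete, which need not hold (the mass points can accumulate); but this is a cosmetic issue, since the product formula itself can be verified directly against the integral equation (checking finite truncations and passing to the limit, using $\sum\omega\{x_n\}<\infty$), and your final appeal to the convergence of the infinite product is exactly what handles the accumulation scenario --- as you yourself note at the end.
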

\begin{proof}
we have for $x\in S_N(\mu)$,
\[\omega\{x\}+1=\frac{e^{t(x)}+e^{-t(x)}}{2}\neq 0\]
It follows from Jan Persson's theorem (Theorem \ref{2.1}) with a diagonal $2\times 2$ matrix with entries $k$. One can also invoke \cite{2} for the one-dimensional case.
\end{proof}
The following lemma is essential.
\begin{lemma}
There are $\epsilon,M>0$ such that $\epsilon\leq |k|\leq M$.    
\end{lemma}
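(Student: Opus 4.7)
The plan is to exploit the purely atomic nature of $\omega$: since $\omega$ is concentrated on the countable set $S_N(\mu)$, the solution $k$ must be piecewise constant between the jumps, and the entire behavior of $k$ can be read off from its values at those jumps. This converts the integral equation into a question about the convergence of an infinite product, which is essentially already packaged into the previous lemma.

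At any jump point $x \in S_N(\mu)$, the equation $k(x) = 1 - \int_0^x d\omega\, k$ gives $k(x) = k(x-) - \omega\{x\}\, k(x)$, i.e.
\[
k(x) = \frac{k(x-)}{1 + \omega\{x\}} = \frac{k(x-)}{\cosh(t(x))},
\]
because $1 + \omega\{x\} = (e^{t(x)} + e^{-t(x)})/2 = \cosh(t(x))$. Starting from $k(0) = 1$ and iterating through the (at most countably many) jumps in $[0,x]$ yields the explicit formula
\[
k(x) = \prod_{y \in S_N(\mu),\, y \leq x} \frac{1}{\cosh(t(y))}.
\]
This candidate is right-continuous, BV on $[0,N]$, and satisfies the integral equation by construction; the uniqueness half of Persson's theorem (already invoked in the preceding corollary) forces it to agree with $k$.

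For the bounds, each factor lies in $(0,1]$ since $\cosh \geq 1$, so in particular $k$ is real and positive with $|k| = k \leq 1$, and $M = 1$ works. For the lower bound I need the product to converge to a positive number, which is equivalent to showing $\sum_{y \in S_N(\mu)} \log \cosh(t(y)) < \infty$. Using $\log(1+u) \leq u$ for $u \geq 0$,
\[
\sum_{y \in S_N(\mu)} \log \cosh(t(y)) \;\leq\; \sum_{y \in S_N(\mu)} \bigl(\cosh(t(y)) - 1\bigr) \;=\; |\omega|(\mathbb{R}) \;<\; \infty,
\]
where the equality is exactly the computation carried out in the preceding lemma. Hence $\epsilon := k(N) = \prod_{y \in S_N(\mu)} \cosh(t(y))^{-1} > 0$, and since $k$ is non-increasing along the jumps, $|k(x)| \geq \epsilon$ for all $x$.

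Strictly speaking no step here is hard: the main conceptual point is just to recognize that the problem reduces to the convergence of $\prod \cosh(t(y))$, and that convergence is already encoded in the finiteness of $|\omega|(\mathbb{R})$ proved earlier. The only place that requires a small amount of care is checking that the piecewise-constant product formula actually solves the integral equation in the right-continuous convention used throughout, but this is immediate from the way the jumps were recorded.
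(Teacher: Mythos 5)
Your proof is correct, but it takes a genuinely different and more explicit route than the paper's. The paper argues softly: it first shows $k(x)\neq 0$ everywhere by a uniqueness argument (if $k(x_0)=0$ then $k$ would solve the homogeneous equation past $x_0$, forcing $k\equiv 0$, contradicting $k(0)=1$), and then rules out $\inf|k|=0$ by a compactness argument, extracting a convergent subsequence $x_n\to x_0$ and using the left- and right-continuity behavior from Lemma \ref{2.1} to conclude $k(x_0)=0$. You instead exploit that $\omega$ is purely atomic to produce the explicit solution $k(x)=\prod_{y\in S_N(\mu),\,y\le x}\cosh(t(y))^{-1}$, which immediately yields that $k$ is real, positive, non-increasing, $M=1$, and $\epsilon=k(N)>0$ from the convergence of the infinite product (which you correctly tie back to the finiteness of $|\omega|(\mathbb{R})$). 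Your approach buys more information about $k$ and makes the bound constructive; the paper's approach is a bit more robust in that it never leans on the atomic structure directly. One small caveat: the phrase ``iterating through the jumps'' is slightly glib when $S_N(\mu)$ is countable and possibly dense in $[0,N]$, since there is no jump-by-jump ordering; the clean way to verify the integral equation for the candidate product is a telescoping identity over finite subsets of $S_N(\mu)$ followed by a dominated-convergence passage to the limit (dominated by $\omega\{y\}$). You do flag this as the one place needing care, so the gap is more a matter of exposition than of substance, but it would be worth spelling out the finite-approximation argument rather than declaring it immediate.
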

\begin{proof}
 Since $k\in BV[0,\infty)$ is right continuous, the existence of $M$ is trivial. On the other hand, we claim that $k(x)\neq 0$. Indeed, if $k(x_0)=0$ for some $x_0$, then we have $0=1-\int_0^{x_0}d\omega k$, hence the equation
\[k(x)=(1-\int_0^{x_0}d\omega k)-\int_{x_0}^xd\omega k=-\int_{x_0}^xd\omega k, x\geq x_0\]
\[k(x)=\int^{x_0}_xd\omega k, x<x_0\]
has a unique solution which is still $k(x)$; however, 0 is the solution of this equation, which means $k=0$. This contradicts with $k(0)=1$.
\par
Recall that $k(x)=\frac{k(x-)}{1+\omega\{x\}}$; moreover, we also have that $k(x)=1$ for $x<0$ and $k(x)=k(N)$ for $x>N$. If $\inf\limits_{x\in [0,N]}|k|=0$, then there is a sequence $\{x_n\}_{n=1}^\infty \subset[0,N]$ such that for any $n\in \mathbb{N}^+$,
\[|k(x_n)|<\frac{1}{n}\]
Since $[0,N]$ is compact, there is a convergent subsequence of $\{x_n\}_{n=1}^\infty$, still denoted by $\{x_n\}_{n=1}^\infty$. In other words, $\lim\limits_{n\to \infty}x_n=x_0\in [0,N]$.
\par
If $x_n$ approximates to $x_0$ from the right-hand side, i.e., there is a subsequence $\{x_{n_k}\}_{k=1}^\infty$ such that $x_0<x_{n_k}$ and $\lim\limits_{k\to \infty}x_{n_k}=x_0$, then by the existence of the right limit of a function of bounded variation, it follows that 
\[|k(x_0)|=|k(x_0+)|=\lim\limits_{k\to \infty}|k(x_{n_k})|\leq \lim\limits_{k\to \infty}\frac{1}{n_k}=0\]
If $x_n$ can approximate to $x_0$ from the left-hand side, then we still have such a subsequence, hence 
\[|k(x_0)|=\frac{|k(x_0-)|}{1+\omega\{x_0\}}=\frac{\lim\limits_{k\to \infty}|k(x_{n_k})|}{1+\omega\{x_0\}}\leq \lim\limits_{k\to \infty}\frac{1}{n_k(1+\omega\{x_0\})}=0\]
In all, we conclude that $k(x_0)=0$, which is a contradiction. Since $\inf\limits_{x\in [0,N]}|k|>0$, we simply say $\epsilon=\inf\limits_{x\in [0,N]}|k|$.
\end{proof}
Assume $y$ is the solution of the equation 
\[Jy(x)-\int_{0}^{x}g(\mu \{s\}J)d\mu y=C-z\int_0^xydt\]
We define a function
\[f:=\frac{1}{k}Q^{-1}y\]
where $Q(x,z)=\begin{pmatrix} 
e^{izx} & e^{-izx}\\
-ie^{izx} & ie^{-izx} \\
\end{pmatrix}$.
\begin{lemma}
\label{3.3}
On the interval $[0,N]$, $f$ satisfies 
\begin{equation}
    f(x)=\frac{1}{2}\begin{pmatrix} 
-i & 1\\
i & 1\\
\end{pmatrix}C+\int_0^x\begin{pmatrix} 
					0 & e^{-2izs}dP(s)\\
					e^{2izs}d\overline{P}(s) & 0\\
\end{pmatrix}f(s)
\end{equation}
where $dP(s)=\frac{e^{t(s)}-e^{-t(s)}}{t(s)(e^{t(s)}+e^{-t(s)})}(d\mu_2(s)-id\mu_1(s))$.    
\end{lemma}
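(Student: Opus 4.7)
My plan is to derive a Stieltjes-type integral equation for $f$ directly from the one satisfied by $y$, and then check that it coincides with the target equation of the lemma. The key structural observation is that the columns of $Q$ are free-Dirac plane waves; one verifies $Q'=zJQ$, equivalently $(Q^{-1})'=-zQ^{-1}J$, so conjugation of $y$ by $Q^{-1}$ is designed precisely to cancel its $z$-drift. The initial value matches directly: setting $x=0$ in the integral equation for $y$ gives $Jy(0)=C$, hence $y(0)=-JC$, and since $k(0)=1$,
\[f(0)=Q^{-1}(0)(-JC)=\tfrac{1}{2}\begin{pmatrix}1&i\\1&-i\end{pmatrix}\begin{pmatrix}0&1\\-1&0\end{pmatrix}C=\tfrac{1}{2}\begin{pmatrix}-i&1\\i&1\end{pmatrix}C.\]

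For the differential part, I would apply Stieltjes calculus to $Q^{-1}y$. Using $dy=-Jg(\mu\{s\}J)\,d\mu\,y+zJy\,dx$ from the equation for $y$, the two $zQ^{-1}Jy\,dx$ contributions produced by $(Q^{-1})'$ and $dy$ cancel, leaving the clean identity
\[d(Q^{-1}y)=-Q^{-1}Jg(\mu\{s\}J)\,d\mu\,y.\]
Writing $Q^{-1}y=kf$, I would analyse the continuous and pure-jump parts of $\mu$ separately. At a continuity point of $\mu$, $g(\mu\{s\}J)=I$ and $dk=0$, so $df=-Q^{-1}J\,d\mu\,Qf$; a direct calculation with the explicit $Q,Q^{-1}$ yields
\[-Q^{-1}J\,d\mu\,Q=\begin{pmatrix}0&e^{-2izx}(d\mu_2-id\mu_1)\\ e^{2izx}(d\mu_2+id\mu_1)&0\end{pmatrix}.\]
This is exactly the target matrix on the continuous part of $\mu$, where the prefactor $\tanh(t(s))/t(s)$ collapses to $1$ and $dP$ reduces to $d\mu_2-id\mu_1$.

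At a jump $s\in S(\mu)$, Lemma \ref{2.1} gives $y(s-)=e^{J\mu\{s\}}y(s)$, and the identity $(J\mu\{s\})^2=t(s)^2I$ yields the explicit formula $e^{J\mu\{s\}}=\cosh(t(s))I+\frac{\sinh(t(s))}{t(s)}J\mu\{s\}$. The scalar factor $\cosh(t(s))$ equals $1+\omega\{s\}=k(s-)/k(s)$, which is exactly why $k$ was introduced. A short Stieltjes product-rule manipulation shows that the scalar $\cosh$ contribution of the exponential is absorbed by the jump of $k$, and after dividing by $k(s)$ one arrives at
\[f(s)-f(s-)=-\frac{\tanh(t(s))}{t(s)}\,Q^{-1}(s)J\mu\{s\}Q(s)\,f(s).\]
Applying the same off-diagonal matrix identity as in the continuity case, but with $\mu\{s\}$ in place of $d\mu$, produces precisely the claimed jump $\begin{pmatrix}0&e^{-2izs}P\{s\}\\ e^{2izs}\bar P\{s\}&0\end{pmatrix}f(s)$ with the correct prefactor $\tanh(t(s))/t(s)$ built into $dP$.

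The main obstacle is the careful Stieltjes bookkeeping at points where $y$ and $k$ jump simultaneously: one has to apply the product rule for BV functions, correctly distinguish right and left limits, and verify that the scalar $\cosh(t(s))$ from $e^{J\mu\{s\}}$ is exactly the factor cancelled by the jump of $k$, so that only the off-diagonal, $z$-oscillating piece survives to produce $dP$. Once this cancellation is pinned down, the two explicit matrix computations assemble the continuous and jump contributions into the integral equation stated in the lemma.
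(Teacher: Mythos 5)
Your proposal is correct but takes a genuinely different route from the paper's. Both conjugate $y$ by $kQ$, use $k(s-)/k(s)=1+\omega\{s\}=\cosh(t(s))$, and reduce the off-diagonal algebra to $-Q^{-1}Jd\mu\,Q$; the difference is how the $g(\mu\{s\}J)$ factor is cancelled against the normalization $k$. You split $\mu$ into continuous and atomic parts and treat them separately: on the continuous part $g=I$, $dk=0$, and a direct computation gives $df=-Q^{-1}Jd\mu\,Qf$; at a jump you invoke Lemma~\ref{2.1} in the form $y(s-)=e^{J\mu\{s\}}y(s)$, expand $e^{J\mu\{s\}}=\cosh(t)I+\frac{\sinh t}{t}J\mu\{s\}$ via $(J\mu\{s\})^2=t^2I$, and observe that the scalar $\cosh$ term is exactly absorbed by the jump of $k$, leaving the off-diagonal piece with the $\tanh(t)/t$ prefactor that becomes $dP$. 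The paper instead works with the series form: it proves $Jg(\mu\{x\}J)=\frac{\sinh t}{t}J+\frac{\cosh t-1}{t^2}\mu\{x\}$ (the $g$-counterpart of your exponential identity, since $Dg(D)=e^D-I$), applies integration by parts together with Volpert's chain rule, and then kills the unwanted term globally by showing the matrix-valued measure $dk\cdot I+\frac{\cosh t-1}{t^2}k\mu\{s\}d\mu$ vanishes — one cancellation, no case split. Your route is arguably more transparent at individual jump points, but the assembly you flag as ``the main obstacle'' (the BV product rule for $d(kf)$ when $k$, $y$, and $f$ jump simultaneously, and the reconstitution of the integral equation from its continuous and pure-jump components) is precisely the bookkeeping the paper packages into that measure identity; it is genuine work that needs to be written out, not just acknowledged. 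One small point: Lemma~\ref{2.1} is stated for $f\in D(T_N)$, but you apply it to the $z$-dependent solution $y$; the same one-line argument does carry over since the equation for $y$ makes $Ay$ absolutely continuous, but this should be said explicitly.
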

To prove this lemma, we introduce a generalized version of the chain rule due to Volpert. For our purpose, we only need the following weaker version, and for readers who are interested in a global version, please see \cite{14}.
\begin{theorem}[Volpert's chain rule]
  Let $I\subset \mathbb{R}$ be an open interval, $f:\mathbb{R}^m\rightarrow\mathbb{R}^n$ continuously differentiable, $u: I\to \mathbb{R}^m$ is of bounded variation, and $S$ the set of all jump points of $u$ defined as the set of all $x\in I$ where the approximate
limit $\Tilde{u}$ does not exist at $x$. Then
\[d(f(u))=du\cdot df(\Tilde{u})\]
in the sense of measures on $I\setminus S$, where $d$ is the distributional derivative.  
\end{theorem}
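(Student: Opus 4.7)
The plan is to bootstrap the chain rule from polynomials to arbitrary $C^1$ functions by a $C^1$-approximation argument, using the Leibniz rule for BV functions as the basic building block. First I would localize: for any compact $K \subset I$, the image $u(K)$ is contained in a closed ball $B \subset \mathbb{R}^m$ because $u \in BV(I; \mathbb{R}^m)$ is locally bounded. By Stone--Weierstrass applied componentwise (after a mollification of $f$ on a neighborhood of $B$), I obtain polynomials $p_k : \mathbb{R}^m \to \mathbb{R}^n$ with $\|f - p_k\|_{C^1(B)} \to 0$. So it suffices to prove the formula for polynomials and pass to the limit.

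For the polynomial case I would invoke the Leibniz rule for scalar BV functions, $d(uv) = u_-\, dv + v_+\, du$, which is the differential form of the integration-by-parts identity recorded earlier in the paper. Let $S$ denote the union of the jump sets of the scalar components $u_i$; on $I \setminus S$ both one-sided limits of each $u_i$ coincide with the approximate value $\tilde{u}_i$, so the two versions of Leibniz collapse to $d(u_i u_j) = \tilde{u}_i\, du_j + \tilde{u}_j\, du_i$. Iterating this on monomials and taking linear combinations gives, for every polynomial $p$,
\[
d(p \circ u) = \sum_{i=1}^m (\partial_i p)(\tilde{u})\, du_i = dp(\tilde{u}) \cdot du \qquad \textrm{on } I \setminus S.
\]

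For the approximation step, $p_k \to f$ uniformly on $B$ yields $p_k \circ u \to f \circ u$ uniformly on $K$. The bound $|d(p_k \circ u)| \le \mathrm{Lip}(p_k|_B)\,|du|$ passes to the limit to give $|d(f \circ u)| \le \mathrm{Lip}(f|_B)\,|du|$, and it implies weak-$*$ convergence of $d(p_k \circ u)$ to $d(f \circ u)$ as Radon measures on $K$. Simultaneously, $dp_k(\tilde{u}) \to df(\tilde{u})$ uniformly on the set where $\tilde{u} \in B$, so $dp_k(\tilde{u}) \cdot du \to df(\tilde{u}) \cdot du$ in total variation on every compact subset of $K \setminus S$. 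Equating these two limits yields $d(f \circ u) = df(\tilde{u}) \cdot du$ on $I \setminus S$.

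The main obstacle I anticipate is the measure-theoretic step: upgrading the two convergences to an actual equality of Radon measures on $I \setminus S$, as opposed to an equality only in the sense of distributions on all of $I$. The source of the difficulty is that the jump part of $d(p_k \circ u)$ depends on $p_k$ through values like $p_k(u(x+))$ and $p_k(u(x-))$, and one must verify that outside $S$ no such jump contributions survive in the limit; the uniform $\mathrm{Lip}$ bound above, combined with the fact that $S$ carries no mass from the diffuse part of $du$, is what makes this work. A subsidiary conceptual point that must be handled carefully is the identification of $S$ both as the union of the componentwise jump sets of $u$ and as the failure locus of $\tilde{u}$, since it is the coincidence $u_- = u_+ = \tilde{u}$ on $I \setminus S$ that collapses the asymmetric Leibniz rule to the symmetric form used above.
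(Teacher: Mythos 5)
The paper does not prove this statement: it is quoted as a known result (Volpert's chain rule), with \cite{14} (Ambrosio--Dal Maso) and \cite{20} cited for the general version. So there is no ``paper's own proof'' to compare against; your proposal is a reconstruction from scratch, and I will evaluate it on its own terms.

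The overall route (prove the identity for polynomials via iterated Leibniz, then pass to the limit along a $C^1$-approximating sequence) is sound, and your observation that on $I\setminus S$ the one-sided limits coincide with $\tilde u$ and collapse the asymmetric Leibniz rule to the symmetric one is exactly right. The genuine gap is in the limiting step, and it is not fully closed by the heuristic you offer. You establish (a) weak-$*$ convergence $d(p_k\circ u)\rightharpoonup d(f\circ u)$ on $K$, and (b) total-variation convergence of the restrictions $d(p_k\circ u)\restriction_{K\setminus S} = dp_k(\tilde u)\cdot du$ to $df(\tilde u)\cdot du$ on $K\setminus S$. These two facts alone do not equate the restriction of the weak-$*$ limit to $K\setminus S$ with the total-variation limit: restricting to a Borel set is not weak-$*$ continuous, so something must control the piece supported on $S$. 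Your remark that ``$S$ carries no mass from the diffuse part of $du$'' is true but is not the issue; the measures $d(p_k\circ u)$ do have mass on $S$ (their jump parts live exactly there), and you must control how that mass behaves in the limit, not argue it is absent.

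The fix is available within your own framework, and it makes the weak-$*$ step unnecessary. Using the $C^1$ (not merely $C^0$) bound on $p_k-f$, the mean value theorem on the segment from $u(x-)$ to $u(x+)$ gives
\[
\sum_{x\in S\cap K}\bigl|\,[p_k(u(x+))-p_k(u(x-))]-[f(u(x+))-f(u(x-))]\,\bigr|
\le \|\nabla(p_k-f)\|_{C^0(B)}\,\mathrm{Var}(u;K)\longrightarrow 0,
\]
so the jump parts $d(p_k\circ u)\restriction_S$ also converge in total variation. Combined with the TV convergence of the diffuse parts that you already have, the full measures $d(p_k\circ u)$ converge in total variation on $K$; since $p_k\circ u\to f\circ u$ uniformly, the TV limit must be $d(f\circ u)$. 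Restricting a TV limit to any Borel set is legitimate, and restricting to $K\setminus S$ gives exactly $d(f\circ u)\restriction_{K\setminus S}=df(\tilde u)\cdot du\restriction_{K\setminus S}$, which is the claim. Two smaller points worth stating explicitly if you flesh this out: (i) density of polynomials in $C^1(B)$ is not a direct consequence of Stone--Weierstrass but does hold (e.g., by mollifying and then using a polynomial kernel, or Bernstein-type approximation), and (ii) the ball $B$ should be taken convex and slightly larger than $u(K)$ so that the mean value theorem above applies along the line segments between one-sided limits.
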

Now we can prove Lemma \ref{3.3}.
\begin{proof}
First of all, we claim that 
\begin{equation}
    Jg(\mu\{x\}J)=\frac{e^{t(x)}-e^{-t(x)}}{2t(x)}J+\frac{e^{t(x)}+e^{-t(x)}-2}{2t^2(x)}\mu\{x\}
    \end{equation}
here, coefficients should be interpreted as limits, i.e., when $t(x)=0$, then $\frac{e^{t(x)}-e^{-t(x)}}{2t(x)}=1$ and $\frac{e^{t(x)}+e^{-t(x)}-2}{2t^2(x)}=\frac{1}{2}$.
\par
Indeed, if $t(x)=0$, then $\mu\{x\}=0$, which implies $Jg(0)=J$. If $t(x)\neq0$, and notice that $(\mu\{x\}J)^2=t^2(x)I$, we have
\[g(\mu J)=\sum\limits_{n=1}^\infty\frac{(\mu J)^{n-1}}{n!}\\
=\frac{e^t-e^{-t}}{2t}I+\frac{e^t+e^{-t}-2}{2t^2}\mu J\]
Observe that $J\mu J=\mu$, thus we get the desired identity.
\par
Now, we fix $z\in \mathbb{C}$. Since $f$ is of bounded variation on $[0,N]$ and right continuous, and $y(0-)=y(0)$ as $\mu\{0\}=0$, we have
\begin{equation}
\int_{[0,x]}d(kQ)f=k(x)Q(x)f(x)-y(0)-\int_{[0,x]}k(s-)Q(s)df(s)    
\end{equation}
by integration by parts on $[0,N]$.
\par
By Volpert's chain rule, we have 
\begin{align*}
\int_{[0,x]}d(kQ)f&=\int_{[0,x]\setminus S_N(\mu)}d(kQ)f+\int_{[0,x]\cap S_N(\mu) }d(kQ)f\\
&=\int_{[0,x]\setminus S_N(\mu)}d(k)Qf+\int_{[0,x]\setminus S_N(\mu)}k(dQ)f+\int_{[0,x]\cap S_N(\mu) }d(kQ)f
\end{align*}
Observe that the second term
\[\int_{[0,x]\setminus S_N(\mu)}k(dQ)f=zJ\int_{[0,x]\setminus S_N(\mu)}kQfds=zJ\int_0^xkQfds\]
and the third term
\[\int_{[0,x]\cap S_N(\mu)}d(kQ)f=\sum\limits_{s\in [0,x]\cap S_N(\mu)}(k(s)-k(s-))Q(s)f(s)=\int_{[0,x]\cap S_N(\mu) }d(k)Qf\]
Hence, it follows that 
\begin{equation}
\int_{[0,x]}d(kQ)f=\int_{[0,x]}d(k)Qf+zJ\int_0^xkQfds    
\end{equation}
Moreover, by the definition of $y$, we get
\begin{equation}
k(x)Q(x)f(x)-y(0)=-J(\int_{0}^{x}kg(\mu J)d\mu Qf-z\int_0^xkQfdt)    
\end{equation}
Combining (6)-(9), we get
\begin{align*}
-\int_0^xkJg(\mu J)d\mu Qf
&=\int_{[0,x]}d(k)Qf+\int_{[0,x]}k(s-)Q(s)df(s)\\
&=-\int_0^x\frac{e^{t(s)}-e^{-t(s)}}{2t(s)}kJd\mu Qf-\int_0^x\frac{e^{t(s)}+e^{-t(s)}-2}{2t^2(s)}k\mu\{s\}d\mu Qf
\end{align*}
Let's investigate the measure $dk\cdot I+\frac{e^{t(s)}+e^{-t(s)}-2}{2t^2(s)}k\mu\{s\}d\mu$.
\par
We have
\begin{align*}
&\int_0^x(dk\cdot I+\frac{e^{t(s)}+e^{-t(s)}-2}{2t^2(s)}k\mu\{s\}d\mu)\\&=(k(x)-1)I+\sum\limits_{s\in [0,x]\cap S_N(\mu)}k(s)\frac{e^{t(s)}+e^{-t(s)}-2}{2}I\\
&=(k(x)-1+\int_0^xd\omega k)I=0
\end{align*}
Also, notice that $k$ and $f$ are continuous at 0, hence we have 
\[\int_{[0,x]}d(k)Qf=-\int_0^x\frac{e^{t(s)}+e^{-t(s)}-2}{2t^2(s)}k\mu\{s\}d\mu Qf\]
Thus
\[\int_{[0,x]}k(s-)Q(s)df(s)=-\int_0^x\frac{e^{t(s)}-e^{-t(s)}}{2t(s)}kJd\mu Qf\]
Recall $k(x-)=(1+\omega\{x\})k(x)$, we get
\[f(x)-f(0)=\int_0^xdf=-\int_0^x\frac{e^{t(s)}-e^{-t(s)}}{t(s)(e^{t(s)}+e^{-t(s)})}(Q^{-1}Jd\mu Q)f\]
This one, with $f(0)=\frac{1}{2}\begin{pmatrix} 
-i & 1\\
i & 1\\
\end{pmatrix}C$, gives the desired equation.
\end{proof}
Thanks to Lemma \ref{3.3}, we can focus on equation (6) rather than the original equation (3). For convenience, we still use $\mu$ to denote the truncation of $\mu$ on $[0,N]$, i.e., $\mu\in DS(N)$. The dB function of $B(E_\delta)$ can be written as 
\[E_\delta(z)=2k(\delta)e^{-iz\delta}f_2(\delta,z)\]
where $f$ is from Lemma \ref{3.3} with $f(0,z)=\frac{1}{2}\begin{pmatrix} 
					1\\
					1\\
\end{pmatrix}$ and $\delta\in(0,N]$.
\par
\begin{lemma}
\label{l3.4}
 There is $\delta \in (0,N]$ such that $B(E_\delta(z))=PW_\delta$ as sets. 
\end{lemma}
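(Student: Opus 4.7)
The plan is to choose $\delta$ so small that equation (5) can be solved by a Picard iteration whose convergence is uniform for $z\in\mathbb{R}$. Then $f_2(\delta,z)$ stays close to the constant $\tfrac12$ on the real line, forcing $|E_\delta(t)|$ to be bounded above and below by positive constants on $\mathbb{R}$; combined with $E_\delta$ having exponential type exactly $\delta$, this will identify $B(E_\delta)$ with $PW_\delta=B(e^{-iz\delta})$ as sets.

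For the iteration, I would rewrite (5) as $f=f_0+\mathcal{K}_z f$, where $f_0(x)\equiv\tfrac12(1,1)^T$ and $\mathcal{K}_z$ has the $2\times2$ off-diagonal kernel with entries $e^{-2izs}dP(s)$ and $e^{2izs}d\overline{P}(s)$. The essential observation is that for $z\in\mathbb{R}$ we have $|e^{\pm 2izs}|=1$, so $\mathcal{K}_z$ is dominated, uniformly in $z$, by the total variation $|dP|$. Standard Picard iteration gives
\[\bigl\|f_n(\delta,z)\bigr\|\le\frac{V(\delta)^n}{n!}\|f_0\|,\qquad V(\delta):=\int_0^\delta|dP|,\]
so the Neumann series converges uniformly in $z\in\mathbb{R}$ to $f(\delta,z)=f_0+O(V(\delta))$. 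Since $V(\delta)\to 0$ as $\delta\to 0^+$, shrinking $\delta$ yields $\tfrac14\le|f_2(\delta,t)|\le\tfrac34$ for all $t\in\mathbb{R}$. Combining this with $E_\delta(z)=2k(\delta)e^{-iz\delta}f_2(\delta,z)$ and the earlier bound $\epsilon\le|k|\le M$ produces positive constants $c_1,c_2$ with $c_1\le|E_\delta(t)|^2\le c_2$ on $\mathbb{R}$. By Theorem~\ref{2.2} the Dirac operator on $[0,\delta]$ corresponds to some $H\in CD$ with $\det H\equiv 1$, and the type formula recalled in Section~3.1 gives $\tau(E_\delta)=\int_0^\delta\sqrt{\det H}=\delta$. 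Since $PW_\delta=B(e^{-iz\delta})$ with $|e^{-it\delta}|\equiv 1$, the two de~Branges functions have the same exponential type and boundary moduli comparable on $\mathbb{R}$; hence for any entire $F$ the Hardy-space conditions $F/E_\delta,\,F^{\#}/E_\delta\in H^2$ reduce to the ones defining $PW_\delta$, and the spaces coincide as sets.

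The main obstacle is the uniform-in-$z$ convergence of the Picard iteration: off the real axis the kernel entries pick up a factor $e^{2\delta|\mathrm{Im}\,z|}$, so the cancellation $|e^{\pm 2izs}|=1$ that keeps the Neumann series uniformly summable is only available on $\mathbb{R}$. This is exactly what forces the restriction to small $\delta$, since we need $V(\delta)$ small enough to absorb the Picard constants---a condition that will not hold at arbitrary $\delta\in(0,N]$ in general.
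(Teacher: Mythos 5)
Your overall strategy (Picard iteration on a small interval where the relevant total variation is small, then conclude $B(E_\delta)=B(e^{-iz\delta})=PW_\delta$ from size estimates on $E_\delta$) is the same as the paper's, but there are two gaps, one minor and one essential.

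\textbf{Minor: the $V(\delta)^n/n!$ estimate.} The $1/n!$ gain in the iterated integral comes from the simplex $\{t_1>t_2>\cdots>t_n\}$ occupying a $1/n!$ fraction of $[0,\delta]^n$ up to measure zero. This symmetry argument only works when the iterating measure is non-atomic. For general $\mu\in DS$ the measure $dP$ can have atoms, and the product measure then charges the diagonals $\{t_i=t_j\}$, so the best general bound on $\int_0^\delta\int_0^{t_1}\cdots\int_0^{t_{n-1}}d|P|$ is $V(\delta)^n$, not $V(\delta)^n/n!$. This is correctable: the paper instead arranges $\lVert\mu\rVert((0,\delta])<\tfrac18$ so that $V(\delta)\le\tfrac14$ and $\sum V(\delta)^n$ still converges. (The $1/n!$ improvement is explicitly reserved, later in the section, for the case $d\mu\ll dt$.)

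\textbf{Essential: the jump from real-line bounds to equality of dB spaces.} You show $|E_\delta(t)|\asymp 1$ on $\mathbb{R}$ and note that $E_\delta$ and $e^{-iz\delta}$ have the same exponential type, and then assert the Hardy-space conditions ``reduce'' to those defining $PW_\delta$. That reduction is not automatic. To conclude $B(E_\delta)=B(e^{-iz\delta})$ as sets you need $E_\delta(z)e^{iz\delta}$ to be bounded above and below on the \emph{closed upper half-plane}, not just on $\mathbb{R}$; this is what guarantees $F/E_\delta\in H^2\Leftrightarrow Fe^{iz\delta}\in H^2$ simultaneously for $F$ and $F^{\#}$. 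You even point out that you lose a factor $e^{2\delta|\operatorname{Im}z|}$ off the axis and flag this as ``the main obstacle'' --- but leaving it unresolved means the proof is incomplete. Phrasing it as ``same type plus comparable moduli on $\mathbb{R}$'' is a claim one could support via mean-type arguments for de Branges functions, but that machinery is not invoked, and the paper deliberately takes a cleaner route.

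The key observation the paper uses, which you miss, is that the Picard iteration produces more than a numerical bound: each $v_{n,2}(\delta,z)=\tfrac12\int_{\mathbb{R}}e^{izs}d\mu_n(s)$ for a complex measure $\mu_n$ supported on $[0,2\delta]$ with $|\mu_n|(\mathbb{R})\le 4^{-n}$ (obtained by pushing forward $\overline{P}\times P\times\cdots$ under $(t_1,\ldots,t_n)\mapsto(2\sum(-1)^{j+1}t_j,t_2,\ldots,t_n)$). Summing gives
\[
E_\delta(z)=k(\delta)e^{-iz\delta}\bigl(1+\widehat{\rho}(z)\bigr),\qquad \rho\in\mathcal{M}^b(\mathbb{R}),\ \operatorname{supp}\rho\subset[0,2\delta],\ |\rho|(\mathbb{R})\le\tfrac12.
\]
Because $\rho$ lives on $[0,\infty)$, for every $z$ in $\overline{\mathbb{C}^+}$ one has $|e^{izs}|\le1$ when $s\ge 0$, and therefore $|\widehat{\rho}(z)|\le|\rho|(\mathbb{R})\le\tfrac12$ on all of $\overline{\mathbb{C}^+}$ at no extra cost --- precisely the half-plane bound you were missing. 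This yields
\[
\tfrac{|k(\delta)|}{2}|e^{-iz\delta}|\le|E_\delta(z)|\le\tfrac{3|k(\delta)|}{2}|e^{-iz\delta}|\quad\text{on }\overline{\mathbb{C}^+},
\]
and the set equality $B(E_\delta)=B(e^{-iz\delta})=PW_\delta$ follows directly, without any appeal to exponential type. Your argument can be repaired by replacing the pointwise Picard estimate with this measure representation and its support property; the rest of your narrative then goes through.
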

\begin{proof}
We focus on a small interval $[0,\delta]$ with $||\mu||((0,\delta]):=\max\limits_{i=1,2}(|\mu_i|((0,\delta]))<\frac{1}{8}$. By applying iteration on $[0,\delta]$ by putting
\[f_0=\frac{1}{2}\begin{pmatrix} 
					1\\
					1\\
\end{pmatrix},\ f_{n+1}=\frac{1}{2}\begin{pmatrix} 
					1\\
					1\\
\end{pmatrix}+\int_0^x\begin{pmatrix} 
					0 & e^{-2izs}dP(s)\\
					e^{2izs}d\overline{P}(s) & 0\\
\end{pmatrix}f_n(s)\]
we conclude that  the solution can be written as 
\[f(x,z)=\frac{1}{2}\begin{pmatrix} 
					1\\
					1\\
\end{pmatrix}+\sum_{n\geq1}v_n(x,z)\]
where
\begin{equation*}
v_n(x,z)=\left\{\begin{array}{rcl}
\frac{1}{2}\int_0^x\int_0^{t_1}\cdots\int_0^{t_{n-1}}\begin{pmatrix} 			dP(t_1)d\overline{P}(t_2)\cdots dP(t_n)e^{-2iz(t_1-t_2+t_3-\cdots+t_n)}\\
d\overline{P}(t_1)dP(t_2)\cdots d\overline{P}(t_n)e^{2iz(t_1-t_2+t_3-\cdots+t_n)}\\
\end{pmatrix}\\
\frac{1}{2}\int_0^x\int_0^{t_1}\cdots\int_0^{t_{n-1}}\begin{pmatrix} 		d\overline{P}(t_1)dP(t_2)\cdots d\overline{P}(t_n)e^{-2iz(t_1-t_2+t_3-\cdots-t_n)}\\
dP(t_1)d\overline{P}(t_2)\cdots dP(t_n)e^{2iz(t_1-t_2+t_3-\cdots-t_n)}\\
\end{pmatrix}
\end{array}\right.,
\end{equation*}
and the first (second) branch is for odd (even) $n$.
\par
Indeed, we denote $v_n$ by $\begin{pmatrix} 
					v_{n,1}\\
					v_{n,2}\\
\end{pmatrix}$. Since $v_{n,2}=v^{\#}_{n,1}$, it's enough to check the convergence of $\sum\limits_{n\geq1}v_{n,2}(x,z)$. Notice that $0<t_1-t_2+t_3-\cdots\pm t_n\leq\delta$ and $\frac{e^{t(s)}-e^{-t(s)}}{t(x)(e^{t(s)}+e^{-t(s)})}\leq 1$ for $t\geq 0$, it follows that  
\begin{align*}
|v_{n,2}|&\leq 
\frac{e^{2|z|N}}{2}\int_0^\delta\int_0^{t_1}\cdots\int_0^{t_{n-1}}d|P|(t_n)d|P|(t_{n-1})\cdots d|P|(t_1)\\&\leq\frac{e^{2|z|N}}{2}(\int_0^\delta d|P|(t))^n\leq\frac{e^{2|z|N}}{2}\cdot\frac{1}{4^n}
\end{align*}
hence we conclude that $\sum\limits_{n\geq1}v_n(x,z)$ converges uniformly on $[0,\delta]$ and on a compact set of $\mathbb{C}$.
\par
Let $s=2(t_1-t_2+t_3-\cdots\pm t_n)$. We just consider $v_{n,2}(x,z)$ when $n$ is odd. Write 
\[\Delta=\{(t_1,t_2,\cdots,t_n): 0< t_n\leq t_{n-1}\leq \cdots \leq t_1\leq \delta\}\]
We have
\[v_{n,2}(\delta,z)=\frac{1}{2}\int_{\mathbb{R}^n}d(\overline{P}\times P\cdots\times\overline{P})(t)e^{2iz(t_1-t_2+t_3-\cdots+t_n)}\chi_\Delta(t)\]
where $t=(t_1,t_2,\cdots,t_n)$.
\\
The transform 
\[T: \mathbb{R}^n\to \mathbb{R}^n, (t_1,t_2,\cdots,t_n)\mapsto (s,t_2,\cdots,t_n)\]
gives the pushforward of measure $\overline{P}\times P\cdots\times\overline{P}$,denoted by $T_*(\overline{P}\times P\cdots\times\overline{P})$, namely,
\begin{equation}
    v_{n,2}(\delta,z)=\frac{1}{2}\int_{\mathbb{R}^n}d(T_*(\overline{P}\times P\cdots\times\overline{P}))(x)e^{izs}\chi_{T(\Delta)}(x)
 \end{equation}   
where $x=(s,t_2,\cdots,t_n)$.
\par
Let $\partial T(\Delta)(s)$ be the truncation of $T(\Delta)$ at $s$, i.e., the collection of all points in $T(\Delta)$ such that the first coordinate is $s$. We define a measure on $\mathbb{R}$ by 
\[\mu_n(A):=\int_A\chi_{[0,2\delta]}(s)\int_{\partial T(\Delta)(s)}d(T_*(\overline{P}\times P\cdots\times\overline{P}))(s,t_2,\cdots,t_n)\]
This measure is compactly supported on $[0,2\delta]$ and $\mu_n\in \mathcal{M}^b(\mathbb{R})$, actually, we have 
\begin{align*}
|\mu_n|(\mathbb{R})&\leq 
\int_0^\delta\int_0^{t_1}\cdots\int_0^{t_{n-1}}d|P|(t_n)d|P|(t_{n-1})\cdots d|P|(t_1)\leq\frac{1}{4^n}
\end{align*}
Moreover, (10) can be written as 
\[v_{n,2}(\delta,z)=\frac{1}{2}\int_{\mathbb{R}}e^{izs}d\mu_n(s)\]
\par
This conclusion is also true if $n$ is even. The total variation of $\mu_n$ implies that the sequence of complex measures $\{\sum\limits_{i=1}^n\mu_i\}_{n=1}^\infty$ converges in $\mathcal{M}^b(\mathbb{R})$, and moreover, the limit, which is denoted by $\rho$, is compactly supported on $[0,2\delta]$ with $|\rho|(\mathbb{R})\leq\frac{1}{2}$. Now, the dB function becomes clear: since we have
\[\sum_{n=1}^\infty v_{n,2}(\delta,z)=\frac{1}{2}\lim_{k\to \infty}\int_{\mathbb{R}}e^{izs}d\sum\limits_{n=1}^k\mu_n(s)=\frac{1}{2}\int_{\mathbb{R}}e^{izs}d\rho(s)=\frac{\widehat{\rho}(z)}{2}\]
it follows that 
\begin{equation}
 E_\delta(z)=k(\delta)e^{-iz\delta}(1+\widehat{\rho}(z))   
\end{equation}
for some $\rho\in \mathcal{M}^b(\mathbb{R})$ and supported by $[0,2\delta]$.
\par
On the closed upper half plane, we have 
$|\widehat{\rho}(z)|\leq |\rho|(\mathbb{R})\leq \frac{1}{2}$, therefore, it follows that  
\[\frac{|k(\delta)|}{2}|e^{-iz\delta}|\leq|E_\delta(z)| \leq \frac{3|k(\delta)|}{2}|e^{-iz\delta}|\]
on $\overline{\mathbb{C}+}$.
\par
This estimation, with the fact that $\frac{|k(\delta)|}{2}>0$, gives $B(E_\delta(z))=B(e^{-iz\delta})$ as sets. With the famous result saying that $B(e^{-iz\delta})=PW_\delta$ as sets (see \cite{1}), we conclude that $B(E_\delta(z))=PW_\delta$ as sets if $\delta$ is small enough.
\end{proof}
Now, we pivot to $PW_N$ and $B(E_N)$. 
\begin{lemma}
 $PW_N= B(E_N)$ as sets.
\end{lemma}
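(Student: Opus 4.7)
My strategy is to reduce the claim to the small-interval case of Lemma \ref{l3.4} via a subdivision of $[0,N]$, and then verify a two-sided estimate on $E_N$ that lets me conclude by the same argument used in the last paragraph of Lemma \ref{l3.4}.

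Since $\mu \in DS$ has finite total variation on $[0,N]$, I first choose a partition $0 = a_0 < a_1 < \cdots < a_k = N$ with $\|\mu\|((a_{i-1}, a_i]) < 1/8$ on each subinterval. Applying Lemma \ref{l3.4} to the shifted Dirac system on each piece yields, for the corresponding dB functions $E^{(i)}$, a two-sided estimate
\[c_i\,|e^{-iz(a_i-a_{i-1})}| \leq |E^{(i)}(z)| \leq C_i\,|e^{-iz(a_i-a_{i-1})}|\]
on $\overline{\mathbb{C}^+}$, for some constants $0 < c_i \leq C_i < \infty$.

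Next, I combine these into a global two-sided estimate for $E_N$. The upper bound $|E_N(z)| \leq C|e^{-izN}|$ on $\overline{\mathbb{C}^+}$ follows by rerunning the iteration from the proof of Lemma \ref{l3.4} directly on $[0,N]$: even without the smallness hypothesis, the bound $|v_{n,2}(x,z)| \leq \tfrac{1}{2}e^{2|z|N} P_0^n/n!$ with $P_0 := |P|([0,N])$ still forces absolute convergence through the factorial, producing a representation
\[E_N(z) = k(N)\,e^{-izN}\bigl(1 + \hat\rho(z)\bigr)\]
for some complex measure $\rho$ supported on $[0,2N]$ with $|\rho|(\mathbb{R}) < \infty$. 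For the matching lower bound $|E_N(z)| \geq c|e^{-izN}|$ on $\overline{\mathbb{C}^+}$, I would use the transfer-matrix factorization $T(N,z) = T_k(z)\cdots T_1(z)$, express $E_N$ as a linear combination of products of the local $E^{(i)}$ and their Hermite-Biehler conjugates $(E^{(i)})^{\#}$ (via the identities $u_1^{(i)} = (E^{(i)}+(E^{(i)})^{\#})/2$ and $u_2^{(i)} = i(E^{(i)} - (E^{(i)})^{\#})/2$ for each shifted piece), and exploit the Hermite-Biehler inequality $|(E^{(i)})^{\#}| \leq |E^{(i)}|$ on $\overline{\mathbb{C}^+}$ to show that the dominant term $\prod_i E^{(i)} \sim e^{-izN}$ is not overwhelmed by the conjugate cross-terms.

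Once the two-sided bound is established, the conclusion $B(E_N) = B(e^{-izN}) = PW_N$ as sets follows exactly as in the closing paragraph of Lemma \ref{l3.4}: two Hermite-Biehler functions with comparable modulus on $\overline{\mathbb{C}^+}$ generate the same de Branges space as a set of entire functions (since $F/E_1 \in H^2$ is invariant under multiplying by the bounded holomorphic function $E_1/E_2$), and $B(e^{-izN}) = PW_N$ is classical. The main obstacle is the global lower bound: while each conjugate factor $(E^{(i)})^{\#}$ is individually subdominant on $\mathbb{C}^+$, on $\mathbb{R}$ we have $|(E^{(i)})^{\#}| = |E^{(i)}|$, so a combinatorial argument is needed to show the sum of cross-terms in the expanded transfer-matrix product cannot cancel the leading $\prod_i E^{(i)}$ contribution; this control of cancellation — equivalently, the statement that $|E_N(t)|$ is bounded below on all of $\mathbb{R}$ — is the technical heart of the proof.
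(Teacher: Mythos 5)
Your proposal takes a genuinely different route from the paper, but it has two substantive gaps.

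\textbf{The factorial bound does not hold for general $\mu\in DS$.} You claim that the iteration on all of $[0,N]$ still converges because
$|v_{n,2}|\leq \tfrac12 e^{2|z|N}P_0^n/n!$. That $1/n!$ comes from symmetrizing the ordered simplex $\{0<t_n\leq\cdots\leq t_1\leq N\}$ into the full cube, which the paper achieves via Volpert's chain rule --- and that step requires $d\mu\ll dt$ (no atoms). With atoms, the diagonal has positive mass and the symmetrization fails: a single atom of mass $m$ at $t_0$ makes the iterated integral equal to $m^n$, not $m^n/n!$. This is precisely why the paper, in the general case, only proves the $|v_{n,2}|\lesssim 4^{-n}$ geometric bound \emph{after} partitioning into pieces with $\|\mu\|<1/8$, and never claims a global factorial estimate. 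Your local two-sided estimates on the partition pieces are fine, but the global factorial rerun is not.

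\textbf{The lower bound is not a detour the paper needs --- and your sketch of it is not a proof.} You acknowledge that the "combinatorial cancellation" argument for $|E_N(z)|\gtrsim|e^{-izN}|$ on $\overline{\mathbb{C}^+}$ is "the technical heart" and only sketch it. As written it will not close: on $\mathbb{R}$, $|(E^{(i)})^{\#}|=|E^{(i)}|$, so the Hermite--Biehler inequality gives you no leverage at all, and you would have to control genuine cancellations among $2^k$ cross-terms of comparable modulus. The paper sidesteps the lower bound entirely: for $B(E_N)\subset PW_N$ it needs \emph{only} the upper bound $|E_N(t)|\leq C$ on $\mathbb{R}$ (proved by partition plus the local geometric estimate), combined with the exponential-type formula $\tau_N(E)=N$ (from $\det H=1$) and the Paley--Wiener theorem applied to $\int|F|^2=\int|F/E_N|^2|E_N|^2<\infty$; for $PW_N\subset B(E_N)$ it decomposes $L^2(-N,N)$ into finitely many translated copies of $L^2(-\delta,\delta)$ and uses $e^{ihz}B(E_\delta)\subset B(E_N)$ together with Lemma~\ref{l3.4}. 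If you do insist on the lower-bound route, the more tractable argument would be via $\det T(N,t)=1$ and the boundedness of all four transfer-matrix entries (to get $|u(N,t)|$ bounded below on $\mathbb{R}$), followed by Phragmén--Lindelöf for $1/(1+\widehat\rho)$ on $\overline{\mathbb{C}^+}$; but that is not what you propose, and in any case the paper's asymmetric argument is shorter and avoids the issue altogether.
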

\begin{proof}
To prove $PW_N\subset B(E_N)$, We first observe that the type of $E_L$, denoted by $\tau_L$, can be determined by $\tau_L=L$ for $L\in [0,\infty)$. Assume $|h|\leq N-\delta$ and $F=\Hat{f}\in PW_{\delta}=B(E_\delta)$, then $e^{ihz}F=\widehat{\delta_{h}*f}\in B(E_N)$. Since $f$ runs in $L^2(-\delta,\delta)$, then $\delta_{h}*f$ runs in $L^2(-\delta-h,\delta-h)$. If we pick up some proper $h$, $L^2(-N,N)$ can be decomposed into the direct sum of finitely many such subspaces. As $B(E_N)$ is Hilbert, we conclude that $PW_N\subset B(E_N)$.
\par
On the other hand, to show $B(E_N)\subset PW_N$, we need to show that $|E_N(t)|<C$ on $\mathbb{R}$ for some constant $C$, or equivalently, $f_2(N,t)$ is bounded first. Since $\mu \in DS$, we can pick up finitely many point masses so that the rest of point masses is small enough; moreover,  Moreover, as $[0,N]$ is compact, we can find out finitely many points, denoted by $\delta_0=0<\delta _1<\delta _2< \cdots<\delta _{M-1}<\delta_M=N$ so that $||\mu||((\delta_n,\delta_{n+1}))=\max\limits_{i=1,2}(|\mu_i|((\delta_n,\delta_{n+1})))<\frac{1}{8}$. We need to be aware of that $\delta_n$ can be a point mass which has a large weight, and this is why we consider open intervals rather than closed ones. 
\par
Now, we repeat the calculation of Lemma \ref{l3.4} on intervals $I_n:=(\delta_n,\delta_{n+1})$ separately.  On $I_0$, this is indeed what we did at the beginning of the proof, and the conclusion is that $f(\delta_1-,t)$ is bounded. Recall that the point $\delta_1$ will update $f(\delta_1-,t)$ to $f(\delta_1,t)$ by a constant matrix ( which is not the identity if $\delta_1$ is a point mass), thus $f(\delta_1,t)$ is still bounded. On $I_n$, we consider the iteration as above by putting:
\[f_0=f(\delta_n,z),\ f_{n+1}=f(\delta_n,z)+\int_{\delta_n}^x\begin{pmatrix} 
					0 & e^{-2izs}dP(s)\\
					e^{2izs}d\overline{P}(s) & 0\\
\end{pmatrix}f_n(s)\]
An analogous calculation shows that $f(\delta_{n+1}-,t)$ is bounded if $f(\delta_n,t)$ is so, and so as $f(\delta_{n+1},t)$. It follows from induction that $|E_N(t)|<C$ on $\mathbb{R}$ for some constant $C$.
\par
Pick $F\in B(E_N)$, then recall the type of $E_N$ is given by $\tau_N=N$, hence $B_{\tau_N}:=\{F\in B(E_N):\tau (F)\leq N\}=B(E_a)$, where $a=\max\{x\in \mathbb{R}:\tau_x(E)=x\leq N\}=N$, namely, $\tau (F)\leq N$. Moreover, \[\int_{\mathbb{R}}|F(t)|^2dt=\int_{\mathbb{R}}|\frac{F(t)}{E_N(t)}|^2|E_N(t)|^2dt<C^2||\frac{F(z)}{E_N(z)}||_{H^2}\] 
Since $\frac{F}{E_N}\in H^2$ by the definition of $B(E_N)$, it follows that $F(t) \in L^2(\mathbb{R})$. Now, the Paley-Wiener theorem implies that $B(E_N)\subset PW_N$.
\end{proof}
Finally, if $d\mu$ is absolutely continuous with respect to Lebesgue measure, then due to Volpert's chain rule, the estimation of $|v_{n,2}|$ in Lemma \ref{l3.4} can be improved on $[0,N]$ as
\begin{align*}
|v_{n,2}|&\leq 
\frac{e^{2|z|N}}{2}\int_0^N\int_0^{t_1}\cdots\int_0^{t_{n-1}}d|P|(t_n)d|P|(t_{n-1})\cdots d|P|(t_1)\\&=\frac{e^{2|z|N}}{2}\int_0^N\int_0^{t_1}\cdots(\int_0^{t_{n-2}}\int_0^{t_{n-1}}d|P|(t_n)d|P|(t_{n-1}))\cdots d|P|(t_1)\\&=\frac{e^{2|z|N}}{2}\int_0^N\int_0^{t_1}\cdots(\int_0^{t_{n-3}}\frac{1}{2}(\int_0^{t_{n-2}}d|P|(t_n))^2d|P|(t_{n-2}))\cdots d|P|(t_1)
\\&=\frac{e^{2|z|N}(|P|[0,N])^n}{2n!}
\end{align*}
\par
Moreover, we have $d\mu_n\ll dt$ since $dP\ll dt$, thus we get the conclusion that:
\[E_N(z)=e^{-izN}(1+\widehat{\rho}(z))\]
for some $\rho \in L^1(0,2N)$ (formally, $\rho$ is absolutely continuous, but we still use this notation).
\par
Recall that $E_N(z)\neq 0$, otherwise $u(N,z)=0$. A calculation shows, for $t\in \mathbb{R}$, that 
\[\frac{1}{|E_N(t)|^2}=\frac{1}{1+\Hat{g}(t)}\]
where $g:=\rho+\rho_r+\rho*\rho_r$ with $\rho_r(t)=\overline{\rho}(-t)$. By Young's convolution inequality, we conclude that $g\in L^1(\mathbb{R})$, and  Wiener's lemma then implies that there is a $\phi \in L^1(\mathbb{R})$ such that $\frac{1}{1+\Hat{g}(t)}=1+\Hat{\phi}$, this gives 
\[\frac{1}{|E_N(t)|^2}=1+\Hat{\phi}\]
Moreover, if we take the complex conjugate on both sides, we have $\Hat{\phi}=\overline{\Hat{\phi}}$. Recall that $\overline{\Hat{\phi}}=\Hat{\phi_r}$, and this gives $\phi=\phi_r$. By the definition, we have
\[\lVert F\rVert^2_{B(E_N)}=\frac{1}{\pi}\int_\mathbb{R}|F|^2\frac{dt}{|E_N|^2}=\frac{1}{\pi}\int_\mathbb{R}|\Hat{f}|^2dt+\frac{1}{\pi}\int_\mathbb{R} \overline{\hat{f}}\cdot \widehat{\phi*f}dt\]
Parseval's identity implies that 
\[\lVert F\rVert^2_{B(E_N)}=2\lVert f\rVert_{L^2}^2+2\langle f,\phi*f\rangle\]
\vspace{2cm}
\section{Reconstruction from PW Spaces}
\par
In this section, we discuss how to recover a Dirac operator from the Hilbert space $(PW_x, \lVert \cdot\rVert_{\phi,x})$ for $\phi\in \Phi_N$ and $x\in (0,N]$.
\par
In the sequel, we fix $N>0$, and assume $\phi\in \Phi_N$ and $x\in (0,N]$. We define an operator, denoted by $T^x_\phi$, as follows:
\[T^x_\phi:L^2[-x,x]\to L^2[-x,x], T^x_\phi f=\int_{-x}^x\phi(t-s)f(s)ds\]
Since $T^x_\phi f$ is interpreted as $\phi*f$ on $[-x,x]$, the definition makes sense because of Young's inequality for integral operators.
\begin{lemma}
 $T^x_\phi$ is self-adjoint and compact.   
\end{lemma}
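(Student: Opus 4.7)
The plan is to verify self-adjointness directly from the defining symmetry of $\Phi_N$, and then to obtain compactness by approximating $\phi$ in $L^1(-2x,2x)$ by continuous functions, which yield Hilbert–Schmidt operators converging in operator norm.

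For self-adjointness, I would simply compute the adjoint from the integral kernel. The kernel of $T^x_\phi$ is $K(t,s)=\phi(t-s)$, so the kernel of $(T^x_\phi)^*$ is $\overline{K(s,t)}=\overline{\phi(s-t)}$. Since $\phi\in\Phi_N$ satisfies $\phi(y)=\overline{\phi(-y)}$, we have $\overline{\phi(s-t)}=\phi(t-s)$, so the two kernels agree and $T^x_\phi=(T^x_\phi)^*$. This is a one-line check.

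For compactness, the first step is to note the operator-norm bound following from Young's convolution inequality: if one extends $f\in L^2(-x,x)$ by zero to $\mathbb{R}$, then $(T^x_\phi f)(t)$ depends only on the values of $\phi$ on $(-2x,2x)$, and
\[
\lVert T^x_\phi f\rVert_{L^2(-x,x)}\leq \lVert \phi\rVert_{L^1(-2x,2x)}\lVert f\rVert_{L^2(-x,x)},
\]
so $\lVert T^x_\phi\rVert_{\mathrm{op}}\leq \lVert \phi\rVert_{L^1(-2x,2x)}$. Next, choose a sequence $\phi_n\in C_c(-2x,2x)$ (preserving the symmetry $\phi_n(y)=\overline{\phi_n(-y)}$, which can always be arranged by symmetrization) with $\phi_n\to\phi$ in $L^1(-2x,2x)$. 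Each $\phi_n$ is continuous and bounded on the compact square $[-x,x]^2$, hence its kernel $\phi_n(t-s)$ lies in $L^2([-x,x]^2)$, so $T^x_{\phi_n}$ is a Hilbert–Schmidt operator and in particular compact. By linearity and the Young estimate,
\[
\lVert T^x_\phi-T^x_{\phi_n}\rVert_{\mathrm{op}}=\lVert T^x_{\phi-\phi_n}\rVert_{\mathrm{op}}\leq \lVert \phi-\phi_n\rVert_{L^1(-2x,2x)}\longrightarrow 0.
\]
Since the compact operators form a norm-closed subspace of the bounded operators on $L^2(-x,x)$, the limit $T^x_\phi$ is compact.

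There is no real obstacle here; the only small subtleties are checking the Young-type bound correctly in the form where $f$ is supported in $(-x,x)$ and $\phi$ in $(-2x,2x)$, and making sure the approximants $\phi_n$ can be chosen inside the symmetry condition of $\Phi_N$ so that each $T^x_{\phi_n}$ is self-adjoint as well; but this symmetry plays no role in establishing compactness and is only needed if one wishes to approximate inside the class of self-adjoint compacts.
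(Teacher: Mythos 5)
Your argument is correct and follows essentially the same route as the paper: self-adjointness from the kernel symmetry $\phi(y)=\overline{\phi(-y)}$ via Fubini, and compactness by approximating $\phi$ in $L^1$ by compactly supported continuous (the paper uses $C_c^\infty$) functions whose associated operators are Hilbert–Schmidt, then passing to the operator-norm limit via Young's inequality. The remark that the approximants can be chosen to preserve the symmetry is a nice extra observation but, as you note, not needed for compactness.
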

\begin{proof}
Let $f,g\in L^2[-x,x]$, then it follows from Fubini theorem that 
\[\langle f,T^x_\phi g\rangle=\int_{-x}^x\int_{-x}^x\overline{f}(t)\phi(t-s)dtg(s)ds\]
Recall that $\phi(x)=\overline{\phi}(-x)$, then the left-hand side is indeed $\langle T^x_\phi f,g\rangle$, and this shows that $T^x_\phi$ is self-adjoint.
\\
Pick up $\phi_n\in C^{\infty}_c(-2N,2N)$ so that $\phi_n\to \phi$ in $L^1(-2N,2N)$. Since $T^x_{\phi_n}$(we abuse this notation even though $\phi_n$ is probably not in $\Phi_N$) are Hilbert-Schmidt integral operators, hence are compact as well.
Young's inequality implies that 
\[\lVert T^x_{\phi_n}f-T^x_\phi f\rVert_{L^2[-x,x]}=\lVert T^x_{\phi_n-\phi}f \rVert_{L^2[-x,x]}\leq \lVert\phi_n-\phi\rVert_{L^1(-2N,2N)}\lVert f\rVert_{L^2[-x,x]}\]
and this estimation shows that $T^x_{\phi_n}\to T^x_\phi$ in $B(L^2[-x,x])$. As a result, $T^x_\phi$ is also compact.    
\end{proof}
For $F=\hat{f}, H=\hat{h} \in PW_x$, we define an inner product (which is easy to verify) as follows:
\[[F,H]_{\phi,x}:=2\langle f,(1+T^x_\phi)h\rangle\]
The norm is given by 
\[\lVert F\rVert ^2_{\phi,x}=[F,F]_{\phi,x}\]
\begin{lemma}
 $(PW_x,\lVert\cdot\rVert_{\phi,x})$ is a Hilbert space.   
\end{lemma}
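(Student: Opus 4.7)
The plan is to reduce the question to a statement about the positive operator $1+T^x_\phi$ on the familiar Hilbert space $L^2[-x,x]$, and then transport everything back to $PW_x$ via the Paley--Wiener/Plancherel isometry. There are essentially three things to check: that $[\,\cdot\,,\,\cdot\,]_{\phi,x}$ really is an inner product, that the induced norm is equivalent to the standard norm on $PW_x$, and that $PW_x$ is complete in this equivalent norm.

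First I would verify that $[\,\cdot\,,\,\cdot\,]_{\phi,x}$ is a genuine inner product. Sesquilinearity is obvious; Hermitian symmetry follows from the self-adjointness of $T^x_\phi$ proved in the previous lemma, since $\langle f,(1+T^x_\phi)h\rangle = \overline{\langle h,(1+T^x_\phi)f\rangle}$. For strict positivity, given any nonzero $f\in L^2[-x,x]$, extend $f$ by zero to an element of $L^2(-N,N)$; then on $[-x,x]$ one has $(\phi*f)(t)=\int_{-x}^{x}\phi(t-s)f(s)\,ds=(T^x_\phi f)(t)$, while outside $[-x,x]$ the convolution does not contribute to $\langle f,f+\phi*f\rangle$ because $f$ vanishes there. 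Hence
\[
\langle f,f+\phi*f\rangle_{L^2(-N,N)} = \langle f,(1+T^x_\phi)f\rangle_{L^2[-x,x]},
\]
and the defining property of $\Phi_N$ gives $[F,F]_{\phi,x}>0$ whenever $F=\widehat{f}\neq 0$.

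The main obstacle is completeness, for which I need that $\lVert\cdot\rVert_{\phi,x}$ is equivalent to the standard $L^2$-type norm on $PW_x$. Here I would use the spectral theorem for the compact self-adjoint operator $T^x_\phi$: its spectrum consists of real eigenvalues $\{\lambda_n\}$ with $0$ as the only possible accumulation point, so the spectrum of $1+T^x_\phi$ is $\{1+\lambda_n\}$ accumulating only at $1$. Strict positivity of $\langle f,(1+T^x_\phi)f\rangle$ on nonzero $f$ forces every $1+\lambda_n$ to be strictly positive, and since only finitely many $\lambda_n$ can lie outside any fixed neighborhood of $0$, the infimum $c:=\inf_n(1+\lambda_n)$ is attained and strictly positive. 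By the functional calculus,
\[
c\,\lVert f\rVert_{L^2}^2 \;\le\; \langle f,(1+T^x_\phi)f\rangle \;\le\; \bigl(1+\lVert T^x_\phi\rVert\bigr)\lVert f\rVert_{L^2}^2.
\]
Thus $\lVert \widehat{f}\rVert_{\phi,x}^2=2\langle f,(1+T^x_\phi)f\rangle$ is equivalent to $2\lVert f\rVert_{L^2[-x,x]}^2$.

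Finally, by the Paley--Wiener theorem the map $f\mapsto \widehat{f}$ is a bijection from $L^2[-x,x]$ onto $PW_x$, and through Plancherel it turns the standard $L^2$-norm on $L^2[-x,x]$ into a complete Hilbert norm on $PW_x$. Equivalent norms share Cauchy sequences and limits, so $(PW_x,\lVert\cdot\rVert_{\phi,x})$ inherits completeness from this structure and is therefore a Hilbert space.
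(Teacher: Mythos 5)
Your proposal follows essentially the same route as the paper: extend $f$ by zero to invoke the defining positivity condition of $\Phi_N$, use the spectral theorem for the compact self-adjoint operator $T^x_\phi$ to conclude that $\sigma(1+T^x_\phi)$ is bounded away from $0$, obtain two-sided bounds showing $\lVert\cdot\rVert_{\phi,x}$ is equivalent to the standard Paley--Wiener norm, and deduce completeness. Your version just spells out the inner-product axioms and the final Plancherel transport more explicitly than the paper does.
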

\begin{proof}
  Pick $f\neq0\in L^2[-x,x]$, and the extension of $f$ (set 0 out of $[-x,x]$) in $L^2[-N,N]$ is also denoted by $f$ for convenience, then 
  \[\langle f,(1+T^N_\phi)f\rangle=\langle f,(1+T^x_\phi)f\rangle>0\]
  Since $T^x_\phi$ is self-adjoint and compact, we have $\sigma(T^x_\phi)=\sigma_p(T^x_\phi)\cup\{0\}$ and 0 is the only accumulation point, namely,
\[\sigma(1+T^x_\phi)=\sigma_p(1+T^x_\phi)\cup\{1\}=(1+\sigma_p(T^x_\phi))\cup\{1\}\]
From the inequality above, we conclude that if $\lambda\in \sigma_p(T^x_\phi+1)$, then $\lambda>0$, otherwise there must be a $f\in L^2[-x,x]$ so that 
\[\langle f,(1+T^x_\phi)f\rangle=\lambda\lVert f\rVert^2\leq0\]
Since 1 is the only accumulation point of $\sigma_p(1+T^x_\phi)$, then there are at most finitely many eigenvalues in $[0,1-\epsilon]$ for any $\epsilon>0$. Let's denote the smallest eigenvalue by $\lambda_{min}>0$, the spectral theorem shows that 
\[\langle f,(1+T^x_\phi)f\rangle=\int_{\sigma(1+T^x_\phi)}td\lVert E_{\phi,x}(t)f\rVert^2\geq \lambda_{min}\lVert f\rVert^2\]
where $E_{\phi,x}(t)$ is the spectral family of $1+T^x_\phi$.
\par
On the other hand, we have $\langle f,(1+T^x_\phi)f\rangle\leq \lVert1+T^x_\phi\rVert\cdot\lVert f\rVert^2$. Those two inequalities together imply that 
\[\lambda_{min}\lVert f\rVert^2\leq \lVert F\rVert_{\phi,x}^2\leq \lVert1+T^x_\phi\rVert\cdot\lVert f\rVert^2 \]
It follows from this inequality that $(PW_x,||\cdot||_{\phi,x})$ is Hilbert.
\end{proof}
\begin{corollary}
\label{c4.1}
Assume $\phi\in \Phi_N$. There are $\phi_n\in C^{\infty}_c(-2N,2N)\cap\Phi_N$ such that  $\phi_n\to \phi$ in $L^1(-2N,2N)$.    
\end{corollary}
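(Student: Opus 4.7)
The plan is to obtain $\phi_n$ by a combined truncate-and-mollify procedure, choosing a cut-off and a mollifier in a way that preserves Hermitian symmetry, and then using the strict spectral gap from Lemma 4.2 to secure the positivity condition.

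First I would fix parameters $\epsilon > 0$ and $\delta \in (0, \epsilon/2)$, and pick a real-valued, even cut-off $\chi_\epsilon \in C^\infty_c(-2N, 2N)$ which equals $1$ on $(-2N + 2\epsilon, 2N - 2\epsilon)$ and vanishes outside $(-2N + \epsilon, 2N - \epsilon)$, together with a real-valued, even, non-negative mollifier $\psi_\delta$ with $\int \psi_\delta = 1$ and $\operatorname{supp} \psi_\delta \subset (-\delta, \delta)$. Setting $\phi_{\epsilon,\delta} := \psi_\delta * (\chi_\epsilon \phi)$, the support calculus places $\phi_{\epsilon,\delta}$ in $C^\infty_c(-2N, 2N)$. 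For the Hermitian symmetry, note that $(\chi_\epsilon \phi)(-x) = \chi_\epsilon(x) \overline{\phi(x)} = \overline{(\chi_\epsilon \phi)(x)}$ since $\chi_\epsilon$ is even and real, and a short substitution argument using that $\psi_\delta$ is even and real then yields $\phi_{\epsilon,\delta}(-x) = \overline{\phi_{\epsilon,\delta}(x)}$, verifying condition (1) of $\Phi_N$.

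For the $L^1$ approximation, $\chi_\epsilon \phi \to \phi$ in $L^1(-2N, 2N)$ as $\epsilon \to 0$ by dominated convergence, and for each fixed $\epsilon$, $\psi_\delta * (\chi_\epsilon \phi) \to \chi_\epsilon \phi$ in $L^1$ as $\delta \to 0$ by standard mollifier properties. Thus, choosing a diagonal sequence $(\epsilon_n, \delta_n) \to (0,0)$, I get $\phi_{\epsilon_n, \delta_n} \to \phi$ in $L^1(-2N, 2N)$.

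The strict positivity condition (2) is the only delicate point, and this is where Lemma 4.2 is used crucially. That lemma shows not merely that $1 + T^N_\phi$ is positive, but that $1 + T^N_\phi \geq \lambda_{\min} I$ on $L^2[-N, N]$ for some $\lambda_{\min} > 0$. By Young's inequality (as used in Lemma 4.1), the operator norm satisfies $\lVert T^N_g \rVert \leq \lVert g \rVert_{L^1(-2N, 2N)}$ for any $g$ on $(-2N, 2N)$. Shrinking $\epsilon_n, \delta_n$ so that $\lVert \phi_{\epsilon_n, \delta_n} - \phi \rVert_{L^1(-2N, 2N)} < \lambda_{\min}/2$, I obtain
\[
1 + T^N_{\phi_{\epsilon_n,\delta_n}} = (1 + T^N_\phi) + T^N_{\phi_{\epsilon_n,\delta_n} - \phi} \geq \tfrac{\lambda_{\min}}{2} I > 0,
\]
which gives $\langle f, f + \phi_{\epsilon_n,\delta_n} * f \rangle > 0$ for every nonzero $f \in L^2(-N, N)$. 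Hence $\phi_n := \phi_{\epsilon_n,\delta_n}$ lies in $C^\infty_c(-2N, 2N) \cap \Phi_N$ and converges to $\phi$ in $L^1(-2N, 2N)$.

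The main obstacle is precisely the preservation of the strict positivity in condition (2): $L^1$-small perturbations of $\phi$ could in principle create a zero eigenvalue for $1 + T^N_{\phi_n}$. The resolution is the quantitative spectral gap $\lambda_{\min} > 0$ supplied by Lemma 4.2, which, combined with the operator-norm bound from Young's inequality, gives a stable cushion for the perturbation argument; symmetry is preserved essentially for free by choosing the cut-off and the mollifier to be real-valued and even.
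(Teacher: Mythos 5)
Your proposal is correct and follows essentially the same route as the paper: the core of the argument in both cases is the quantitative lower bound $1 + T^N_\phi \geq \lambda_{\min} I$ from Lemma 4.2 together with the Young-inequality bound $\lVert T^N_g\rVert \leq \lVert g\rVert_{L^1}$, which shows that condition (2) is stable under $L^1$-small perturbations. The only cosmetic difference is in how the Hermitian symmetry is preserved: the paper approximates $\phi$ on $(0,2N)$ by $C^\infty_c(0,2N)$ functions and then reflects, whereas you build the approximant directly by truncating with an even real cut-off and convolving with an even real mollifier; both are standard and equally valid.
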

  \begin{proof}
It's trivial to see that at least for $x>0$ we can find a sequence $\phi_n$ from $C^{\infty}_c(0,2N)$ to approximate $\phi$, then with the help of $\phi(x)=\bar{\phi}(-x)$, the sequence can be extended to $(-2N,0)$ and keeps the same relation. Since we know $\langle f,(1+T^N_\phi)f\rangle\geq \lambda_{min}\lVert f\rVert^2$ and $\lvert \langle f,(T_{\phi_n}^N-T^N_\phi)f\rangle\rvert\leq \lVert\phi_n-\phi\rVert_{L^1(-2N,2N)}\lVert f\rVert^2$, so $\phi_n$ can be chosen as $\langle f,(1+T^N_{\phi_n})f\rangle>0$.
\end{proof}
Moreover, the following observation gives a more accurate description of  $(PW_x,\lVert\cdot\rVert_{\phi,x})$:
\begin{lemma}
 $(PW_x,\lVert\cdot\rVert_{\phi,x})$ is a regular dB space.   
\end{lemma}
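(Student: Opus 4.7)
The plan is to verify the three axioms of Theorem \ref{3.1} and then the regularity condition. Everything hinges on recasting the norm as an integral against a weight on $\mathbb{R}$: by Parseval's identity, exactly as at the end of the previous section, one has
\begin{equation*}
\lVert F\rVert_{\phi,x}^2 \;=\; \frac{1}{\pi}\int_\mathbb{R}|F(t)|^2\bigl(1+\widehat{\phi}(t)\bigr)\,dt,
\end{equation*}
and the weight $1+\widehat\phi$ is real-valued on $\mathbb{R}$ because of the symmetry $\phi(s)=\overline{\phi(-s)}$.

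From this representation the two isometry axioms of Theorem \ref{3.1} are nearly automatic. For the conjugation axiom, $|F^{\#}(t)|=|F(t)|$ on $\mathbb{R}$ gives $\lVert F^{\#}\rVert_{\phi,x}=\lVert F\rVert_{\phi,x}$; and $F^{\#}\in PW_x$ since its Fourier transform is $s\mapsto\overline{f(-s)}$, still supported in $[-x,x]$. For the Blaschke axiom, $|b_w(t)|=1$ on $\mathbb{R}$ for $b_w(z)=(z-\overline{w})/(z-w)$, so the integrand is unchanged; and whenever $F(w)=0$, the function $G=b_w F$ is entire (removable singularity at $w$), of exponential type at most that of $F$, and in $L^2(\mathbb{R})$, hence in $PW_x$ by Paley-Wiener.

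The continuity of point evaluation on $(PW_x,\lVert\cdot\rVert_{\phi,x})$ (the remaining axiom) comes from the two-sided bound $\lambda_{\min}\lVert f\rVert^2\le \lVert F\rVert_{\phi,x}^2\le \lVert 1+T_\phi^x\rVert\cdot\lVert f\rVert^2$ of the previous lemma, which exhibits $\lVert\cdot\rVert_{\phi,x}$ as equivalent to the standard $PW_x$ norm. Theorem \ref{3.1} then produces a dB function $E$ with $(PW_x,\lVert\cdot\rVert_{\phi,x})=B(E)$. For regularity, fix $F\in PW_x$ and $z_0\in\mathbb{C}$; the difference quotient $S_{z_0}F$ is entire (removable singularity at $z_0$), of exponential type $\le x$, and for real $t$ away from $z_0$ satisfies $|S_{z_0}F(t)|\le(|F(t)|+|F(z_0)|)/|t-z_0|$, so $S_{z_0}F\in L^2(\mathbb{R})$; Paley-Wiener puts it back in $PW_x$.

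No single step really qualifies as the main obstacle: once the integral representation of the norm is in place, the isometry axioms are forced by the symmetries of the integrand on $\mathbb{R}$, and the closure and regularity statements reduce to classical Paley-Wiener facts. The most care is needed in keeping track of the real-valuedness of $\widehat\phi$ (so that the integrated norm is manifestly nonnegative once combined with the positivity assumption on $\phi\in\Phi_N$) and in the verifications that $F^{\#}$, $G=b_w F$, and $S_{z_0}F$ all genuinely remain in $PW_x$.
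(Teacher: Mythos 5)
Your proof is correct and follows exactly the route the paper prescribes (check the axioms of Theorem \ref{3.1} with Paley--Wiener); the paper omits the details but your integral rewriting $\lVert F\rVert_{\phi,x}^2=\frac{1}{\pi}\int_\mathbb{R}|F(t)|^2\bigl(1+\widehat{\phi}(t)\bigr)\,dt$, valid by Parseval and the boundedness of $\widehat{\phi}$, is precisely the observation that makes the two isometry axioms and the regularity check transparent. Nothing to correct.
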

The proof is super straightforward; one may check Theorem \ref{3.1} term by term with the help of the Paley-Wiener theorem. We leave the proof to readers (or see \cite{12}).
\subsection{the Canonical System Derived from $PW_N$ I}
As a regular dB space, there is a dB function, denoted by $E_x$, such that $(PW_x,\rVert\cdot\rVert_{\phi,x})=B(E_x)$; especially, without loss of the generality, we can normalize $E_N$ s.t. $E_N(0)=1$. It follows from Theorem \ref{3.3} that there is a canonical system $H$ with $trH=1$ on $(0,N)$ such that
\[E_N(z)=u_1(N,z)-iu_2(N,z)\]
and the corresponding reproducing kernels $J_w$ are given (Theorem 4.6 in \cite{1}) by
\[J_w(z)=\int_0^Nu^*(t,w)H(t)u(t,z)dt\]
In the sequel, we simply denote $(PW_x,||\cdot||_{\phi,x})$ by $PW_x$, and by $B_x$ the regular dB space of $u'=zJHu$ on $(0,x)$.
\par
For $PW_x$, we have $PW_{x_1}\subset PW_{x_2}$ if $x_1\leq x_2$; For $B_x$, we still have $B_{x_1}\subset B_{x_2}$ if $x_1\leq x_2$ and $x_1$ is regular, which means $H$ cannot be written as $h(x)P_\alpha$ where $h(x)$ is a scaler function and $P_\alpha$ is a constant matrix (this is called singular). See Section 10 in \cite{8} for details and Section 1.2 in \cite{1}.
\par
We denote all regular values of the canonical system on $[0,N]$ by $R$. The Ordering Theorem implies that either $PW_x\subset B_t$ or $B_t\subset PW_x$ for $t\in R$. Define for $t\in R$ a function $x(t)$ by
\[x(t)=\inf\{x\in [0,N]: B_t\subset PW_x\}\]
It is clear that $x(t)$ is increasing and that $x(0)=0,x(N)=N$. We apply a modification: if $(0,N)$ starts with a singular interval $(0,a)$ and $E_a(z)=1$, we delete this initial interval and rescale the rest so that we still have a canonical system on $(0,N)$. Clearly, this modification does not change $B_N$.
\begin{lemma}
 $\forall x\in (0,N)$, $PW_x=\bigcap\limits_{y>x}PW_y=\overline{\bigcup\limits_{y<x}PW_y}$. This closure is taken in $PW_N$.   
\end{lemma}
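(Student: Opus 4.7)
The plan is to reduce both equalities to familiar Paley-Wiener bookkeeping about supports of $L^2$-functions, and then handle the closure statement using that $1+T^N_\phi$ is a bounded (in fact boundedly invertible) operator on $L^2[-N,N]$.

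First I would verify that the containment $PW_y \hookrightarrow PW_N$ is isometric for every $y \in (0,N]$. If $F=\hat f$ with $f \in L^2[-y,y]$, then extending $f$ by zero to $L^2[-N,N]$ does not change the convolution on $[-y,y]$: one checks that $(T^N_\phi f)(t)=(T^y_\phi f)(t)$ for $t\in[-y,y]$, and since $\bar f$ vanishes outside $[-y,y]$ the integral $\langle f, T^N_\phi f\rangle$ over $[-N,N]$ collapses to the analogous integral over $[-y,y]$. This yields $\|F\|_{\phi,N}=\|F\|_{\phi,y}$, so each $PW_y$ sits inside $PW_N$ isometrically and is therefore closed in $(PW_N,\|\cdot\|_{\phi,N})$.

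For the first equality, I would argue set-theoretically. Each $F\in PW_N$ has a unique representation $F=\hat f$ with $f \in L^2[-N,N]$, and by Paley-Wiener, $F \in PW_y$ iff $f$ vanishes outside $[-y,y]$. Consequently $F \in \bigcap_{y>x} PW_y$ is equivalent to $f$ vanishing outside $\bigcap_{y>x}[-y,y]=[-x,x]$, i.e.\ to $F \in PW_x$; the reverse inclusion is immediate from the nesting of the sets.

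For the second equality, the forward inclusion $\overline{\bigcup_{y<x}PW_y}\subset PW_x$ follows because $PW_x$ is closed in $PW_N$ and already contains each $PW_y$ with $y<x$. For the reverse inclusion, given $F=\hat f \in PW_x$, I would choose $y_n \uparrow x$ with $y_n<x$, set $f_n:=f\cdot \chi_{[-y_n,y_n]}$, and let $F_n:=\hat{f_n}\in PW_{y_n}$. Dominated convergence gives $\|f-f_n\|_{L^2}\to 0$, and the boundedness of $1+T^N_\phi$ on $L^2[-N,N]$ then yields
\[\|F-F_n\|_{\phi,N}^2 = 2\langle f-f_n,(1+T^N_\phi)(f-f_n)\rangle \leq 2\|1+T^N_\phi\|\cdot\|f-f_n\|_{L^2}^2 \longrightarrow 0,\]
so $F \in \overline{\bigcup_{y<x}PW_y}$. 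I do not expect any genuine obstacle; the only point requiring care is the isometry check in the first paragraph, which is a clean bookkeeping computation with the convolution operator and licenses all of the Hilbert-space manipulations that follow.
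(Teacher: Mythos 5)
Your proof is correct and takes essentially the same approach as the paper: translate membership in $PW_y$ to a support condition on $f$ via the Paley--Wiener correspondence, and approximate a given $F=\hat f\in PW_x$ by the truncations $\widehat{f\chi_{[-y_n,y_n]}}$. You supply a bit more bookkeeping than the paper does -- in particular the explicit check that the embeddings $PW_y\hookrightarrow PW_N$ are isometric and the use of the bound on $1+T^N_\phi$ to pass $L^2$-convergence of the truncations to $\|\cdot\|_{\phi,N}$-convergence -- but these are details the paper's proof relies on implicitly, not a different route.
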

\begin{proof}
 Since $PW_x\subset PW_y$ if $x\leq y$, it's clear that $PW_x\subset \bigcap\limits_{y>x}PW_y$. On the other hand, let $F\in \bigcap\limits_{y>x}PW_y$, then $F\in PW_{x+\frac{1}{n}}$ for all positive integers $n$, i.e., $F(z)=\hat f_n$ where $f_n \in L^2(-(x+\frac{1}{n}),x+\frac{1}{n})$. By the uniqueness of the inverse Fourier transform, we get that all $f_n$ are the same. This is true only if $f_n$ are supported by $[-x,x]$, hence $F\in PW_x$.
\par
Since $PW_y\subset PW_x$ if $y\leq x$, hence $\overline{\bigcup\limits_{y<x}PW_y}\subset PW_x$ as $PW_y$ are closed in $PW_x$. Let $F\in PW_x$, then $F=\hat{f}$ for some $f\in L^2(-x,x)$. Define $f_n=\chi_{(-(x-\frac{1}{n}),x-\frac{1}{n})}f$, then $F_n\to F$ in $PW_x$, hence it follows $F\in \overline{\bigcup\limits_{y<x}PW_y}$.   
\end{proof}
\begin{lemma}
\label{l4.5}
 The (modified) canonical system $u'=zJHu$ has no singular points. Moreover, $PW_{x(t)}=B_t$ for all $t\in [0,N]$.   
\end{lemma}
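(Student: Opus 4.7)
The plan is to identify $B_t$ with $PW_{x(t)}$ first on regular values by sandwiching between Paley-Wiener spaces, and then to exclude singular intervals by exploiting the strict monotonicity of $\{PW_x\}$ established in the preceding lemma.

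First I would fix $t\in R$ and apply the Ordering Theorem: for every $x\in[0,N]$ either $B_t\subset PW_x$ or $PW_x\subset B_t$. The infimum definition of $x(t)$ together with the preceding lemma gives
\[
B_t\subset\bigcap_{y>x(t)}PW_y=PW_{x(t)},
\]
while for $y<x(t)$ the failure of $B_t\subset PW_y$ forces $PW_y\subset B_t$ by Ordering; since $B_t$ is closed in $PW_N$ as a de Branges subspace, the second identity of the preceding lemma yields
\[
PW_{x(t)}=\overline{\bigcup_{y<x(t)}PW_y}\subset B_t.
\]
Hence $B_t=PW_{x(t)}$ for every $t\in R$.

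Next I would rule out singular intervals. Suppose $(a,b)\subset[0,N]$ is a maximal singular interval with $a<b$; the modification already excludes the initial case where $E_a\equiv 1$. With $\operatorname{tr}H=1$, the Hamiltonian on $(a,b)$ must be the constant rank-one projection $P_\alpha$, and $(JP_\alpha)^2=0$ gives the explicit transfer matrix $I+(t-a)zJP_\alpha$. A direct computation then produces
\[
E_t=\bigl(1-\tfrac{i(t-a)z}{2}\bigr)E_a-\tfrac{i(t-a)z}{2}\,e^{-2i\alpha}E_a^{\#},
\]
and the standard de Branges set-equality criterion applied to this $z$-affine change of the dB function on a rank-one stretch shows $B(E_t)=B(E_a)$ as sets for all $t\in[a,b]$. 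In particular $B_a=B_b$, so the first step at the regular endpoints yields $PW_{x(a)}=PW_{x(b)}$, and the strict inclusion $PW_{x_1}\subsetneq PW_{x_2}$ whenever $x_1<x_2$ forces $x(a)=x(b)$. Thus the chain $\{B_t\}$ would plateau on $[a,b]$; but under the bijection $t\leftrightarrow x(t)$ this is incompatible with the fact, from the preceding lemma, that the Paley-Wiener chain is strictly left- and right-continuous in $x$, giving the desired contradiction.

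The hardest step is the second paragraph, specifically the set equality $B(E_t)=B(E_a)$ on singular intervals: the transfer-matrix computation is routine, but converting the explicit $z$-affine relation between $E_t$, $E_a$, and $E_a^{\#}$ into genuine equality of sets needs the standard de Branges theory for linked dB functions on rank-one stretches. Once that is in hand, the mismatch with strict monotonicity of $\{PW_x\}$ is immediate. Combining the two steps gives $R=[0,N]$, and the identity of the first paragraph then extends to every $t\in[0,N]$.
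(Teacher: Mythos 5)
Your first paragraph is correct and matches the paper's argument verbatim: for $t\in R$ the Ordering Theorem traps $B_t$ between $\overline{\bigcup_{y<x(t)}PW_y}$ and $\bigcap_{y>x(t)}PW_y$, both equal to $PW_{x(t)}$ by the preceding lemma, giving $B_t=PW_{x(t)}$.

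The second paragraph, however, contains a genuine error. On a singular (indivisible) interval $(a,b)$ the de Branges space does \emph{not} stay constant: the correct statement, which the paper imports from Corollary~10.11 of Remling's de Branges paper, is that $B_b\ominus B_a$ is exactly \emph{one}-dimensional. Your transfer-matrix computation $E_t=(1-\tfrac{i(t-a)z}{2})E_a-\tfrac{i(t-a)z}{2}e^{-2i\alpha}E_a^{\#}$ is fine, but this is a $z$-\emph{dependent} linear combination of $E_a$ and $E_a^\#$; the set-equality criterion $B(E_1)=B(E_2)$ requires the matrix linking $(\operatorname{Re}E_2,\operatorname{Im}E_2)$ to $(\operatorname{Re}E_1,\operatorname{Im}E_1)$ to be a \emph{constant} element of $SL(2,\mathbb{R})$, and it is not met here. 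Concretely, writing $G(z):=e_\alpha^{T}u(a,z)=\tfrac12(e^{i\alpha}E_a+e^{-i\alpha}E_a^{\#})$, the reproducing kernels satisfy $J_w^{(t)}-J_w^{(a)}=L(t)\,\overline{G(w)}\,G$, so $G\in B(E_t)$; yet $G/E_a=\tfrac12(e^{i\alpha}+e^{-i\alpha}E_a^{\#}/E_a)$ has modulus bounded away from zero on $\mathbb{R}$ and is therefore not in $H^2$, so $G\notin B(E_a)$. Hence $B(E_a)\subsetneq B(E_t)$ and the ``plateau'' you posit never happens. Consequently the rest of your argument (deducing $x(a)=x(b)$ and appealing to ``strict left- and right-continuity'' of the $PW$-chain, which in any case was never established as strict) does not get off the ground.

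The paper's actual mechanism is cleaner and worth absorbing: $PW_{x(b)}\ominus PW_{x(a)}=B_b\ominus B_a$ would have to be one-dimensional, but for Paley–Wiener spaces $PW_{y_1}\subset PW_{y_2}$ the orthogonal complement is $\widehat{L^2}$ of the annulus $\{y_1<|s|\le y_2\}$, which is either trivial (if $y_1=y_2$) or infinite-dimensional — never one-dimensional. That dimension-count contradiction is the whole content of the second half of the lemma, and it is precisely the piece your proposal needs to replace.
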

\begin{proof}
 If $t\in R$, then either $PW_x\subset B_t$ or $B_t\subset PW_x$ for $t\in R$, hence it follows that $\overline{\bigcup\limits_{y<x(t)}PW_y}\subset B_t\subset \bigcap\limits_{y>x(t)}PW_y$, namely,  $PW_{x(t)}=B_t$.  
 \par
 If $(a,b)$ is a singular interval, then for regular values $a,b$, we have $PW_{x(a)}=B_a,PW_{x(b)}=B_b$, i.e., 
\[PW_{x(b)}\ominus PW_{x(a)}=B_b\ominus B_a\]
Corollary 10.11 in \cite{8}  implies that the right-hand side is one-dimensional; however, the left-hand side cannot be one-dimensional. This contradiction shows that there are no singular points.
\end{proof}
\subsection{Two Integral Equations}
As regular dB spaces, we can discuss reproducing kernels and conjugate kernels, and those two different types of kernels will give us two equations that can be used to reconstruct the canonical system $H$ (equivalently, the Dirac system).
\par
\subsubsection{Reproducing Kernels for $w=0$ in $PW_x$}
Let's denote reproducing kernels for $w=0$ in $PW_x$ by $J_0(x,z)$, then there exist $j(x,t)\in L^2(-x,x)$ such that $J_0(x,z)=\int_{-x}^xj(x,t)e^{izt}dt$. Since $\forall F=\int f e^{izt}dt\in PW_x$, we have $[J_0,F]_{\phi,x}=F(0)$, therefore it follows
\[F(0)=\int_{-x}^xfdt=\langle1,f\rangle_{L^2(-x,x)}=2\langle j,(1+T_\phi^x)f\rangle\]
As the operator $1+T_\phi^x$ is self-adjoint, we actually have
\[\langle1,f\rangle_{L^2(-x,x)}=\langle2(1+T_\phi^x)j,f\rangle\]
Since $f$ is arbitrary in $L^2(-x,x)$, we conclude that 
\[(1+T_\phi^x)j=\frac{1}{2}\]
or equivalently,
\begin{equation}
j(x,t)+\int_{-x}^x\phi (t-s)j(x,s)ds=\frac{1}{2} 
\end{equation}
on $t\in [-x,x]$.
\subsubsection{Conjugate Kernels for $w=0$ in $PW_x$}
The signal function, denoted by $sgn(x)$, is the function defined by
\[sgn(x):=\left\{\begin{array}{rcl}
1,&&x\in [0,\infty)\\
-1,&&x\in (\infty,0)
\end{array}\right.\]
For $\phi \in \Phi_N$, we define a function $\Phi(x)$ by
\[\Phi(x)=\int_0^x\phi(s)ds\]
we also follow the convention: if $x<0$, then $\int_0^x\phi(s)ds=-\int^0_x\phi(s)ds$. Also notice that we have the property $\overline{\Phi}(x)=-\Phi(-x)$.
\par
We define a function by $\psi(s)=(2\Phi(s)+sgn(s))i$, then 
define a bounded linear functional for $F=\widehat{
f}\in PW_x$ as follows:
\[\widehat{F}(0):=\int_{-x}^x\overline{\psi}(t)f(t)dt\]
The following lemma is used later:
\begin{lemma}
\label{l4.6}
For all $F=\widehat{f},G=\widehat{g}\in PW_x$,we have 
\[\widehat{F}(0)\overline{G(0)}-F(0)\overline{\widehat{G}(0)}=[S_0G,F]_{\phi,x}-[G,S_0F]_{\phi,x}\]    \end{lemma}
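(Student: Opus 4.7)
The strategy is to compute both sides as explicit bilinear forms in the Fourier pre-images $f, g \in L^2(-x,x)$ of $F, G$ and verify they agree. A preliminary step identifies $S_0 F$ as $\widehat{\widetilde{f}}$ for an explicit $\widetilde{f} \in L^2(-x,x)$: writing $(e^{izt}-1)/z = i\int_0^t e^{izs}\,ds$ and applying Fubini yields $\widetilde{f}(s) = iF_1(s) - iF(0)\chi_{(-x,0)}(s)$, where $F_1(s) := \int_s^x f(t)\,dt$; an analogous formula defines $\widetilde{g}$.

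Next I would expand the right-hand side using $[F,H]_{\phi,x} = 2\langle f, (1+T_\phi^x)h\rangle$ and split it into an ``identity part''
\[A := 2\langle \widetilde{g}, f\rangle - 2\langle g, \widetilde{f}\rangle\]
and a ``convolution part''
\[B := 2\langle \widetilde{g}, T_\phi^x f\rangle - 2\langle g, T_\phi^x \widetilde{f}\rangle.\]
For $A$, substituting the explicit form of $\widetilde{f}, \widetilde{g}$ and applying the integration-by-parts identity $\int_{-x}^x (\overline{G_1} F_1)' \, dt = -\overline{G(0)}F(0)$ (which holds because $F_1(x) = G_1(x) = 0$ and $F_1(-x) = F(0)$, $G_1(-x) = G(0)$) collapses the four resulting terms to $A = -2i\overline{G(0)}\int_0^x f\,dt + 2iF(0)\int_{-x}^0 \overline{g}\,dt$.

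The more delicate computation is $B$. I would integrate by parts using that $\Phi$ is an antiderivative of $\phi$: one obtains $\int_{-x}^x \overline{G_1(t)}\phi(t-s)\,dt = -\overline{G(0)}\Phi(-x-s) + \int_{-x}^x \overline{g(t)}\Phi(t-s)\,dt$ and $\int_{-x}^x F_1(s)\phi(t-s)\,ds = F(0)\Phi(t+x) - \int_{-x}^x f(s)\Phi(t-s)\,ds$, while the $\chi_{(-x,0)}$-pieces produce terms containing $\Phi(-s) - \Phi(-x-s)$ and $\Phi(t+x) - \Phi(t)$. When these are assembled, the double integral $\iint \overline{g(t)}\Phi(t-s)f(s)\,ds\,dt$ cancels and the $\Phi(\pm x-\cdot)$ artefacts cancel in pairs, leaving $B = 2i\overline{G(0)}\int f(s)\Phi(-s)\,ds - 2iF(0)\int \overline{g(t)}\Phi(t)\,dt$.

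Finally I would expand the left-hand side using $\psi(s) = (2\Phi(s)+\mathrm{sgn}(s))i$ and $\overline{\Phi}(t) = -\Phi(-t)$, obtaining a ``$\Phi$-contribution'' that matches $B$ verbatim and a ``$\mathrm{sgn}$-contribution'' that differs from $A$ by exactly $i\overline{G(0)}F(0) - iF(0)\overline{G(0)} = 0$. The main obstacle is purely organizational: tracking the numerous boundary terms produced by the integrations by parts and verifying that the $\Phi$-artefacts at $\pm x$ disappear in pairs. In fact the precise coefficients in $\psi = (2\Phi+\mathrm{sgn})i$ are dictated exactly by this cancellation, so the lemma is ultimately a verification that $\psi$ has been defined correctly.
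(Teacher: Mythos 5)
Your proposal is correct, and it is the same underlying approach as the paper: you identify $S_0F$ with the Fourier transform of a primitive (your $\widetilde f$ equals $-i$ times the paper's $s_0F$), then integrate by parts, pushing $\phi$ to its antiderivative $\Phi$ and tracking the boundary terms. You organize the computation differently — splitting into an "identity part" $A$ and "convolution part" $B$ and matching each explicitly against the $\mathrm{sgn}$ and $\Phi$ pieces of $\psi$ — where the paper instead equates two integration-by-parts expressions for the symmetric quantity $\langle I_g,(1+T_\phi^x)f\rangle+\langle (1+T_\phi^x)g,I_f\rangle$ and relegates the rest to "a tedious calculation," but the underlying mechanism and the cancellations are the same.
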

\begin{proof}
Let's write $s_0F(t):=I_f(t)-\chi_{(0,x)}(t)F(0)$, where $I_f(t)=\int_{-x}^tf(t)dt$.   Notice that $I_f(x)=F(0)$, then it is easy to show that $S_0F(z)=-i\widehat{s_0F}(z)$ as Fourier transform, as a consequence, we have
\[[S_0G,F]_{\phi,x}=2i\langle s_0G,(1+T_\phi^x)f\rangle\]
\[[G,S_0F]_{\phi,x}=-2i\langle(1+T_\phi^x)g,s_0F\rangle\]
Notice that
\[\langle(1+T_\phi^x)I_g,f\rangle=\overline{(1+T_\phi^x)I_g}\cdot I_f|_{-x}^x-\langle (1+T_\phi^x)g,I_f\rangle+\overline{G(0)}\int_{-x}^x\phi(x-t)I_f(t)dt\]
and 
\[\langle I_g,(1+T_\phi^x)f\rangle=\langle(1+T_\phi^x)I_g,f\rangle\]
We conclude that 
\[\langle I_g,(1+T_\phi^x)f\rangle+\langle (1+T_\phi^x)g,I_f\rangle=F(0)\overline{(1+T_\phi^x)I_g}(x)+\overline{G(0)}\int_{-x}^x\phi(x-t)I_f(t)dt\]
Also, since $\langle (1+T_\phi^x)g, I_f\rangle=\langle g,(1+T_\phi^x)I_f\rangle$, the analogous equation holds as well once we apply the integration by part to $\langle g,(1+T_\phi^x)I_f\rangle$:
\[\langle I_g,(1+T_\phi^x)f\rangle+\langle (1+T_\phi^x)g,I_f\rangle=\overline{G(0)}(1+T_\phi^x)I_f(x)+F(0)\int_{-x}^x\phi(t-x)\overline{I_g(t)}dt\]
Therefore, we get
\[F(0)\overline{(1+T_\phi^x)I_g}(x)+\overline{G(0)}\int_{-x}^x\phi(x-t)I_f(t)dt=\overline{G(0)}(1+T_\phi^x)I_f(x)+F(0)\int_{-x}^x\phi(t-x)\overline{I_g(t)}dt\]
Notice that
\[T_\phi^xI_f(x)=\int_{-x}^x\phi(x-t)I_f(t)dt=\int_{-x}^x\Phi(x-t)f(t)dt\]
\[\langle (1+T_\phi^x)\chi_{(0,x)},f\rangle=\langle \chi_{(0,x)},f\rangle+\int_{-x}^x(\Phi(x-t)-\Phi(-t))f(t)dt\]
A tedious calculation then shows that 
\[[S_0G,F]_{\phi,x}-[G,S_0F]_{\phi,x}=\widehat{F}(0)\overline{G(0)}-F(0)\overline{\widehat{G}(0)}\]
\end{proof}
\par
Now, let's pivot back to $\widehat{F}(0)$. As a bounded linear functional on $PW_x$, the Rieze representation theorem guarantees that there must be a unique $K_0(x,z)= \int_{-x}^xk(x,t)e^{izt}dt\in PW_x$ so that $[K_0,F]_{\phi,x}=\widehat{F}(0)$ for all $F=\widehat{f}\in PW_x$. In other words, we get 
\[\langle2(1+T_\phi^x)k,f\rangle=\int_{-x}^x\overline{\psi}(t)f(t)dt\]
i.e., $(1+T_\phi^x)k=\frac{1}{2}\psi$, or equivalently,
\begin{equation}
k(x,t)+\int_{-x}^x\phi (t-s)k(x,s)ds=\frac{1}{2} 
\psi(t)\end{equation}
on $t\in [-x,x]$.
\subsubsection{Regularity Properties of the Integral Equations}
Those two integral equations (12) and (13) can be summarized as the following integral equation:
\begin{equation}
    p(x,t)+\int_{-x}^x\phi (t-s)p(x,s)ds=g(x,t)
\end{equation}
for $t\in [-x,x]$ in some proper spaces; moreover, if $x=0$, we define $p(0,0)=g(0,0)$.
\par
To simplify our notations, let's define a triangle with a number $m\in (0,N]$ by
\[\Delta_m:=\{(x,t)\in \mathbb{R}^2:0<|t|<x<m\}\]
It is clear that if $g(x,t)=\frac{1}{2}$ and $g(x,t)=\frac{1}{2}\psi(t)$, we go back to (12) and (13), respectively.
\par 
We define three operators $L^x_\phi,C^x_\phi$ and $H^x_\phi$ (recall $T^x_\phi$) for $x\in(0,N]$:
\begin{definition}
 \[L^x_\phi:L^1[-x,x]\to L^1[-x,x], L^x_\phi f:=\int_{-x}^x\phi(t-s)f(s)ds\] 
 \[C^x_\phi:C[-x,x]\to C[-x,x], C^x_\phi f:=\int_{-x}^x\phi(t-s)f(s)ds\]
Here, $C[-x,x]$ is the Banach space of all continuous functions endowed with the supremum norm.
\end{definition}
As before, Young's inequality ensures that them are well-defined, and since $C[-x,x]\subset L^2[-x,x]\subset L^1[-x,x]$ as sets, if we pick up some function $f\in C[-x,x]$ for instance, then we do have $T^x_\phi f=L^x_\phi f=C^x_\phi f$.
\par
Let's define 
\[H[-x,x]:=C[-x,x]\oplus L\{\chi[0,x]\}\]
where $L$ means all linear combinations.
\par
We can treat $H[-x,x]$ as the direct sum of two Banach spaces, then the scalar multiplication and vector addition can be defined as usual so that $H[-x,x]$ becomes a vector space. For $(f,a\chi[0,x])\in H[-x,x]$, we define the norm (which is easy to check) as follows:
\[\rVert(f,a\chi[0,x])\lVert_{H[-x,x]}:=\rVert f\lVert_{C[-x,x]}+|a|\]
It's easy to see that $H[-x,x]$ with the norm above is a Banach space; moreover, $C[-x,x]$ can be embedded into $H[-x,x]$ isometrically. In all, we have the chain:
\[C[-x,x]\subset H[-x,x]\subset L^2[-x,x]\subset L^1[-x,x]\]
and the second $\subset$ may be interpreted as $f+a\chi[0,x]$ for $(f,a\chi[0,x])\in H[-x,x]$.
\par
Let $f\in C[-x,x]$. We have
\[\int_{-x}^x\phi(t-s)(f(s)+a\chi[0,x](s))ds=\int_{-x}^x\phi(t-s)f(s)ds+a\int_{t-x}^t\phi(s)ds\]
It is clear that the right-hand side is continuous; hence, we can define an operator $H^x_\phi$ as follows:
\begin{definition}
  \[H^x_\phi:H[-x,x]\to H[-x,x]\]
\[H^x_\phi(f,a\chi[0,x]):=(\int_{-x}^x\phi(t-s)(f(s)+a\chi[0,x](s))ds,0)\]  
\end{definition}
The following lemma can be found in many sources (\cite{15}, for example), but we restate it here for convenience.
\begin{lemma}
\label{l4.7}
Let $X,Y$ be Banach spaces. An operator $T:X\to Y$ has a continuous inverse if and only if 
\[\gamma_T:=\inf \{\lVert Tx\rVert: x\in D(T), \lVert x\rVert\geq 1\}>0.\]
In this case, we have $\lVert T^{-1}\rVert=\gamma_T^{-1}$.    
\end{lemma}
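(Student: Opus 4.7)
The plan is to handle both directions of the equivalence together by proving two matching bounds, $\|T^{-1}\| \leq \gamma_T^{-1}$ (giving the sufficient direction) and $\gamma_T \geq \|T^{-1}\|^{-1}$ (giving the necessary direction together with the norm identity).

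The key preliminary observation I would make is that, since $T$ is implicitly linear (otherwise ``continuous inverse'' does not carry its usual meaning), one can rescale: for every nonzero $x \in D(T)$, the vector $x/\|x\|$ has norm $1$ and hence lies in the admissible set $\{\|y\|\geq 1\}$, so $\|T(x/\|x\|)\| \geq \gamma_T$, which upgrades to the homogeneous inequality $\|Tx\| \geq \gamma_T \|x\|$ on all of $D(T)$. This single inequality does most of the work, and it shows \emph{en passant} that the infimum in the definition of $\gamma_T$ is already attained on the unit sphere $\{\|y\|=1\}$.

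For the ``if'' direction, the inequality $\|Tx\| \geq \gamma_T \|x\|$ forces injectivity, and substituting $y = Tx$ on the range of $T$ yields $\|T^{-1}y\| = \|x\| \leq \gamma_T^{-1}\|y\|$, so $T^{-1}$ is bounded with $\|T^{-1}\| \leq \gamma_T^{-1}$. For the ``only if'' direction, I would start from the tautology $\|x\| = \|T^{-1}Tx\| \leq \|T^{-1}\|\cdot\|Tx\|$; restricting to $\|x\|\geq 1$ gives $\|Tx\| \geq \|T^{-1}\|^{-1}$, so the defining infimum satisfies $\gamma_T \geq \|T^{-1}\|^{-1} > 0$. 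Combining the two bounds yields the asserted norm equality $\|T^{-1}\| = \gamma_T^{-1}$.

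There is no genuine technical obstacle; the proof is essentially a bookkeeping exercise from the definitions. The only point worth minor care is the homogeneity reduction above, since the hypothesis is phrased over $\{\|x\|\geq 1\}$ rather than the more customary unit sphere — but linearity makes the two infima equal, after which everything is immediate, and no completeness or open-mapping machinery is required.
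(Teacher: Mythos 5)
Your proof is correct and runs on the same mechanism as the paper's: both hinge on the homogeneity upgrade from the infimum over $\{\|x\|\ge 1\}$ to the pointwise estimate $\|Tx\|\ge\gamma_T\|x\|$ (the paper writes this implicitly when it passes from $\sup_x \|x\|/\|Tx\|$ to $1/\inf_x\|T(x/\|x\|)\|$), and both deduce the norm identity by pairing this with the trivial bound $\|x\|\le\|T^{-1}\|\,\|Tx\|$. The only cosmetic difference is in the ``only if'' direction, where you argue directly that $\gamma_T\ge\|T^{-1}\|^{-1}>0$, whereas the paper argues by dichotomy (if $\gamma_T=0$ then either $\ker T\neq\{0\}$ or a sequence $\|x_n\|=1$, $\|Tx_n\|\to 0$ witnesses unboundedness of $T^{-1}$); these are logically equivalent and neither is shorter in any material way.
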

\begin{proof}
 Let's assume $\gamma_T>0$ first. It follows that $\ker(T)=\{0\}$, hence the inverse exists. We have
\[\lVert T^{-1}\rVert=\sup\limits_{y\in D(T^{-1})}\lVert\frac{T^{-1}y}{\lVert y\rVert}\rVert=\sup\limits_{x\in D(T)}\lVert \frac{x}{\lVert Tx\rVert}\rVert=\frac{1}{\inf\limits_{x\in D(T)}\lVert T\frac{x}{\lVert x\rVert}\rVert}\]
This show that $T^{-1}$ is continuous and $\lVert T^{-1}\rVert=\gamma_T^{-1}$. 
\par
On the other hand, if $\gamma_T=0$, then either there is $x\neq 0$ such that $Tx=0$, i.e., $T$ has no inverse; or $T$ has the inverse but there is a sequence $\{x_n\}$ such that $\lVert x_n\rVert=1$ and $\lVert y_n\rVert:=\lVert Tx_n\rVert\to 0$; however, this implies that $\lVert\frac{T^{-1}y_n}{\lVert y_n\rVert}\rVert=\frac{1}{\lVert y_n\rVert}\to \infty$, i.e., $T^{-1}$ is not continuous.
\end{proof}
The following lemma is the foundation for using Fredholm theory.
\begin{lemma}
 $L^x_\phi,C^x_\phi,H^x_\phi$ are compact.   
\end{lemma}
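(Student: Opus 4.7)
The strategy mirrors the proof used for $T^x_\phi$ on $L^2[-x,x]$ in the previous lemma: approximate $\phi \in L^1(-2N,2N)$ by smooth compactly supported $\phi_n \in C_c^\infty(-2N,2N)$ with $\phi_n \to \phi$ in $L^1$, show that each of $C^x_{\phi_n}$, $L^x_{\phi_n}$, $H^x_{\phi_n}$ (constructed from a smooth kernel) is compact by an Arzel\`a--Ascoli argument, and then conclude by observing that Young's inequality yields operator-norm convergence of the approximants, so that compactness is inherited by the limit (the compact operators forming a norm-closed subspace of the bounded operators).

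For the smooth approximants $C^x_{\phi_n}$ and $L^x_{\phi_n}$, I would verify that the image of the closed unit ball is uniformly bounded and equicontinuous as a subset of $C[-x,x]$: uniform boundedness from $|(\phi_n*f)(t)|\le \|\phi_n\|_\infty\|f\|_{L^1}$ (resp. $\le \|\phi_n\|_{L^1}\|f\|_\infty$), and equicontinuity from the uniform continuity of the fixed smooth kernel on $[-2x,2x]$, which gives
\[
|(\phi_n*f)(t_1)-(\phi_n*f)(t_2)| \;\le\; 2x\,\|f\|\;\sup_{s}|\phi_n(t_1-s)-\phi_n(t_2-s)|.
\]
Arzel\`a--Ascoli then gives precompactness of the image in $C[-x,x]$, which embeds continuously into $L^1[-x,x]$. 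The Young-type bound
\[
\|C^x_{\phi_n}-C^x_\phi\|_{C\to C} + \|L^x_{\phi_n}-L^x_\phi\|_{L^1\to L^1} \;\le\; 2\|\phi_n-\phi\|_{L^1(-2N,2N)} \;\longrightarrow\; 0
\]
then promotes compactness of the approximants to compactness of $C^x_\phi$ and $L^x_\phi$.

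For $H^x_\phi$, I would use the explicit decomposition
\[
H^x_\phi(f,a\chi_{[0,x]}) \;=\; \bigl(C^x_\phi f + a\,G,\; 0\bigr), \qquad G(t):=\int_{t-x}^{t}\phi(u)\,du,
\]
noting that $G \in C[-x,x]$ because translation is continuous in $L^1$. The first summand $(f,a\chi_{[0,x]})\mapsto (C^x_\phi f,0)$ factors as the bounded projection onto the first coordinate, followed by the already-proved compact operator $C^x_\phi$, followed by the isometric embedding $C[-x,x]\hookrightarrow H[-x,x]$, hence is compact; the second summand $(f,a\chi_{[0,x]})\mapsto (aG,0)$ has rank one, so is trivially compact. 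The sum is therefore compact.

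No step looks technically serious. The only mild point worth flagging is recognizing, in the case of $H^x_\phi$, that extending the convolution operator to incorporate the one-dimensional $\chi_{[0,x]}$-summand contributes merely a rank-one perturbation to $C^x_\phi$, so no genuinely new compactness analysis is needed in the direct-sum space beyond Arzel\`a--Ascoli for $C^x_\phi$.
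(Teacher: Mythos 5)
Your proof takes essentially the same route as the paper: approximate $\phi$ by $\phi_n \in C_c^\infty(-2N,2N)$, establish compactness of the smooth-kernel approximants via Arzel\`a--Ascoli, and pass to the operator-norm limit using Young's inequality. Your treatment of $L^x_\phi$ (image of the unit ball is precompact in $C[-x,x]$, which embeds continuously into $L^1$) matches the paper's remark that the range of $L^x_{\phi_n}$ lies in $C[-x,x]$. The one place you go a bit further than the paper is $H^x_\phi$: the paper simply asserts \enquote{the same idea can be applied}, whereas you give the cleaner explicit decomposition of $H^x_\phi$ as (embedding)$\circ C^x_\phi\circ$(projection) plus the rank-one operator $(f,a\chi_{[0,x]})\mapsto(aG,0)$, which makes the compactness in the direct-sum space $H[-x,x]$ transparent without re-running Arzel\`a--Ascoli there. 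A minor cosmetic point: in your equicontinuity display the factor $2x\|f\|$ is the right bound for $f\in C[-x,x]$ (with $\|f\|=\|f\|_\infty$), but for the $L^1$ case you should just use $\|f\|_{L^1}\sup_s|\phi_n(t_1-s)-\phi_n(t_2-s)|$ without the $2x$; this does not affect the argument.
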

\begin{proof}
Pick up $\phi_n\in C^{\infty}_c(-2N,2N)$ so that $\phi_n\to \phi$ in $L^1(-2N,2N)$ and $||\phi_n-\phi||_{L^1}<1$ (Corollary \ref{c4.1}),  we first claim that $C^x_{\phi_n}$ are compact.
\par
Indeed, Let $\{f_m\}\subset C[-x,x]$ s.t. $\lVert f_m\rVert_{C[-x,x]}\leq 1$. Since $\lVert C^x_{\phi_n}f_m\rVert_{C[-x,x]}\leq \lVert \phi\rVert_{L^1}+1$, we get that $\{C^x_{\phi_n}f_m\}$ is uniformly bounded. On the other hand, since
\[|C^x_{\phi_n}f_m(t)-C^x_{\phi_n}f_m(t_0)|\leq \int_{-x}^x|\phi_n(t-s)-\phi_n(t_0-s)|ds\]
As $\phi_n$ is continuous and compactly supported, it follows that $\{C^x_{\phi_n}f_m\}$ is equicontinuous. Now Arzela-Ascoli theorem says that $C^x_{\phi_n}$ is compact. Also notice that $C^x_{\phi_n}\to C^x_{\phi}$ in $B(C[-x,x])$, we get that $C^x_{\phi}$ is compact as well.
\par
The same idea can be applied to $H^x_\phi$; for $L^x_\phi$, we just need to notice that the range of $L^x_{\phi_n}$ is in $C[-x,x]$, then the same idea also shows that $L^x_\phi$ is compact.
\end{proof}
\begin{theorem}
\label{t4.1}
  $1+T^x_\phi$, $1+L^x_\phi$, $1+C^x_\phi$, and $1+H^x_\phi$ are bijections.   
\end{theorem}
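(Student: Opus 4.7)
By the preceding lemma each of $T^x_\phi$, $L^x_\phi$, $C^x_\phi$, $H^x_\phi$ is compact on its respective Banach space, hence the four operators in question are all of the form $1+K$ with $K$ compact and are therefore Fredholm of index $0$ by the Riesz--Schauder theorem. Consequently it suffices to verify injectivity (equivalently, injectivity of the Banach-space adjoint) of each of them. The plan is to dispatch $L^2$, $C$, and $H$ directly via the previous lemma and an embedding cascade, and then to treat the genuinely separate $L^1$ case by an adjoint bootstrap.

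Injectivity of $1+T^x_\phi$ on $L^2[-x,x]$ is already contained in the argument that $(PW_x,\lVert\cdot\rVert_{\phi,x})$ is Hilbert, where we established $\langle f,(1+T^x_\phi)f\rangle\geq\lambda_{\min}\lVert f\rVert^2$ for some $\lambda_{\min}>0$. For $1+C^x_\phi$ on $C[-x,x]$ I would use the embedding $C[-x,x]\subset L^2[-x,x]$ and the fact that $C^x_\phi$ and $T^x_\phi$ are defined by the same integral formula; any kernel element thus lies in $\ker(1+T^x_\phi)=\{0\}$. For $1+H^x_\phi$ on $H[-x,x]$ I would unwind the definition: if $(1+H^x_\phi)(f,a\chi[0,x])=0$ then the second coordinate forces $a=0$, and the first coordinate reduces to $(1+C^x_\phi)f=0$, which has just been excluded.

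The main obstacle is the case of $1+L^x_\phi$ on $L^1[-x,x]$, because $L^1[-x,x]\not\subset L^2[-x,x]$ and the positivity argument does not transfer. My plan here is to invoke Fredholm duality $\dim\ker(1+L^x_\phi)=\dim\ker((1+L^x_\phi)^*)$ under the $L^1$--$L^\infty$ pairing. A direct computation exploiting the symmetry $\phi(x)=\overline{\phi}(-x)$ shows that the Banach-space adjoint $(L^x_\phi)^*:L^\infty[-x,x]\to L^\infty[-x,x]$ is given by the very same convolution formula as $L^x_\phi$. Now suppose $g\in L^\infty[-x,x]$ satisfies $g+(L^x_\phi)^*g=0$; then $g$ equals, up to sign, the convolution of the compactly supported $L^1$-function $\phi$ with a bounded compactly supported function, and the standard translation-continuity of $L^1$ shows that this convolution is continuous. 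Hence $g\in C[-x,x]$, so $g\in\ker(1+C^x_\phi)=\{0\}$ by the previous step. Therefore $\ker((1+L^x_\phi)^*)=\{0\}$, and Riesz--Schauder then yields both surjectivity and injectivity of $1+L^x_\phi$ on $L^1[-x,x]$, completing the proof.
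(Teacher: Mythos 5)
Your argument is correct, and for $1+T^x_\phi$, $1+C^x_\phi$, and $1+H^x_\phi$ it follows the same route as the paper: the Fredholm index-$0$ reduction, the coercivity bound $\langle f,(1+T^x_\phi)f\rangle\geq\lambda_{\min}\lVert f\rVert^2$ on $L^2$, and then the embeddings $C[-x,x]\subset L^2[-x,x]$ and $H[-x,x]\subset L^2[-x,x]$ to force kernel elements of the two smaller-space operators into $\ker(1+T^x_\phi)=\{0\}$. (The paper phrases this as ``any eigenvalue of $1+C^x_\phi$ is an eigenvalue of $1+T^x_\phi$,'' which is the same embedding idea; your explicit unwinding of $H^x_\phi$ to get $a=0$ and then $(1+C^x_\phi)f=0$ is a slightly cleaner way to say the same thing.)

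Where you genuinely part ways with the paper is the $L^1$ case. The paper attacks surjectivity directly: take $g\in L^1[-x,x]$, approximate by $g_n\in C[-x,x]$, solve $(1+C^x_\phi)f_n=g_n$, observe that these are also solutions of $(1+L^x_\phi)f_n=g_n$, and invoke closedness of a Fredholm operator's range to conclude $\operatorname{Ran}(1+L^x_\phi)=L^1$, hence $\dim\ker=0$ by index $0$. You instead pass to the Banach-space adjoint on $L^\infty[-x,x]$: using the Hermitian symmetry $\phi(x)=\overline{\phi}(-x)$, the kernel $K(t,s)=\phi(t-s)$ satisfies $\overline{K(s,t)}=\phi(t-s)$, so the adjoint is given by the same convolution formula; then the $L^1$-translation-continuity of convolution against a compactly supported $L^1$ kernel shows that any $g\in\ker(1+(L^x_\phi)^*)$ is automatically continuous, hence lies in $\ker(1+C^x_\phi)=\{0\}$; finally index $0$ gives bijectivity. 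Both routes are sound. The paper's is marginally more elementary (no adjoint computation, just density and closed range), while yours is conceptually tidier in that all four cases reduce to exhibiting a trivial kernel, with the $L^1$ adjoint step mirroring the same ``drop into a smaller, nicer space'' move used for $C$ and $H$; the price is that you need the (standard, but extra) regularization fact that $\phi*g$ is continuous for $\phi\in L^1_{c}$, $g\in L^\infty_c$.
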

\begin{proof}
 Since all operators listed are Fredholm with index 0, we just need to show that their kernels are $\{0\}$. 
 \par
 For $1+T^x_\phi$, if $f\in L^2[-x,x]$, we have 
\[\langle f,(1+T^x_\phi)f\rangle=\langle f,(1+T^N_\phi)f\rangle \geq \lambda_{(min,N)}\lVert f\rVert^2_{L^2[-N,N]}=\lambda_{(min,N)}\lVert f\rVert^2_{L^2[-x,x]}\]
where $\lambda_{(min,N)}>0$ is the smallest eigenvalue of $1+T^N_\phi$.
\par
If $\gamma$ is an eigenvalue of $1+T^x_\phi$, then for a corresponding eigenvector $f_\gamma$, we must have 
\[\langle f_\gamma,(1+T^x_\phi)f_\gamma\rangle=\gamma\lVert f_\gamma\rVert^2\geq \lambda_{(min,N)}\lVert f_\gamma\rVert^2\]
Hence, it follows that all eigenvalues of $1+T^x_\phi$ are not less than $\lambda_{(min,N)}$; and as a consequence, we have 
\[\lVert(1+T^x_\phi)f\rVert^2=\int_{\sigma(1+T^x_\phi)}t^2d\lVert E_{\phi,x}(t)f\rVert^2\geq \lambda_{(min,N)}^2 \lVert f\rVert^2\]
where $E_{\phi,x}(t)$ is the spectral family of $1+T^x_\phi$.
\par
Now, Lemma \ref{l4.7} implies that $1+T^x_\phi$ is invertible with $\lVert (1+T^x_\phi)^{-1}\rVert\leq \lambda_{(min,N)}^{-1}$, specially, $\ker (1+T^x_\phi)=\{0\}$.
\par
For $1+C^x_\phi$, since any eigenvalue of $1+C^x_\phi$ must be an eigenvalue of $1+T^x_\phi$, hence all eigenvalues of $1+C^x_\phi$ are not less than $\lambda_{(min,N)}$, i.e., 0 cannot be an eigenvalue, thus $\ker (1+C^x_\phi)=\{0\}$. Moreover, this argument works for $1+H^x_\phi$.
\par
For $1+L^x_\phi$, we change our strategy: we will show that $Ran(1+L^x_\phi)=L^1[-x,x]$, hence $dim\ker(1+L^x_\phi)=dim(coker(1+L^x_\phi))=dim(L^1\setminus Ran(1+L^x_\phi)) =0$. Indeed, Let $g\in L^1[-x,x]$, then we have $\{g_n\}\subset C[-x,x]$ so that $g_n\to g$ in $L^1[-x,x]$. Since we've already proved that $1+C^x_\phi$ is a bijection, then there must be $\{f_n\}\subset C[-x,x]$ so that $(1+C^x_\phi)f_n=g_n$; moreover, this fact implies  $(1+L^x_\phi)f_n=g_n$, i.e., $\{g_n\}\subset Ran(1+L^x_\phi)$. Since we know that $Ran(1+L^x_\phi)$ is closed (for a Fredholm operator), it follows that $g\in Ran(1+L^x_\phi)$ as the limit of $\{g_n\}$.
\end{proof}
Let's pivot back to $1+H^x_\phi$ again. 
\par
Suppose $g=(g_c,a\chi_{[0,x]})\in H[-x,x]$, then there is a unique $f=(f_c,a\chi_{[0,x]})\in H[-x,x]$ such that $(1+H^x_\phi)f=g$, i.e., 
\[f_c(x,t)+\int_{-x}^x\phi(t-s)f_c(x,s)ds=g_c(x,t)-a\int_{t-x}^t\phi(s)ds\]
This gives the unique solution $f_c+a\chi_{[0,x]} \in L^2[-x,x]$ of the equation $(1+T^x_\phi)f=g_c+a\chi_{[0,x]}$.
\par
Especially, recall the integral equation (13) and notice that 
\[\frac{1}{2}\psi(t)=(i(\Phi(s)-\frac{1}{2}),i\chi[0,x](s)) \in H[-x,x]\]
we have 
\[k(x,t)=k_c(x,t)+i\chi_{[0,x]}\]
where $k_c \in C[-x,x]$ is the solution of 
\[k_c(x,t)+\int_{-x}^x\phi(t-s)k_c(x,s)ds=i\Phi(t-x)-\frac{1}{2}i\]
The argument above implies that it is enough to assume $g$ is continuous in the equation $(14)$.
\begin{theorem}
\label{t4.2}
Assume $g(x,t) \in C(\overline{\Delta_N})$. For any $x\in [0,N]$, $p(x,t)\in C[-x,x]$  is the solution of 
\[p(x,t)+\int_{-x}^x\phi (t-s)p(x,s)ds=g(x,t)\]
We have
\begin{enumerate}
    \item $p(x,t) \in C(\overline{\Delta_N})$;
    \item Under an extra assumption that for all fixed $x\in (0,N]$, $g(x,t) \in AC[-x,x]$, we have that $p(x,t) \in AC[-x,x]$ with respect to $t$. Moreover, the partial derivative $p_t(x,t)$ satisfies
\[p_t(x,t)+\int_{-x}^x\phi(t-s)p_t(x,s)ds=g_t(x,t)+\phi(t-x)p(x,x)-\phi(t+x)p(x,-x)\]
where $g_t(x,t)$ is the partial derivative of $g$ with respect to $t$.
\end{enumerate}
\end{theorem}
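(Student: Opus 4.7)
The essential ingredient is a uniform resolvent bound $\|(1+C^x_\phi)^{-1}\|_{C[-x,x]\to C[-x,x]}\leq M$, independent of $x\in[0,N]$. I derive this in two steps: first, by extending $f\in L^2[-x,x]$ by zero into $L^2[-N,N]$ and noting that $(T^N_\phi f)|_{[-x,x]}=T^x_\phi f$ for such $f$, the positivity bound $\langle f,(1+T^N_\phi)f\rangle\geq\lambda_{(min,N)}\|f\|^2$ from Theorem~\ref{t4.1} transfers to $1+T^x_\phi$, giving $\|(1+T^x_\phi)^{-1}\|_{L^2\to L^2}\leq\lambda_{(min,N)}^{-1}$ uniformly in $x$; second, splitting $\phi=\phi_b+\phi_s$ with $\phi_b$ bounded and $\|\phi_s\|_{L^1}<\tfrac12$, the estimate $\|g\|_\infty\leq\|f\|_\infty+\|\phi_b\|_\infty\sqrt{2N}\|g\|_{L^2}+\tfrac12\|g\|_\infty$ (Cauchy--Schwarz on the bounded piece, crude $L^1$ control on the small piece) combines with the $L^2$ bound on $g=(1+C^x_\phi)^{-1}f$ to produce the desired $M$.

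With this in hand, for $x_n\to x_0$ (WLOG $x_n\geq x_0$) I extend $p_0:=p(x_0,\cdot)$ by constant continuation outside $[-x_0,x_0]$ to obtain $\tilde p_0\in C[-x_n,x_n]$ and set $r_n:=p(x_n,\cdot)-\tilde p_0$. The defining equation gives $(1+C^{x_n}_\phi)r_n=g(x_n,\cdot)-(1+C^{x_n}_\phi)\tilde p_0$, and I check that the right-hand side tends to zero in $C[-x_n,x_n]$: on $[-x_0,x_0]$ it reduces to $g(x_n,t)-g(x_0,t)-\int_{x_0<|s|\leq x_n}\phi(t-s)\tilde p_0(s)\,ds$, both terms vanishing uniformly in $t$ by uniform continuity of $g$ on $\overline{\Delta_N}$ and uniform continuity of $a\mapsto\int_0^a|\phi|$ on a compact set; on the thin slabs $(x_0,x_n]$ and $[-x_n,-x_0)$ a direct limit argument at $t=x_0$ suffices. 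Applying the uniform bound $M$ yields $\|r_n\|_{C[-x_n,x_n]}\to 0$, which together with continuity of $p_0$ gives joint continuity on $\overline{\Delta_N}$.

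\textbf{Plan for Part 2.} I approximate $\phi$ by smooth $\phi_n\in C_c^\infty(-2N,2N)\cap\Phi_N$ via Corollary~\ref{c4.1} and let $p_n$ be the corresponding solution. For $\phi_n\in C^1$, $L^x_{\phi_n}p_n(t)=\int\phi_n(t-s)p_n(s)\,ds$ is $C^1$ in $t$, hence $p_n(x,\cdot)$ is AC in $t$; using $\tfrac{d}{dt}\phi_n(t-s)=-\tfrac{d}{ds}\phi_n(t-s)$ and integrating by parts in $s$ yields
\[
(1+L^x_{\phi_n})p_n'(t)=g_t(x,t)+\phi_n(t-x)p_n(x,x)-\phi_n(t+x)p_n(x,-x).
\]
Now $p_n\to p$ uniformly on $[-x,x]$ (write $p_n-p=-(1+C^x_\phi)^{-1}(C^x_{\phi_n}-C^x_\phi)p_n$ and use the $C$-resolvent bound, $\|\phi_n-\phi\|_{L^1}\to 0$, and a bootstrap for $\|p_n\|_\infty$), and $(1+L^x_{\phi_n})^{-1}\to(1+L^x_\phi)^{-1}$ in $B(L^1[-x,x])$ by continuity of inversion. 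The right-hand side above therefore converges in $L^1[-x,x]$ to $g_t+\phi(\cdot-x)p(x,x)-\phi(\cdot+x)p(x,-x)$, so $p_n'\to q$ in $L^1$ where $q$ solves the target equation. Passing to the limit in $p_n(t)=p_n(-x)+\int_{-x}^t p_n'(s)\,ds$ then gives $p(x,t)=p(x,-x)+\int_{-x}^t q(s)\,ds$, so $p(x,\cdot)\in AC[-x,x]$ with $p_t=q$ a.e.\ satisfying the displayed equation.

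\textbf{Main obstacle.} The principal technical hurdle is the uniform-in-$x$ resolvent bound on $(1+C^x_\phi)^{-1}\colon C[-x,x]\to C[-x,x]$ used in Part 1. Without it, pointwise smallness of the defect $(1+C^{x_n}_\phi)\tilde p_0-g(x_n,\cdot)$ cannot be transferred to a sup-norm bound on $r_n$, blocking the passage to joint continuity; the same bound underpins the uniform convergence $p_n\to p$ in Part 2. The decomposition of $\phi$ into a bounded part and an $L^1$-small part, combined with the spectral lower bound from Theorem~\ref{t4.1}, is the essential device that makes the argument close.
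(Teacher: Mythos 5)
Your proposal is correct, and Part~2 essentially reproduces the paper's argument (approximate $\phi$ by smooth $\phi_n$, differentiate the mollified equation to get the equation for $(p_n)_t$, then pass to the limit in $L^1$ after establishing convergence of the resolvents and of $p_n$). Part~1, however, takes a genuinely different route. The paper handles continuity at $(0,0)$ by a Neumann-series/small-norm argument and handles the region $x\geq N_1>0$ by the substitution $h(x,t)=(x,t/x)$, which maps everything to the fixed interval $[-1,1]$; it then proves (Lemma~\ref{l4.11}) that the rescaled kernel operator $x\mapsto K_\phi^x$ is norm-continuous in $B(C[-1,1])$, and deduces continuity of $q=p\circ h^{-1}$ from a standard perturbation identity. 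You instead avoid the rescaling entirely: you extract a uniform-in-$x$ bound $\sup_{x\in[0,N]}\lVert(1+C_\phi^x)^{-1}\rVert_{C\to C}\leq M$ by bootstrapping the uniform $L^2$ resolvent bound (from the positivity transfer $\lambda\geq\lambda_{\min,N}$ already used in Theorem~\ref{t4.1}) through a split $\phi=\phi_b+\phi_s$ with $\phi_b$ bounded and $\lVert\phi_s\rVert_{L^1}<\tfrac12$; you then extend $p(x_0,\cdot)$ by constants, form the defect $r_n$, and show it vanishes in sup-norm by absolute continuity of $\int|\phi|$ and uniform continuity of $g$. Both arguments are sound. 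Your version is more elementary in that it needs no change of variables, and the uniform $C$-resolvent bound you derive is itself a useful byproduct --- the paper also establishes such a uniform bound, but only later (in the proof of Corollary~\ref{c4.3}), via continuity of the quantity $\gamma_{C_\phi}(x)$, which in turn leans on the $K_\phi^x$-continuity lemma. What the paper's rescaling buys in exchange is precisely that continuity lemma, which it then reuses in Corollary~\ref{c4.3}; your route would have to supply an alternative there. One small remark: the uniform-in-$x$ bound is not actually needed in your Part~2, where $x$ is fixed throughout, so the claim that it ``underpins'' the convergence $p_n\to p$ there is an overstatement, though harmless.
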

\begin{corollary}
\label{c4.3}
  Assume $g(x,t)=\frac{1}{2}$ or $g(x,t)=i\Phi(t-x)-\frac{1}{2}i$.  For any $x\in (0,N]$, we denote by $p_x(x,t)$ (\textquotedblleft partial derivative\textquotedblright about $x$) the solution of
\[p_x(x,t)+\int_{-x}^x\phi(t-s)p_x(x,s)ds=g_x(x,t)-\phi(t-x)p(x,x)-\phi(t+x)p(x,-x)\]
where $g_x(x,t)$ is the partial derivative of $g$ with respect to $x$.
\begin{enumerate}
    \item $\int_{-x}^xp_x(x,s)ds$ is continuous, and $\int_{-x}^x|p_x(x,s)|ds \in L^1(0,N)$;
    \item for any $x\in [0,N]$ and $h(x)$ a bounded measurable function, we have
\[\int_{-x}^xh(s)p(x,s)ds=\int_{-x}^xh(s)p(|s|,s)ds+\int_0^xdm\int_{-m}^mh(s)p_x(m,s)ds\]
\end{enumerate}
\end{corollary}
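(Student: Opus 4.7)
The plan is to first establish part 1 (existence and integrability of $p_x$), then derive the pointwise identity
\[
p(x,t) - p(|t|,t) = \int_{|t|}^x p_x(m,t)\,dm \quad \text{for } (x,t) \in \overline{\Delta_N}, \qquad (\star)
\]
from which part 2 follows immediately: multiplying by $h(t)$, integrating over $t \in [-x,x]$, and swapping the order of integration via $\{(m,s): |s| \leq m \leq x\} = \{(m,s): 0 \leq m \leq x,\ |s| \leq m\}$ produces exactly the claimed formula.

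For part 1, for each fixed $x \in (0,N]$ the data $r(x,t) := g_x(x,t) - \phi(t-x)p(x,x) - \phi(t+x)p(x,-x)$ lies in $L^1[-x,x]$ with norm bounded uniformly in $x$ by $c\lVert\phi\rVert_{L^1}(1 + \lVert p\rVert_{C(\overline{\Delta_N})})$, using $p \in C(\overline{\Delta_N})$ (Theorem \ref{t4.2}) and that $g_x$ is explicit (either $0$ or $-i\phi(t-x)$). Theorem \ref{t4.1} then produces a unique $p_x(x,\cdot) \in L^1[-x,x]$. To obtain the bound $\int_{-x}^x |p_x(x,s)|\,ds \leq C$ uniformly in $x$, one shows $\lVert (1+L^x_\phi)^{-1}\rVert_{L^1 \to L^1}$ is bounded uniformly for $x \in [0,N]$, by approximating $\phi$ with $\phi_n \in C^\infty_c \cap \Phi_N$ (Corollary \ref{c4.1}) and using Fredholm stability together with the known invertibility of $1+L^N_\phi$. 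Then $\int_{-x}^x |p_x(x,s)|\,ds \in L^\infty(0,N) \subset L^1(0,N)$, and continuity in $x$ follows from $L^1$-continuous dependence of $r(x,\cdot)$ on $x$ (inherited from continuity of $p$) combined with strong continuity of $(1+L^x_\phi)^{-1}$.

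For part 2, I would establish $(\star)$ by defining $\tilde p(x,t) := p(|t|,t) + \int_{|t|}^x p_x(m,t)\,dm$ and verifying the integral equation $(1+L^x_\phi)\tilde p(x,\cdot) = g(x,\cdot)$; uniqueness from Theorem \ref{t4.1} then forces $\tilde p = p$. After Fubini, the convolution term becomes $\int_0^x \int_{-m}^m \phi(t-s)p_x(m,s)\,ds\,dm$; for $m \geq |t|$ the inner integral equals $r(m,t) - p_x(m,t)$ by the $p_x$-equation, so applying FTC in $m$ to $g_x(m,t)$, $\phi(t-m)p(m,m)$, and $\phi(t+m)p(m,-m)$, and combining with the companion term $\int_{-x}^x \phi(t-s)p(|s|,s)\,ds$ via the $p$-equation evaluated at $x = |s|$, should collapse everything to $g(x,t)$.

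The main obstacle is the range $m \in [0,|t|)$ in the Fubini split, where the $p_x$-equation does not directly apply (it is stated only for $t' \in [-m,m]$). I expect the cleanest resolution is to first prove $(\star)$ for $\phi \in C^\infty_c \cap \Phi_N$, where $p(x,t)$ is jointly $C^1$ on $\overline{\Delta_N}$ and $(\star)$ reduces to classical FTC, and then pass to the limit for general $\phi \in \Phi_N$ using Corollary \ref{c4.1} together with the uniform $L^1 \to L^1$ estimates from part 1.
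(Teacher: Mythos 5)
Your final resolution --- approximate $\phi$ by $\phi_n \in C^\infty_c \cap \Phi_N$, establish $p_n(x,t) = p_n(|t|,t) + \int_{|t|}^x (p_n)_x(m,t)\,dm$ for smooth $\phi_n$, and pass to the limit via uniform $L^1$ estimates --- is exactly the route the paper takes in its own proof of this corollary, so the overall plan is sound. Two points deserve flagging.

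The $\tilde p$-plus-uniqueness scheme is blocked by more than the range $m<|t|$. Fubini the problematic piece in the other order:
\[\int_0^{|t|}\int_{-m}^m \phi(t-s)p_x(m,s)\,ds\,dm = \int_{-|t|}^{|t|}\phi(t-s)\int_{|s|}^{|t|}p_x(m,s)\,dm\,ds = \int_{-|t|}^{|t|}\phi(t-s)\bigl(\tilde p(|t|,s)-p(|s|,s)\bigr)\,ds.\]
Verifying $(1+L^x_\phi)\tilde p(x,\cdot)=g(x,\cdot)$ therefore already requires $\tilde p(|t|,\cdot)=p(|t|,\cdot)$ for $|t|<x$, i.e.\ the claim at smaller scales. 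That is a genuine circularity, not a bookkeeping issue, and it is presumably why the paper never formulates a pointwise $(\star)$ for general $\phi$ at all: it derives the integrated Part~2 identity directly from the $\phi_n$-level FTC together with the uniform $L^1$-convergence $(p_n)_x\to p_x$, Fubini, and the uniform convergence $p_n\to p$.

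On the uniform bound: the operators $1+L^x_\phi$ act on different spaces $L^1[-x,x]$, and the restriction of the invertible $1+L^N_\phi$ to functions supported on $[-x,x]$ is not $1+L^x_\phi$ (the convolution leaks off the subinterval), so ``Fredholm stability from the known invertibility of $1+L^N_\phi$'' does not produce a uniform-in-$x$ bound as stated. The paper's device is the substitution $(x,t)\mapsto(x,t/x)$, conjugating the whole family to operators $K^x_\phi$ on the fixed domain $[-1,1]$; Lemma~\ref{l4.11} gives norm-continuity of $x\mapsto K^x_\phi$, one checks $K^x_\phi\to 0$ as $x\to 0^+$, and compactness of $[0,N]$ then supplies the uniform lower bound on $\gamma_{L_\phi}(x)$. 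The same conjugation is what makes ``strong continuity of $(1+L^x_\phi)^{-1}$ in $x$'' a meaningful statement in your Part~1 argument. Finally, in the smooth case the paper obtains differentiability of $p_n$ in $x$ by computing both one-sided difference quotients from the integral equation (again conjugating by the scaling when approaching from the left), rather than by asserting joint $C^1$ regularity up front; that is the actual content of the ``classical FTC'' step.
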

The proofs are technical, so we postpone them. Readers can skip those two proofs temporarily and move to the next section directly. 
\subsection{the Canonical System Derived from $PW_N$ II}
Let's summarize what we have derived until now: we theoretically reconstruct a canonical system $H$ with $trH=1$ on $(0,N)$ and $PW_{x(t)}=B_t$ for all $t\in [0,N]$. We next want to show that the function $x(t)$ is redundant after a proper normalization. Most of the tools and ideas can be found in \cite{8}.
\par
We briefly introduce conjugate kernels and reproducing kernels for a canonical system; for more details, please see \cite{1} and \cite{8}. Let $H$ be a canonical system on $(0,N)$ without singular points (Lemma \ref{l4.5}), the regular dB space given by $E_N(z)=u_1(N,z)-iu_2(N,z)$ is 
\[B(E_N):=\{F(z)=Uf(z)=\int_0^Nu^*(x,\overline{z})H(x)f(x)dx:f\in L_H^2(0,N)\}\]
and the reproducing kernels are given by
\[J_w(z)=\int_0^Nu^*(x,w)H(x)u(x,z)dx\]
The conjugate kernels, $K_w(z):=\frac{v^*(N,w)Ju(N,z)-1}{z-\overline{w}}$, are given by 
\[K_w(z)=\int_0^Nv^*(x,w)H(x)u(x,z)dx=\int_0^Nu^*(x,\overline{z})H(x)v(x,\overline{w})dx\]
where $v$ is the solution of (2) with $v(0,z)=\begin{pmatrix}
    0\\
    1\\
\end{pmatrix}$.
\par
This integral form of $K_w(z)$ implies $K_w(z)\in B(E_N)$. If we define $\widetilde{F}(z):=[K_z,F]$ for $F(z)=\int_0^Nu^*(x,\overline{z})H(x)f(x)dx$, then 
\[\widetilde{F}(z)=\int_0^Nv^*(x,\overline{z})H(x)f(x)dx\]
Moreover, assume $0<N_1<N_2$ and $F\in B(E_{N_1})$, then 
\[[K_z^{(N_1)},F]_{B(E_{N_1})}=[K_z^{(N_2)},F]_{B(E_{N_2})}\]
The same as Lemma \ref{l4.6}, we have for all $F,G\in B(E_N)$, 
\[\widetilde{F}(0)\overline{G(0)}-F(0)\overline{\widetilde{G}(0)}=[S_0G,F]-[G,S_0F]\]
\par
Now, it is time to introduce this normalization:
\begin{lemma}
\label{l4.9}
  There is a canonical system $H\in L^1(0,N)$ so that for all $x\in[0,N]$ we have $PW_x=B_x$ (as de Branges spaces). Moreover, $H(x)$ can be chosen so that $\widehat{F}(0)=\widetilde{F}(0)$ for all $F\in PW_N=B_N$.  
\end{lemma}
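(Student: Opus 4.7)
The plan has two stages: first reparametrize the canonical system produced in Section~4.1 so that the chain indices agree directly ($PW_x=B_x$), then conjugate by a constant $SL(2,\mathbb R)$ matrix to align $\widehat F(0)$ and $\widetilde F(0)$.

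For the first stage, Lemma~\ref{l4.5} furnishes a canonical system on $(0,N)$ with no singular intervals such that $PW_{x(t)}=B_t$ with $x:[0,N]\to[0,N]$ continuous and strictly increasing. Letting $t=t(y)$ denote the inverse, I would put
\[\tilde H(y):=H\!\left(t(y)\right)\,t'(y),\qquad y\in(0,N).\]
Under this change of variable the fundamental solutions transform as $\tilde u(y,z)=u(t(y),z)$, so the de Branges space attached to $\tilde H$ on $(0,y)$ is $B_{t(y)}=PW_y$; hence $PW_x=B_x$ for every $x\in[0,N]$. Integrability $\tilde H\in L^1(0,N)$ is obtained from the change-of-variable identity $\int_0^N\operatorname{tr}\tilde H(y)\,dy=\int_0^N\operatorname{tr}H(t)\,dt<\infty$ combined with $\tilde H\ge 0$ (which upgrades $L^1$ of the trace to $L^1$ of each entry).

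For the second stage, both $F\mapsto\widehat F(0)$ and $F\mapsto\widetilde F(0)$ are bounded linear functionals on $B_N=PW_N$, and by Lemma~\ref{l4.6} together with its conjugate-kernel counterpart displayed just before the present lemma, both satisfy the same skew-symmetry identity
\[L(F)\,\overline{G(0)}-F(0)\,\overline{L(G)}=[S_0G,F]-[G,S_0F].\]
Subtracting, $D(F):=\widehat F(0)-\widetilde F(0)$ satisfies $D(F)\overline{G(0)}=F(0)\overline{D(G)}$ for all $F,G\in B_N$. Fixing any $G$ with $G(0)=1$ forces $D(F)=c\,F(0)$, and substituting back in the skew-symmetry relation forces $c\in\mathbb R$. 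To absorb this $c$, I replace $\tilde H$ by the conjugated coefficient $N^{T}\tilde H N$ with $N=\bigl(\begin{smallmatrix}1 & -c\\ 0 & 1\end{smallmatrix}\bigr)\in SL(2,\mathbb R)$; the transformed fundamental solution $N^{-1}\tilde u$ still has initial value $\binom{1}{0}$, the dB chain $\{B_x\}$ (hence $PW_x=B_x$) is preserved because $(\mathrm{Re}\,\tilde E_N,\mathrm{Im}\,\tilde E_N)$ transforms by an element of $SL(2,\mathbb R)$, but a direct computation using the standard identity $u^*(N,w)Ju(N,z)=(\bar w-z)J_w(z)$ shows that the new conjugate kernel becomes $K_w+c\,J_w$. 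Consequently the new $\widetilde F(0)$ equals $\widetilde F(0)+c\,F(0)=\widehat F(0)$, completing the normalization.

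The main obstacle I anticipate is the rigorous justification of the first stage: one must verify that $x(t)$ is absolutely continuous, equivalently that $t(y)$ is, so that $\tilde H$ is a genuine $L^1$ coefficient rather than a canonical-system measure with a singular part. The key input should be Lemma~\ref{l4.5} (no singular intervals for the original system) together with the continuity of the chain $y\mapsto PW_y=\bigcap_{y'>y}PW_{y'}=\overline{\bigcup_{y'<y}PW_{y'}}$, which should jointly rule out both jumps and Cantor-type singular parts of $x$.
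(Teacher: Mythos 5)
You correctly identify the two-stage strategy, which matches the paper: reparametrize via $\widetilde H(x)=t'(x)H(t(x))$ so that $PW_x=B_x$, then conjugate by a constant upper/lower triangular matrix to align $\widehat F(0)$ with $\widetilde F(0)$ (your sign convention differs from the paper's $H_c=\bigl(\begin{smallmatrix}1&0\\c&1\end{smallmatrix}\bigr)\widetilde H\bigl(\begin{smallmatrix}1&c\\0&1\end{smallmatrix}\bigr)$ but that is cosmetic). The derivation that $\widehat F(0)=\widetilde F(0)+cF(0)$ with $c$ real, and the observation that the final $SL(2,\mathbb R)$ conjugation preserves the chain while shifting the conjugate kernel to $K_w+cJ_w$, are both correct and mirror the paper.

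The genuine gap is exactly the one you flag but then dismiss too quickly: proving that $t(x)$ (the inverse of $x(t)$) is absolutely continuous, which is what makes $t'(x)$ an honest a.e.\ derivative, makes $\widetilde H=t'\cdot H\circ t$ land in $L^1(0,N)$, and makes your trace change-of-variable identity legitimate in the first place. Your proposed ingredients (no singular intervals via Lemma~\ref{l4.5} and the chain continuity $B_t=\overline{\bigcup_{s<t}B_s}=\bigcap_{s>t}B_s$) only yield that $x(t)$ is a continuous, strictly increasing bijection of $[0,N]$; they do not rule out a singular (Cantor-type) component in either $x(t)$ or $t(x)$, and at this point of the argument one cannot yet invoke $\det H>0$ a.e.\ since $\det H=1$ is only established later, in Theorem~\ref{t4.3}, which in turn cites Corollary~\ref{c4.4} and hence Lemma~\ref{l4.9}. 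The paper closes the gap by a concrete computation: using $t(x)=j_0(t(x),0)+\widetilde{k_0(t(x),\cdot)}(0)$ together with the relation $K_0(x(t),z)=k_0(t,z)+cj_0(t,z)$, it obtains the explicit expression
\[
t(x)=(c^2+1)\int_{-x}^x j(x,s)\,ds-c\int_{-x}^x\overline\psi(s)j(x,s)\,ds+\int_{-x}^x\overline\psi(s)k(x,s)\,ds-c\int_{-x}^x k(x,s)\,ds,
\]
and then applies part~2 of Corollary~\ref{c4.3} (with $p=j$ or $p=k_c$ and $h=1$ or $h=\overline\psi$) to show that each of these integrals is an absolutely continuous function of $x$. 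So the missing idea in your proposal is precisely the passage through the integral-equation regularity machinery of Theorem~\ref{t4.2} and Corollary~\ref{c4.3}, which gives a quantitative, not merely topological, handle on the reparametrization.
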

\begin{proof}
Two identities about $\widehat{F}(0),\widetilde{F}(0)$ imply for all $F,G\in B_N=PW_N$,
 \[(\widehat{F}(0)-\widetilde{F}(0))\overline{G(0)}=F(0)(\overline{\widehat{G}(0)}-\overline{\widetilde{G}(0)})\]
 If we choose a special $G\in PW_N$ such that $G(0)\neq 0$, and write $c:=\frac{\overline{\widehat{G}(0)}-\overline{\widetilde{G}(0)}}{\overline{G(0)}}$, then we have for all $F\in B_N=PW_N$
\[\widehat{F}(0)=\widetilde{F}(0)+cF(0)\]  
Moreover, by plugging $F=G$ into this equation, we get that the constant $c$ is real.
\par
In the sequel, to avoid confusion, we temporarily denote the reproducing kernels and conjugate kernels for $w=0$ of $B_t$ by $j_0(t,z)$ and $k_0(t,z)$, respectively. We have
$[K_0,F]_{\phi,N}=[K_0,F]=[k_0(N,\cdot),F]+c[j_0(N,\cdot),F]$, or equivalently,
\[K_0(N,z)=k_0(N,z)+cj_0(N,z)\]
Moreover, if $F$ is chosen from $PW_{x(t)}=B_t\subset PW_N$, then we also have
\begin{equation}
  K_0(x(t),z)=k_0(t,z)+cj_0(t,z)  
\end{equation}
Now, we invoke the formula (15.3) in \cite{8}:
\[B_t=\overline{\bigcup\limits_{s<t}B_s}=\bigcap\limits_{s>t}B_s\]
This conclusion also implies that $x(t)$ is continuous and strictly increasing, also see \cite{8} for more details.
\par
Denote by $t(x)$ the inverse of $x(t)$; then $t(x)$ is also strictly increasing and continuous. Since we have
\[j_0(t(x),0)=\int_0^{t(x)}u^*(s,0)H(s)u(s,0)ds=\int_0^{t(x)}H_{11}(s)ds\]
\[\widetilde{k_0(t(x),\cdot)}(0)=[k_0(t(x),\cdot),k_0(t(x),\cdot)]=[Uv(\cdot,0),Uv(\cdot,0)]\]
The second equation is from the unitary map $U$ in the spectral representation theorem and the fact that $H(x)$ has no singular intervals. Therefore, it follows that 
\[\widetilde{k_0(t(x),\cdot)}(0)=[v(\cdot,0),v(\cdot,0)]_{L^2_H(0,t(x))}=\int_0^{t(x)}H_{22}(s)ds\]
If we add those two equations together, then we have 
\[t(x)=\int_0^{t(x)}trH(s)ds=j_0(t(x),0)+\widetilde{k_0(t(x),\cdot)}(0)\]
To evaluate $\widetilde{k_0(t(x),\cdot)}(0)$, we apply the functional $\tilde{F}(0)$ to equation (15) to get
\[\widetilde{k_0(t(x),\cdot)}(0)=\widetilde{K_0(x,\cdot)}(0)-c\widetilde{j_0(t(x),\cdot)}(0)\]
Also recall that $j_0(t(x),z)=J_0(x,z)$, therefore we have 
\[\widetilde{K_0(x,\cdot)}(0)=\widehat{K_0(x,\cdot)}(0)-cK_0(x,0)\]
\[\widetilde{J_0(x,\cdot)}(0)=\widehat{J_0(x,\cdot)}(0)-cJ_0(x,0)\]
If we assemble all of those facts, we get
\[t(x)=(c^2+1)J_0(x,0)-c\widehat{J_0(x,\cdot)}(0)+\widehat{K_0(x,\cdot)}(0)-cK_0(x,0)\]
More explicitly, we have by definition that 
\[t(x)=(c^2+1)\int_{-x}^xj(x,s)ds-c\int_{-x}^x\overline{\psi}(s)j(x,s)ds+\int_{-x}^x\overline{\psi}(s)k(x,s)ds-c\int_{-x}^xk(x,s)ds\]
We want to show that $t(x)$ is absolutely continuous. From Corollary \ref{c4.3}, we have for $p(x,t)=j(x,t),k_c(x,t)$ and $h(s)=1,\overline{\psi}(s)$
\[\int_{-x}^xh(s)p(x,s)ds=\int_0^xh(s)(p(s,s)+p(s,-s))ds+\int_0^xdm\int_{-m}^mh(s)p_x(m,s)ds\]
namely, $t(x)$ is indeed absolutely continuous.
\par
Finally, we can use a standard procedure (see section 1.3,\cite{1}) to transform the canonical system to another one with the desired property: Let's define $\widetilde{u}(x)=u(t(x))$ and $\widetilde{H}(x)=t'(x)H(t(x))$. Since $t'\geq 0$ a.e., hence this $\widetilde{H}$ is in $L^1(0,N)$ and $\widetilde{H}\geq 0$ a.e.. Moreover, we have 
\[J\widetilde{u}'=z\widetilde{H}\widetilde{u}\]
By the definition of $\widetilde{u}$, we also have $\widetilde{B}_x=B_{t(x)}=H_x$.
\\
Finally, we write $H_c(x)=\begin{pmatrix} 
					1 & 0\\
					c & 1 \\
\end{pmatrix}\widetilde{H}\begin{pmatrix} 
					1 & c\\
					0 & 1 \\
\end{pmatrix}$. For this canonical system, we do have $\widehat{F}(0)=\widetilde{F}(0)$.
\end{proof}
Next, we may explicitly write down this normalized $H$ with the help of function $j$ and $k$ as follows:
\begin{corollary}
\label{c4.4}
The canonical system $H$ from Lemma \ref{l4.9} is given (a.e.) by
\[H_{11}(x)=j(x,x)+j(x,-x)+\int_{-x}^xj_x(x,s)ds\]
\[H_{12}(x)=H_{21}(x)=k(x,x)+k(x,-x)+\int_{-x}^x(k_c)_x(x,s)ds\]
\[H_{12}(x)=H_{21}(x)=\overline{\psi}(x)j(x,x)+\overline{\psi}(-x)j(x,-x)+\int_{-x}^x\overline{\psi}(s)j_x(x,s)ds\]
\[H_{22}(x)=\overline{\psi}(x)k(x,x)+\overline{\psi}(-x)k(x,-x)+\int_{-x}^x\overline{\psi}(s)(k_c)_x(x,s)ds\]
\end{corollary}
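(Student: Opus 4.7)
The plan is to read off each entry of $H$ from the integrals $\int_0^x H_{ij}(s)\,ds$, which arise naturally as the values at $z=0$ of the reproducing kernel $j_0(x,\cdot)$ and the conjugate kernel $k_0(x,\cdot)$ of the canonical system, and then to differentiate using Corollary~\ref{c4.3}. The normalization from Lemma~\ref{l4.9} forces $PW_x=B_x$ and $\widehat{F}(0)=\widetilde{F}(0)$ for all $F\in B_N$, so the canonical-system kernels coincide with the PW-side kernels: $j_0(x,\cdot)=J_0(x,\cdot)$ and $k_0(x,\cdot)=K_0(x,\cdot)$, and $\widetilde{j_0(x,\cdot)}(0)=\widehat{J_0(x,\cdot)}(0)$, $\widetilde{k_0(x,\cdot)}(0)=\widehat{K_0(x,\cdot)}(0)$.

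Since $u(s,0)=(1,0)^{\top}$ and $v(s,0)=(0,1)^{\top}$ at $z=0$, the integral representations of the kernels give
\[
\int_0^x H_{11}(s)\,ds=J_0(x,0),\qquad \int_0^x H_{12}(s)\,ds=K_0(x,0)=\widetilde{j_0(x,\cdot)}(0),\qquad \int_0^x H_{22}(s)\,ds=\widetilde{k_0(x,\cdot)}(0).
\]
Using the definitions of $J_0$, $K_0$, and the functional $\widehat{F}(0)$, each right-hand side becomes $\int_{-x}^x h(s)\,p(x,s)\,ds$ for the four combinations $(p,h)\in\{(j,1),(k,1),(j,\overline{\psi}),(k,\overline{\psi})\}$, producing four different $\int_0^x H_{ij}$ expressions.

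I then apply Corollary~\ref{c4.3}(2): splitting $|s|=s$ or $|s|=-s$ according to the sign of $s$, the identity rewrites as
\[
\int_{-x}^x h(s)\,p(x,s)\,ds=\int_0^x\bigl[h(s)p(s,s)+h(-s)p(s,-s)\bigr]\,ds+\int_0^x dm\int_{-m}^m h(s)\,p_x(m,s)\,ds,
\]
whose right-hand side is absolutely continuous in $x$ by part~(1). Differentiating a.e.\ yields
\[
\frac{d}{dx}\int_{-x}^x h(s)\,p(x,s)\,ds=h(x)p(x,x)+h(-x)p(x,-x)+\int_{-x}^x h(s)\,p_x(x,s)\,ds.
\]
Applied to the pairs $(j,1)$ and $(j,\overline{\psi})$ this directly produces the formulas for $H_{11}$ and one version of $H_{12}$.

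The only subtlety, and the main obstacle, is that Corollary~\ref{c4.3} assumes continuous data, whereas $k=k_c+i\chi_{[0,x]}$ has a jump. I therefore apply the corollary to $k_c$ and handle the step part by hand: $\int_{-x}^x i\chi_{[0,x]}(s)\,ds=ix$ differentiates to $i$, which, since $k(x,x)=k_c(x,x)+i$ and $k(x,-x)=k_c(x,-x)$, recombines exactly to $k(x,x)+k(x,-x)$; this gives the remaining $H_{12}$ formula. Similarly $\int_{-x}^x \overline{\psi}(s)\,i\chi_{[0,x]}(s)\,ds=i\int_0^x\overline{\psi}(s)\,ds$ differentiates to $i\overline{\psi}(x)$, which absorbs into $\overline{\psi}(x)k_c(x,x)$ to yield $\overline{\psi}(x)k(x,x)$, completing the formula for $H_{22}$. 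This jump-bookkeeping is the only nonroutine ingredient; everything else is assembling the normalization identities with Corollary~\ref{c4.3}.
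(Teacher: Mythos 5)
Your proposal is correct and follows essentially the same route as the paper: identify $\int_0^x H_{ij}(t)\,dt$ with the reproducing-kernel and conjugate-kernel expressions $J_0(x,0)$, $K_0(x,0)$, $\widehat{J_0(x,\cdot)}(0)$, $\widehat{K_0(x,\cdot)}(0)$ via the Lemma~\ref{l4.9} normalization, then differentiate via Corollary~\ref{c4.3}. Your explicit jump-bookkeeping for $k = k_c + i\chi_{[0,x]}$ (showing the extra $i$ and $i\overline{\psi}(x)$ recombine into $k(x,x)$ and $\overline{\psi}(x)k(x,x)$) is exactly what the paper leaves implicit when it writes $(k_c)_x$ inside the integrals but $k$ in the boundary terms, so your write-up is a useful expansion of the same argument.
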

\begin{proof}
Recall that \[J_0(x,0)=\int_{-x}^xj(x,t)dt=\int_0^xu^*(t,0)H(t)u(t,0)dt=\int_0^xH_{11}(t)dt\]
\[K_0(x,0)=\int_{-x}^xk(x,t)dt=\int_0^xv^*(t,0)H(t)u(t,0)dt=\int_0^xH_{12}(t)dt\]
And those two give the first two identities once we differentiate on both sides. 
\par
On the other hand, we have \[\widehat{J_0(x,z)}(0)=\widetilde{J_0(x,z)}(0)=\int_{-x}^x\overline{\psi}(t)j(x,t)dt=[K_0,J_0]_{B_x}=\int_0^xH_{12}(t)dt\]
This gives the third identity.
\par
The last one is from 
\[\widetilde{K_0(x,z)}(0)=\widehat{K_0(x,z)}(0)=[K_0,K_0]_{B_x}=\int_0^xH_{22}(t)dt\]
\end{proof}
\subsection{Continuity of the Canonical System}
The following theorem shows that for $\phi\in\Phi_N$, the canonical system we have achieved is continuous.
\begin{theorem}
\label{t4.3}
 Under the assumptions of Lemma \ref{l4.9}, and write \[A(x):=2j(x,x)j(x,-x),C(x):=2k(x,x)k(x,-x)\]\[B(x):=j(x,-x)k(x,x)+j(x,x)k(x,-x),D(x):=j(x,-x)k(x,x)-j(x,x)k(x,-x)\]
 then
 \begin{enumerate}
     \item $D\neq 0$ on $(0,N]$ and $\lim\limits_{x\to 0^+}D(x)=\frac{1}{2}i$;
     \item $H$ has a continuous representative so that
\[\left\{\begin{array}{rcl}
H_{11}=\frac{A}{D}i\\
H_{12}=\frac{B}{D}i\\
H_{22}=\frac{C}{D}i\\
\end{array}\right.\]
on $(0,N]$ everywhere, and $H(0)=I, \det(H)=1$;
\item As a consequence of 2., the corresponding Dirac system $\mu\in DS$ is a continuous measure. 
 \end{enumerate}
\end{theorem}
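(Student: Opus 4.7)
The plan proceeds in four threads: the conjugation symmetries of $j$ and $k$ (set-up), the limit $D(0^+) = i/2$, the closed-form formulas for $H$ (the heart of the argument), and the remaining consequences $\det H = 1$, continuity, and continuity of $\mu$.

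First, I would derive the pointwise identity $j(x,-t) = \overline{j(x,t)}$ and the $L^2$ identity $k(x,-t) = \overline{k(x,t)}$. Both follow from conjugating the integral equations $(1+T^x_\phi)j = 1/2$ and $(1+T^x_\phi)k = \psi/2$, substituting $t \mapsto -t$, and applying uniqueness via Theorem \ref{t4.1}. The required input $\overline{\psi(-t)} = \psi(t)$ is verified from $\overline{\Phi(-t)} = -\Phi(t)$, which in turn follows from $\overline{\phi(s)} = \phi(-s)$. Tracking the jump $k = k_c + i\chi_{[0,x]}$ at $t = 0$, we obtain $j(x,-x) = \overline{j(x,x)}$ and $k(x,-x) = \overline{k(x,x)}$ in the sense $k_c(x,-x) = \overline{k_c(x,x)} - i$. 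Consequently $A, C \in \mathbb{R}$ and $D \in i\mathbb{R}$. For $D(0^+) = i/2$, evaluate the integral equations at $x = 0$ (the convolution term vanishes) to get $j(0,0) = 1/2$ and $k_c(0,0) = -i/2$; then $j(x,\pm x) \to 1/2$, $k(x,x) \to i/2$, $k(x,-x) \to -i/2$, and $D(x) \to (1/2)(i/2) - (1/2)(-i/2) = i/2$.

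The central step is the derivation of the closed-form expressions for $H$. The key is to express the canonical fundamental solution $u(x,z)$ directly in terms of the kernels: the de Branges reproducing identity $J_0(x,z) = (E_x(z) - E_x^{\#}(z))/(-2iz)$ combined with $E_x = u_1 - iu_2$ yields $u_2(x,z) = z J_0(x,z)$, while the conjugate-kernel identity $K_0(x,z) = (v^*(x,0)Ju(x,z)-1)/z$ with $v(x,0) = (0,1)^T$ yields $u_1(x,z) = 1 + zK_0(x,z)$, under the normalization secured in Lemma \ref{l4.9}. Combining $Hu = (\partial_x J_0,\partial_x K_0)^T$ with the Fourier boundary expansions
\begin{equation*}
\partial_x J_0(x,z) = j(x,x)e^{izx} + j(x,-x)e^{-izx} + \int_{-x}^x j_x(x,t)e^{izt}dt
\end{equation*}
and the parallel formula for $\partial_x K_0$ (where the moving endpoint of $i\chi_{[0,x]}$ shifts the coefficient of $e^{izx}$ from $k_c(x,x)$ to $k(x,x) = k_c(x,x)+i$), I would set $z = iy$ and let $y \to \pm\infty$. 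Integration by parts -- justified by the equations of Corollary \ref{c4.3} for $j_x$ and $(k_c)_x$ together with the continuity of $j, k_c$ on $\overline{\Delta_N}$ from Theorem \ref{t4.2} -- shows the interior integrals are of order $1/|y|$ smaller than the boundary exponentials, so leading-order matching yields
\begin{equation*}
H\begin{pmatrix} k(x,x)\\ j(x,x)\end{pmatrix} = i\begin{pmatrix} j(x,x)\\ k(x,x)\end{pmatrix},\qquad H\begin{pmatrix} k(x,-x)\\ j(x,-x)\end{pmatrix} = -i\begin{pmatrix} j(x,-x)\\ k(x,-x)\end{pmatrix}.
\end{equation*}
Cramer's rule then produces $H_{11} = iA/D$, $H_{12} = iB/D$, and $H_{22} = iC/D$, with $D \ne 0$ on $(0,N]$ forced by the unique solvability of this linear system for the finite real matrix $H(x)$.

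The remaining assertions follow quickly. A direct algebraic expansion gives the identity $AC - B^2 = -D^2$, hence $\det H = -(AC-B^2)/D^2 = 1$. The continuity of $j$ and $k_c$ on $\overline{\Delta_N}$ from Theorem \ref{t4.2} propagates to $A, B, C, D$, and since $D$ is continuous and nonvanishing on $[0,N]$, the formulas give a continuous representative of $H$ on all of $[0,N]$, with $H(0) = I$ read off from the boundary limits computed above. Finally, part 3 is immediate from Corollary \ref{c2.2}, which converts continuity of $H$ into continuity of the corresponding Dirac measure $\mu$. The main obstacle is the asymptotic analysis in the central step: both the boundary exponentials and the interior integrals carry growth of order $e^{|y|x}$ along imaginary rays, so only the finer IBP-based $1/|y|$ separation isolates the true leading behavior, and the jump contribution at $t=0$ in $\partial_x K_0$ must be tracked exactly to recover the asserted formulas (in particular, the sign of $H_{12}$).
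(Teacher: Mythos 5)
Your strategy differs genuinely from the paper's: instead of deriving the two linear identities $BH_{11}-AH_{12}=0$ and $BH_{12}-AH_{22}=iD$ from the integral equations for $j_x,(k_c)_x,j_t$ and then getting $\det H=1$ separately from the exponential-type formula, you try to read $H$ off directly from leading-order asymptotics of $u_1=1\pm zK_0,\ u_2=zJ_0$ along $z=iy,\ y\to\pm\infty$, and then check $\det H=1$ algebraically from $AC-B^2=-D^2$. That last observation is a real simplification, but the central asymptotic step as you state it is not correct. Your proposed relations
\[
H\begin{pmatrix}k(x,x)\\ j(x,x)\end{pmatrix}=i\begin{pmatrix}j(x,x)\\ k(x,x)\end{pmatrix},\qquad
H\begin{pmatrix}k(x,-x)\\ j(x,-x)\end{pmatrix}=-i\begin{pmatrix}j(x,-x)\\ k(x,-x)\end{pmatrix}
\]
are internally inconsistent: solving the $(1,1)$-$(1,2)$ rows by Cramer gives $H_{12}=-iB/D$, while the $(2,1)$-$(2,2)$ rows give $H_{21}=+iB/D$, contradicting $H=H^T$ (and contradicting the theorem's $H_{12}=iB/D$). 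The culprit is the claim $Hu=(\partial_xJ_0,\partial_xK_0)^T$. From $u'=zJHu$ one gets $Hu=-\tfrac1z Ju'=\tfrac1z(u_2',-u_1')^T$, so whichever sign you fix in $u_1=1\pm zK_0$, one of the two components in $Hu$ acquires a minus sign (and the paper's own two expressions for $K_w$ — the Wronskian formula and the integral representation — actually disagree by a sign, so the bookkeeping here is genuinely delicate). Tracking that sign is exactly where the value of the paper's algebraic route lies: equations (16) and (17) are derived purely from the $j_x,(k_c)_x,j_t$ integral equations, so no transfer-matrix sign convention enters.

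Two further gaps. First, the assertion that $D\neq 0$ on $(0,N]$ is ``forced by unique solvability of this linear system for the finite real matrix $H(x)$'' is circular: the asymptotic matching only produces the linear system for a.e.\ $x$ (where $\partial_xJ_0,\partial_xK_0$ exist), and at a point with $D(x_0)=0$ the system becomes degenerate rather than inconsistent, so nothing is ``forced.'' The paper's proof of $D\neq 0$ is a genuine contradiction argument using bijectivity of $1+L_\phi^x$ applied to $k^-j_t-j^-(k_c)_t$, and something like it is needed. Second, the ``integration by parts'' justification of the $1/|y|$ separation is not available: $j_x(x,\cdot),(k_c)_x(x,\cdot)$ are only known to be in $L^1[-x,x]$ from Corollary~\ref{c4.3}, so you cannot integrate by parts in $t$. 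What you actually need is weaker — that $\int_{-x}^x f(t)e^{-yt}\,dt=o(e^{|y|x})$ for $f\in L^1$, which is immediate from dominated convergence — together with a Watson-type estimate $J_0(x,iy)\sim j(x,\mp x)e^{|y|x}/|y|$ using continuity of $j$ at the endpoints; neither is what you wrote. The boundary-value computation $D(0^+)=i/2$, the conjugation symmetries, and the reduction of part~3 to Corollary~\ref{c2.2} are all fine and match the paper.
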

\begin{proof}
Let's list all the significant equations we have gotten:
\par
For $g(x,t)=\frac{1}{2}$, we have 
\[j_t(x,t)+\int_{-x}^x\phi(t-s)j_t(x,s)ds=\phi(t-x)j(x,x)-\phi(t+x)j(x,-x)\]
\[j_x(x,t)+\int_{-x}^x\phi(t-s)j_x(x,s)ds=-\phi(t-x)j(x,x)-\phi(t+x)j(x,-x)\]
For $g(x,t)=i\Phi(t-x)-\frac{1}{2}i$, we have 
\[(k_c)_t(x,t)+\int_{-x}^x\phi(t-s)(k_c)_t(x,s)ds=\phi(t-x)k(x,x)-\phi(t+x)k(x,-x)\]
\[(k_c)_x(x,t)+\int_{-x}^x\phi(t-s)(k_c)_x(x,s)ds=-\phi(t-x)k(x,x)-\phi(t+x)k(x,-x)\]
If we multiply the second and the forth equations by $k(x,-x)$ and $j(x,-x)$ respectively, and evaluate the difference, we get
\[(k(x,-x)j_x-j(x,-x)(k_c)_x)+\int_{-x}^x\phi(t-s)(k(x,-x)j_x-j(x,-x)(k_c)_x)\]
\[=\phi(t-x)(j(x,-x)k(x,x)-j(x,x)k(x,-x))\]
On the other hand, we also have
\[(j_t-j_x)+\int_{-x}^x\phi(t-s)(j_t-j_x)=2\phi(t-x)j(x,x)\]
If we compare those two equations and recall that $1+L_\phi^x$ is a bijection, we conclude that the solutions are the same up to some obvious coefficients, more precisely:
\[B(x)j_x(x,t)-A(x)(k_c)_x(x,t)=D(x)j_t(x,t)\]
If we integrate from $-x$ to $x$ on both sides and invoke Corollary \ref{c4.4}, then 
\[B(H_{11}-j(x,x)-j(x,-x))-A(H_{12}-k(x,x)-k(x,-x))=D(j(x,x)-j(x,-x))\]
namely, 
\begin{equation}
BH_{11}-AH_{12}=0    
\end{equation}
On the other hand, we integrate from $-x$ to $x$ on both sides of the following differential equation
\[B(x)\overline{\psi}(t)j_x(x,t)-A(x)\overline{\psi}(t)(k_c)_x(x,t)=D(x)\overline{\psi}(t)j_t(x,t)\]
Recall $\overline{\phi}(s)=\phi(-x)$ and equation (12), we have
\[j(x,0)+\int_{-x}^x\overline{\phi}(s)j(x,s)ds=\frac{1}{2}\]
therefore 
\[\int_{-x}^x\overline{\psi}(s)j_t(x,s)ds=i+j(x,x)\psi(-x)-j(x,-x)\psi(x)\]
It follows from this calculation that 
\begin{equation}
 BH_{12}-AH_{22}=iD   
\end{equation}
Last, we claim that
\begin{equation}
 \det H=H_{11}H_{22}-H_{12}^2=1   
\end{equation}
\par
Indeed, we know this famous fact that the subspace of $PW_x$, $B_{\tau(x)}:=\{F\in B_x=B(E_x)=PW_x:\tau (F)\leq \tau_x (E)\}$, is also a de Branges space and $B_{\tau(x)}=B(E_a)=B_a=PW_a$, where $a=\max\{t\in \mathbb{R}:\tau_t(E)\leq \tau_x (E)\}$, then $a\leq x$ since $PW_a$ is a subspace of $PW_x$. On the other hand, recall that the exponential type formula is given by 
\[\tau_x (E)=\int_0^x\sqrt{\det H(s)}ds\]
This implies $\sqrt{\det H(s)}\neq0$ a.e. on the interval $[x,x_0]$, otherwise $a=x_0>x$. Since the choice of $x$ is arbitrary, we conclude that $\sqrt{\det H(s)}\neq0$ a.e. on $[0,N]$; as a consequence, $a=x$, namely $B_{\tau(x)}=B_x=PW_x$. The Paley-Wiener theorem states that the exponential type of a function from $PW_x$ is at most $x$, which implies that $\tau_x (E)\geq x$. Assume $\tau_x (E)> x$, then there must be a number $x_1<x$ such that $\tau_{x_1} (E)=\int_0^{x_1}\sqrt{\det H(s)}ds =x$, then the space $\{F\in B_x=B(E_x)=PW_x:\tau (F)\leq \tau_{x_1} (E)=x\}=PW_x=PW_a$, where $a=\max\{t\in \mathbb{R}:\tau_t(E)\leq \tau_{x_1} (E)=x\}=x_1$, which is a contradiction. It follows from $\tau_x (E)=\int_0^x\sqrt{\det H(s)}ds= x$ that $\det H=1$.
\par
Equations (16)-(18) give us a system (a.e.). To solve it, we first observe that $j(x,-t)=\overline{j}(x,t)$ and $k(x,-t)=\overline{k}(x,t)$ due to the uniqueness of the solution of those corresponding integral equations.  If $A(x_0)=0$ for some $x_0\in (0,N]$, i.e., $j(x_0,-x_0)=\overline{j}(x_0,x_0)=0$, then it follows from the first equation given at the beginning of the proof that 
$j_t(x_0,t)=0$ (the uniqueness again); however, this implies that $j(x_0,t)=0$, which cannot be the solution of the original integral equation, hence we conclude that $A\neq 0$ on $[0,N]$ (notice that $j(0,0)=\frac{1}{2}$).
\par
Now, we claim that $D\neq 0$ on $(0,N]$. Indeed, if this is not true, then we have $j(x_0,-x_0)k(x_0,x_0)=j(x_0,x_0)k(x_0,-x_0)$ for some $x_0\in (0,N]$. The same calculation as above implies that 
\[k(x_0,-x_0)j_t(x_0,t)-j(x_0,-x_0)(k_c)_t(x_0,t)=0\]
If we integrate on both sides with respect to $t$ from $-x$ to $x$ and recall $k_c(x_0,-x_0)=k(x_0,-x_0)$, $k_c(x_0,x_0)=k(x_0,x_0)-i$, then we conclude that $j(x_0,-x_0)=0$, which is in contradiction to $A\neq 0$. The limit of $D$ is from the continuity of $j$ and $k_c$.
\par
It is easy to solve the system now, Since $A,B,C$ are continuous except $x=0$, it follows that $H$ can be chosen so that it (the representative) is continuous on $(0,N]$ and $H(0)=I$ as the right limit; moreover, corollary \ref{c2.2} gives the existence of a continuous Dirac operator.
\end{proof}
A significant application of Theorem \ref{t4.3} is the case $\phi \in L^2$.
\begin{theorem}
\label{t4.4}
Assume $\phi \in \Phi_N\cap L^2(-2N,2N)$, then the corresponding canonical system is a Dirac system with an absolutely continuous measure, i.e., a classical Dirac system with an $L^1(0,N)$ function as its coefficient.     
\end{theorem}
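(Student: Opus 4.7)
The plan is to show that the canonical system $H$ constructed in Theorem \ref{t4.3} lies in $AC[0,N]$; by Corollary \ref{cor2.1}, this is equivalent to the associated $\mu \in DS$ being absolutely continuous with respect to Lebesgue measure, i.e., to the Dirac system being classical with an $L^1(0,N)$ coefficient.

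From Theorem \ref{t4.3}, $H_{11} = iA/D$, $H_{12} = iB/D$, $H_{22} = iC/D$, where $A,B,C,D$ are polynomial combinations of the four boundary values $j(x,\pm x), k(x,\pm x)$. Since $D$ is continuous and nonzero on $(0,N]$ with $\lim_{x\to 0^+} D(x) = i/2$, and $H(0)=I$, it suffices to show these four boundary functions are absolutely continuous in $x$.

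The key leverage of the hypothesis $\phi \in L^2(-2N,2N)$ is that the $x$-derivatives $j_x, (k_c)_x$ acquire $L^2$ regularity rather than the $L^1$-per-slice supplied by Corollary \ref{c4.3}. Indeed, Corollary \ref{c4.3} writes $p_x(x,\cdot) = (1+T^x_\phi)^{-1}\bigl(-\phi(\cdot - x)\,p(x,x) - \phi(\cdot + x)\,p(x,-x)\bigr)$ for $p = j, k_c$; when $\phi \in L^2$ and $p(x,\pm x)$ are uniformly bounded on $[0,N]$ (by continuity, Theorem \ref{t4.3}), the right-hand side lies in $L^2(-x,x)$ with norm uniformly controlled in $x$, while Theorem \ref{t4.1} supplies a uniform bound on $\|(1+T^x_\phi)^{-1}\|_{L^2 \to L^2}$. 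Hence $j_x, (k_c)_x \in L^2(\Delta_N)$, and an analogous argument via the $t$-equation in Theorem \ref{t4.2} gives $j_t, (k_c)_t \in L^2(\Delta_N)$.

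To deduce absolute continuity of $j(x,x)$, evaluate the integral equation at $t = x$ to obtain
\[
j(x,x) = \tfrac{1}{2} - \int_0^{2x} \phi(u)\, j(x,x-u)\, du,
\]
and differentiate formally in $x$:
\[
\tfrac{d}{dx} j(x,x) = -2\phi(2x)\, j(x,-x) - \int_0^{2x} \phi(u)\bigl[j_x(x,x-u) + j_t(x,x-u)\bigr]\, du.
\]
Under $\phi \in L^2$, $\phi(2\,\cdot) \in L^2(0,N)$ and $j(x,-x)$ is bounded, so the first term lies in $L^1(0,N)$; the convolution term is controlled in $L^1(0,N)$ by Cauchy-Schwarz applied to the $L^2$ bounds on $j_x, j_t$ from the previous step. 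Thus $\tfrac{d}{dx} j(x,x) \in L^1(0,N)$ and $j(x,x) \in AC[0,N]$; the same argument handles $j(x,-x)$ and $k(x,\pm x)$. The main obstacle is rigorously justifying the differentiation under the integral sign when $\phi$ is only $L^2$: I would carry this out by approximating $\phi$ by $\phi_n \in C_c^\infty \cap \Phi_N$ (Corollary \ref{c4.1}), applying classical Leibniz to each $\phi_n$, and passing to the limit in $L^1(0,N)$ using the uniform $L^2$ control on $j_x, (k_c)_x, j_t, (k_c)_t$ obtained from the approximation. Once the four boundary functions are absolutely continuous, the product rule makes $A,B,C,D \in AC[0,N]$, hence $H \in AC[0,N]$, and Corollary \ref{cor2.1} completes the proof.
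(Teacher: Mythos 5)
Your proposal is correct and takes essentially the same route as the paper. The paper also reduces via Corollary~\ref{cor2.1} and Theorem~\ref{t4.3} to absolute continuity of $j(x,x)$ and $k(x,x)$, and its central object is $w := p_x + p_t$, which (because $g_x + g_t = 0$ for both $g = 1/2$ and $g = i\Phi(t-x) - i/2$) solves the cleanly-simplified equation $w + T_\phi^x w = -2\phi(t+x)p(x,-x)$; plugging $t=x$ into this is the same identity you obtain by differentiating the integral equation along the diagonal, and the bound $\lVert w(x,\cdot)\rVert_{L^2} \leq C\lVert (1+T_\phi^x)^{-1}\rVert\,\lVert\phi\rVert_{L^2}$ (uniform in $x$, by Theorem~\ref{t4.1}) is exactly your Cauchy--Schwarz estimate. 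The one point the paper dwells on that you pass over quickly is that $w(x,x)$ is not a priori a measurable function of $x$ (Corollary~\ref{c4.3} only furnishes $p_x(x,\cdot)$ slice-by-slice), so the approximation by $\phi_n \in C_c^\infty\cap\Phi_N$ is needed not merely to justify Leibniz but to establish measurability of $x \mapsto w(x,x)$ as a pointwise limit of continuous functions; your planned approximation step does the right work, but you should make that measurability purpose explicit rather than treating it as a technicality of differentiation under the integral sign.
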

\begin{proof}
Thanks to  Corollary \ref{cor2.1} and Theorem \ref{t4.3}, it is enough to show  $j(x,x)$ and $k(x,x)$ are absolutely continuous.
\par
Theorem \ref{t4.2} and Corollary \ref{c4.3} combined says that 
\[(p_x(x,t)+p_t(x,t))+\int_{-x}^x\phi(t-s)(p_x(x,s)+p_t(x,s))=-2\phi(t+x)p(x,-x)\]
Let's denote $w(x,t):=p_x(x,t)+p_t(x,t)$. We want to investigate $w(x,x)$, which should be the derivative of $p(x,x)$; however, there is one issue we need to take into consideration first: we don't know if $w(x,x)$ is measurable, and it is possible that $w(x,x)$ is not defined almost everywhere.
\par
Let's consider $\{\phi_n\}\subset C_c^\infty(-2N,2N)\cap \Phi_N$ such that $\phi_n\to \phi$ in $L^2(-2N,2N)$ (the same as Corollary \ref{c4.1}) and equations
\[w_n(x,t)+\int_{-x}^x\phi_n(t-s)w_n(x,s)ds=-2\phi_n(t+x)p_n(x,-x)\]
where $p_n(x,t)$ denotes $j_n(x,t)$ or $(k_c)_n(x,t)$ and $w_n(x,s)=(p_n)_t(x,t)+(p_n)_x(x,t)$.
\par
It's clear that $w_n(x,t) \in C(\overline{\Delta_N})$, hence it follows that the integral \[I_n(x):=\int_{-x}^x\phi_n(x-s)w_n(x,s)ds=-w_n(x,x)-2\phi_n(2x)p_n(x,-x)\]
is well-defined everywhere and continuous on $[0,N]$. Moreover, by applying the routine(see Section 4.5), we have for any $x$ that $w_n\to w$ in $L^2$.
\par
For any fixed $x\in[0,N]$, Holder' inequality shows that $I(x):=\int_{-x}^x\phi(x-s)w(x,s)ds$ is also well-defined, and  $\lim\limits_{n\to \infty}I_n(x)=I(x)$. As a consequence, $I(x)$ is measurable as the pointwise limit of measurable functions, hence we conclude that $w(x,x)=-I(x)-2\phi(2x)p(x,-x)$ is measurable.
\par
Next, let's evaluate $\int_0^xI(t)dt$. Since we have
\[\int_0^N|I(t)|dt\leq\int_0^N\int_{-t}^t|\phi(t-s)|\cdot|w(t,s)|dsdt\]
By Holder's inequality, the following estimate holds
\[\int_0^N|I(t)|dt\leq\lVert\phi\rVert_{L^2}\int_0^N\lVert w(t,\cdot)\rVert_{L^2(-t,t)}dt\]
The solution $w(x,t)$ can be written as 
\[w(x,t)=-2p(x,-x)(1+T_{\phi}^x)^{-1}(\phi(t+x))\]
hence
\[\lVert w(x,\cdot)\rVert_{L^2(-x,x)}\leq Const.\lVert(1+T_{\phi}^x)^{-1}\rVert\cdot\lVert \phi\rVert_{L^2}\]
We have already proved that the right-hand side is uniformly bounded (in the proof of Theorem \ref{t4.1}), hence it follows that $I(x)\in L^1[0,N]$, and therefore, $w(x,x)\in L^1[0,N]$.
\par
We have 
\[\int_0^kw_n(x,x)dx=-\int_0^k\int_{-x}^x\phi_n(x-s)w_n(x,s)dsdx-2\int_0^k\phi_n(2x)p_n(x,-x)dx\]
Obviously, the Fubini theorem works here, so we have
\[\int_0^kw_n(x,x)dx=-\int_{-k}^k\phi_n(k-s)p_n(k,s)ds\]
the right-hand side equals $p_n(k,k)-g_n(k,k)$, and $g_n(k,k)=g(k,k)$ is just a constant, i.e., $p_n(k,k)=g(k,k)+\int_0^kw_n(x,x)dx$ is absolutely continuous. If we apply the dominated convergence theorem to $\int I(s)ds$, we conclude that $\lim\limits_{n\to\infty}\int_0^kw_n(x,x)dx=\int_0^kw(x,x)dx$; moreover, Since $p_n(x,t)\to p(x,t)$ in the sense of supreme norm, if we take limits on both sides, we finally get
\[p(x,x)=g(x,x)+\int_0^kw(x,x)dx\]
that is, $p(x,x)$ is absolutely continuous.
\end{proof}
\subsection{The Proofs of Theorem \ref{t4.2} and Corollary \ref{c4.3}}
\subsubsection{The Proof of Theorem \ref{t4.2}}
It is clear that $p(x,t)\in C[-x,x]$ for a fixed $x\in (0,N]$ since $1+C_{\phi}^x$ is a bijection. We want to show the following lemma:
\begin{lemma}
 $p(x,t)$ is continuous at $(0,0)$.  
\end{lemma}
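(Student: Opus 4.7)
The plan is to derive the continuity at $(0,0)$ from two ingredients: a uniform bound on $\|p(x,\cdot)\|_{C[-x,x]}$ for small $x$, and the fact that the integral $\int_{-x}^{x}|\phi(t-s)|\,ds$ vanishes as $(x,t)\to(0,0)$. Both will follow from the trivial operator-norm estimate $\|C_\phi^x\|_{B(C[-x,x])}\leq\|\phi\|_{L^1(-2x,2x)}$ combined with absolute continuity of the Lebesgue integral, given $\phi\in L^1(-2N,2N)$.

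First I would estimate, for $f\in C[-x,x]$ and $t\in[-x,x]$,
\[
|C_\phi^x f(t)|\leq \|f\|_{C[-x,x]}\int_{-x}^{x}|\phi(t-s)|\,ds\leq \|\phi\|_{L^1(-2x,2x)}\,\|f\|_{C[-x,x]},
\]
so $\|C_\phi^x\|_{B(C[-x,x])}\leq \|\phi\|_{L^1(-2x,2x)}\to 0$ as $x\to 0^+$ by absolute continuity of the Lebesgue integral. Pick $\epsilon\in(0,N]$ with $\|C_\phi^x\|\leq 1/2$ for all $x\in(0,\epsilon]$. The Neumann series then gives $\|(1+C_\phi^x)^{-1}\|\leq 2$ uniformly on $(0,\epsilon]$, whence, writing $M:=\sup_{\overline{\Delta_N}}|g|<\infty$, we obtain the uniform bound
\[
\|p(x,\cdot)\|_{C[-x,x]}=\|(1+C_\phi^x)^{-1}g(x,\cdot)\|_{C[-x,x]}\leq 2M\qquad\text{for all }x\in(0,\epsilon].
\]

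Now decompose, using $p(0,0)=g(0,0)$,
\[
p(x,t)-p(0,0)=\bigl(g(x,t)-g(0,0)\bigr)-\int_{-x}^{x}\phi(t-s)p(x,s)\,ds.
\]
As $(x,t)\to(0,0)$ with $|t|\leq x$, the first term tends to $0$ by continuity of $g$ on $\overline{\Delta_N}$, while the second term is bounded by
\[
\|p(x,\cdot)\|_{C[-x,x]}\int_{-x}^{x}|\phi(t-s)|\,ds\leq 2M\,\|\phi\|_{L^1(-2x,2x)}\xrightarrow[x\to 0^+]{}0.
\]
Thus $p(x,t)\to p(0,0)$ and the lemma follows. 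No step is a real obstacle: the mildly delicate point is the invocation of absolute continuity of the integral to kill $\|\phi\|_{L^1(-2x,2x)}$, and everything else is routine Neumann-series/supremum-norm bookkeeping on the compact triangle $\overline{\Delta_\epsilon}$.
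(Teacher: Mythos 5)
Your proof is correct and uses the same essential tools as the paper's: absolute continuity of the Lebesgue integral kills $\lVert C_\phi^x\rVert$ as $x\to0^+$, the Neumann series gives a uniform bound on $\lVert(1+C_\phi^x)^{-1}\rVert$ near $0$, and continuity of $g$ finishes the job. The only difference is cosmetic: you first extract a uniform bound $\lVert p(x,\cdot)\rVert_{C[-x,x]}\le 2M$ and then bound the raw term $-C_\phi^x p(x,\cdot)$ in the identity $p(x,t)-p(0,0)=(g(x,t)-g(0,0))-C_\phi^x p(x,\cdot)(t)$, whereas the paper applies $(1+C_\phi^x)^{-1}$ directly to $g(x,t)-g(0,0)-C_\phi^x\,g(0,0)$ and estimates that in one pass. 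Both rearrangements are equally valid; yours is perhaps a shade more transparent, the paper's a shade more compact, but there is no genuine difference in method.
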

\begin{proof}
 We pick up $0<\epsilon<1$.  Recall that $\lVert C_\phi^xf\rVert\leq \lVert f\rVert_{C[-x,x]}\sup\limits_{t\in [-x,x]}\int_{t-x}^{t+x}|\phi(s)|ds$, if we pick up a small $N_1$ such that if  $b-a\leq 2N_1$, then $\int_a^b|\phi(s)|ds<\epsilon$ (we can always do this since the measure $\int_A|\phi(s)|ds$ is absolutely continuous), then we have $\lVert C_\phi^x\rVert<\epsilon$ for all $x\in (0,N_1]$. It is clear that $(1+C_\phi^x)^{-1}=\sum\limits_{n=0}^\infty(-C_\phi^x)^n$ and $\lVert(1+C_\phi^x)^{-1}\rVert\leq \frac{1}{1-\epsilon}$ for all $x\in (0,N_1]$. Consider a  small triangle $\overline{\Delta_{N_1}}$, then it follows from $p(0,0)=g(0,0)$ that 
 \[p(x,t)-p(0,0)=(1+C_\phi^x)^{-1}(g(x,t)-g(0,0)-C_\phi^xg(0,0))\]
 Since $g(x,t) \in C(\overline{\Delta_N})$, then $N_1$ can be chosen small enough so that $|g(x,t)-g(0,0)|<\epsilon$, hence the equality above gives
\[|p(x,t)-p(0,0)|\leq 
\frac{\epsilon(1+|g(0,0)|)}{1-\epsilon}\]
This implies that $p(x,t)$ is continuous at $(0,0)$.
\end{proof}
The rest is the continuity on $\Theta_{N_1}:=\overline{\Delta_N}\setminus \overline{\Delta_{N_1}}\cup \{(N_1,t):|t|\leq N_1\}$ for any $N_1>0$. To do this, we introduce a method to transfer equation (14) into a nicer one, see Chapter 6 in \cite{3} for instance.
\par
Let's fix a $N_1$, and denote by $h$ a continuous function as follows:
\[h:\Theta_{N_1} \to [N_1,N]\times[-1,1], h(x,t)=(x,\frac{t}{x})\]
Notice that the construction of the function makes sense because we've already avoided $x=0$; moreover, on $\Theta_{N_1}$, $p$ solves the equation (14)
\[p(x,t)+\int_{-x}^x\phi (t-s)p(x,s)ds=g(x,t)\]
if and only if on $[N_1,N]\times[-1,1]$, $q:=p\circ h^{-1}$ solves
\begin{equation}
 q(x,t)+\int_{-1}^1x\phi (xt-xs)q(x,s)ds=g\circ h^{-1}(x,t)   
\end{equation}
We denote the integral operator derived from  (19) from $C[-1,1]$ to $C[-1,1]$ for a fixed $x\in [N_1,N]$ by $1+K_\phi^x$, and with the help of $C_\phi^x$, we know that $1+K_\phi^x$ is boundedly invertible.
\begin{lemma}
\label{l4.11}
The map $\kappa_{\phi}(x):=K_{\phi}^x:(0,N]\to B(C[-1,1])$ is continuous.
\end{lemma}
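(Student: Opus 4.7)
The plan is to prove continuity by a standard density argument: approximate $\phi$ in $L^1(-2N,2N)$ by smooth compactly supported functions, verify continuity for those easier inputs, and combine via the triangle inequality once a uniform-in-$x$ operator bound is in hand.

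First I would record a uniform bound of the form $\lVert K_\psi^x \rVert_{B(C[-1,1])} \leq \lVert \psi \rVert_{L^1(-2N,2N)}$ for every $\psi \in L^1(-2N,2N)$ and every $x \in (0,N]$. This drops out of the substitution $u = x(t-s)$ in the integral definition of $K_\psi^x$: the integration range becomes a subinterval of $(-2N,2N)$ since $x \leq N$ and $t,s \in [-1,1]$. Applied to $\psi = \phi - \phi_n$, this estimate will control the approximation error in operator norm uniformly in $x$, which is exactly what makes the density argument go through.

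Next I would treat smooth $\phi_n \in C_c^\infty(-2N,2N)$. Splitting
\[x\phi_n(x(t-s)) - x_0 \phi_n(x_0(t-s)) = (x-x_0)\phi_n(x(t-s)) + x_0\bigl[\phi_n(x(t-s)) - \phi_n(x_0(t-s))\bigr],\]
the first summand is bounded by $|x-x_0|\lVert\phi_n\rVert_\infty$, and the second by the modulus of continuity of $\phi_n$ evaluated at $2|x-x_0|$, since $|x(t-s) - x_0(t-s)| \leq 2|x - x_0|$. Integrating in $s$ over $[-1,1]$ and taking a supremum in $t$ yields $\lVert K_{\phi_n}^x - K_{\phi_n}^{x_0} \rVert_{B(C[-1,1])} \to 0$ as $x \to x_0$.

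Finally, given $\epsilon > 0$, Corollary \ref{c4.1} produces $\phi_n \in C_c^\infty(-2N,2N) \cap \Phi_N$ with $\lVert \phi - \phi_n \rVert_{L^1} < \epsilon/3$. In the decomposition
\[K_\phi^x - K_\phi^{x_0} = (K_\phi^x - K_{\phi_n}^x) + (K_{\phi_n}^x - K_{\phi_n}^{x_0}) + (K_{\phi_n}^{x_0} - K_\phi^{x_0}),\]
the first and third summands are bounded by $\epsilon/3$ via the uniform bound, and by the smooth case the middle term falls below $\epsilon/3$ for $x$ close enough to $x_0$. I do not foresee a serious obstacle; the only point demanding care is that the operator-norm estimate from the first step be genuinely uniform in $x \in (0,N]$, which is precisely what allows the smooth-approximation step to close without losing control as $x \to 0$.
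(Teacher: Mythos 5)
Your proof is correct and follows essentially the same route as the paper: write $\|K_\phi^x - K_\phi^{x_0}\|$ as a supremum of an $L^1$-type integral, approximate $\phi$ in $L^1$ by smooth compactly supported functions, use the change of variables $u = x(t-s)$ to get the uniform-in-$x$ estimate $\|K_\psi^x\| \le \|\psi\|_{L^1}$, and handle the smooth piece via its modulus of continuity. The only cosmetic differences are the exact algebraic split of $x\phi_n(x(t-s)) - x_0\phi_n(x_0(t-s))$ (the paper attaches $x$ to the difference of $\phi_n$-values and $(x-x_0)$ to $\phi_n(x_0(t-s))$, while you do it the other way round) and the paper's choice of $\varepsilon$-thresholds pre-tuned for use in the subsequent lemma.
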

\begin{proof}
 Assume $0<\epsilon<1$ and fix $x_0\in (0,N]$, we pick $x$ such that $|x-x_0|<\delta$ for some proper $\delta>0$, and we want to figure out how proper this $\delta$ should be. 
 \par
 For $f\in C[-1,1]$, we have
 \[\lVert(K_\phi^x- K_\phi^{x_0})f\rVert\leq \rVert f\rVert\sup\limits_{t\in [-1,1]}\int_{-1}^1|x\phi (xt-xs)-x_0\phi (x_0t-x_0s)|ds\]
 As before, we can pick $\{\phi_n\}\subset C_c^\infty(-2N,2N)\cap \Phi_N$ such that $\phi_n\to \phi$ in $L^1(-2N,2N)$. Let's pick one $\phi_n$ such that $\lVert\phi-\phi_n\rVert<\frac{\epsilon}{8\lVert(1+K_\phi^{x_0})^{-1}\rVert}$, then it follows that 
 \begin{align*}
 &\int_{-1}^1|x\phi (xt-xs)-x_0\phi (x_0t-x_0s)|ds&\\
 &\leq \int_{-1}^1|x\phi (xt-xs)-x\phi_n (xt-xs)|ds+\int_{-1}^1|x_0\phi_n (x_0t-x_0s)-x_0\phi (x_0t-x_0s)|ds&\\
 &+\int_{-1}^1|x\phi_n (xt-xs)-x_0\phi_n (x_0t-x_0s)|ds&
 \end{align*}
The first and the second terms are not greater than $\lVert\phi-\phi_n\rVert$; moreover, we have 
\begin{align*}
 &\int_{-1}^1|x\phi_n (xt-xs)-x_0\phi_n (x_0t-x_0s)|ds&\\ 
 &\leq \int_{-1}^1x|\phi_n (xt-xs)-\phi_n (x_0t-x_0s)|ds+|x-x_0|\int_{-1}^1|\phi_n (x_0t-x_0s)|ds&
\end{align*}
Since $\phi_n$ is uniformly continuous, if $\delta$ is chosen properly small enough such that $|\phi_n (xt-xs)-\phi_n (x_0t-x_0s)|\leq \frac{\epsilon}{8N||(1+K_\phi^{x_0})^{-1}||}$, and notice the second term can be also small, then an easy calculation will give us a proper $\delta$ such that 
\[\lVert K_\phi^x- K_\phi^{x_0}\rVert<\frac{\epsilon}{\lVert(1+K_\phi^{x_0})^{-1}\rVert}\]
\end{proof}
\begin{lemma}
 $p(x,t)$ is continuous on $\Theta_{N_1}$ for any $N_1>0$.
\end{lemma}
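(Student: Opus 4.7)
The plan is to work in the transformed picture given by $h(x,t)=(x,t/x)$, where $q:=p\circ h^{-1}$ lives on the compact rectangle $[N_1,N]\times[-1,1]$ and solves $(1+K_\phi^x)q(x,\cdot)=G(x)$ with $G(x):=g\circ h^{-1}(x,\cdot)\in C[-1,1]$. Since $h^{-1}(x,s)=(x,xs)$ is continuous on $[N_1,N]\times[-1,1]$, joint continuity of $p$ on $\Theta_{N_1}$ will follow at once from joint continuity of $q$ on this rectangle.

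Two inputs are needed. The first is continuity of the data map $x\mapsto G(x)\in C[-1,1]$, which is a direct consequence of the uniform continuity of $g$ on the compact set $\overline{\Delta_N}$. The second is continuity of the inverse map $x\mapsto(1+K_\phi^x)^{-1}\in B(C[-1,1])$, which I would deduce from Lemma \ref{l4.11} together with the standard resolvent identity
\[
(1+K_\phi^x)^{-1}-(1+K_\phi^{x_0})^{-1}=(1+K_\phi^x)^{-1}(K_\phi^{x_0}-K_\phi^x)(1+K_\phi^{x_0})^{-1}.
\]
A short Neumann-series argument, using Lemma \ref{l4.11} and the invertibility of $1+K_\phi^{x_0}$ (established in the proof of Theorem \ref{t4.1} for the $C[-1,1]$ analogue), shows that $\|(1+K_\phi^x)^{-1}\|$ stays bounded on a neighborhood of $x_0$, so the right-hand side of the resolvent identity tends to $0$ in operator norm as $x\to x_0$.

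Combining these two inputs via the triangle-inequality bound
\[
\|q(x,\cdot)-q(x_0,\cdot)\|_{C[-1,1]}\le \|(1+K_\phi^x)^{-1}-(1+K_\phi^{x_0})^{-1}\|\,\|G(x_0)\|+\|(1+K_\phi^x)^{-1}\|\,\|G(x)-G(x_0)\|,
\]
we get $\|q(x,\cdot)-q(x_0,\cdot)\|_{C[-1,1]}\to 0$. Joint continuity of $q$ at $(x_0,t_0)$ then follows from
\[
|q(x,t)-q(x_0,t_0)|\le \|q(x,\cdot)-q(x_0,\cdot)\|_{C[-1,1]}+|q(x_0,t)-q(x_0,t_0)|,
\]
where the second summand vanishes because $q(x_0,\cdot)\in C[-1,1]$. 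Pulling back through $h$ yields continuity of $p$ on $\Theta_{N_1}$. The only nontrivial step is the operator-norm continuity of $x\mapsto(1+K_\phi^x)^{-1}$, and given Lemma \ref{l4.11} this is routine; the earlier reduction to the rescaled equation (19) is precisely what makes the ambient space $C[-1,1]$ independent of $x$, so the resolvent identity can be applied without any domain-matching issues.
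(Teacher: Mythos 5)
Your proposal is correct and is essentially the paper's own argument, just repackaged: the paper also reduces to the rescaled equation (19) on $[N_1,N]\times[-1,1]$, uses Lemma \ref{l4.11} to control $\lVert K_\phi^x-K_\phi^{x_0}\rVert$, bounds $\lVert q(x,\cdot)\rVert$ uniformly near $x_0$ via a Neumann-series estimate, and finishes with the same final triangle inequality for joint continuity. The only cosmetic difference is that you phrase the core estimate through the resolvent identity for $(1+K_\phi^x)^{-1}$, whereas the paper works directly from the rearranged equation $q(x_0,\cdot)-q(x,\cdot)=(1+K_\phi^{x_0})^{-1}\bigl((K_\phi^x-K_\phi^{x_0})q(x,\cdot)+G(x_0)-G(x)\bigr)$; these are algebraically equivalent.
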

\begin{proof}
As before, we assume $0<\epsilon<\frac{1}{2}$ and fix $x_0\in [N_1,N]$, and pick $x$ such that $|x-x_0|<\delta$ for some proper $\delta>0$ such that 
\[\lVert K_\phi^x- K_\phi^{x_0}\rVert<\frac{\epsilon}{\lVert(1+K_\phi^{x_0})^{-1}\rVert}\]
Consider the following equation about $K_\phi^x$:
\[(1+K_\phi^{x_0})^{-1}(g\circ h^{-1}(x,t))=q(x,t)+(1+K_\phi^{x_0})^{-1}(K_\phi^x- K_\phi^{x_0})(q(x,t))\] 
Since we have $\sup\limits_{\overline{\Delta_N}}|g(x,t)|\leq M$ for some $M$, then 
\[\lVert q(x,\cdot)\rVert_{C[-1,1]}\leq M\rVert(1+K_\phi^{x_0})^{-1}\rVert+\epsilon\lVert q(x,\cdot)\rVert_{C[-1,1]}\]
equivalently,
\[\sup\limits_{|x-x_0|<\delta}\lVert q(x,\cdot)\rVert_{C[-1,1]}\leq \frac{M\rVert(1+K_\phi^{x_0})^{-1}\rVert}{1-\epsilon}\]
We also have 
\[(1+K_\phi^{x_0})^{-1}(g\circ h^{-1}(x_0,t))=q(x_0,t)\]
If we modify $\delta$ a little bit, then we can even have 
\[\lVert g\circ h^{-1}(x,\cdot)-g\circ h^{-1}(x_0,\cdot)\rVert_{C[-1,1]}<\epsilon\]
Now, it is from 
\[q(x_0,t)-q(x,t)=(1+K_\phi^{x_0})^{-1}((K_\phi^{x}-K_\phi^{x_0})q(x,t)+g\circ h^{-1}(x_0,t)-g\circ h^{-1}(x,t))\]
that
\[\lVert q(x_0,\cdot)-q(x,\cdot)\rVert_{C[-1,1]}\leq (\lVert(1+K_\phi^{x_0})^{-1}\rVert+2M\lVert(1+K_\phi^{x_0})^{-1}\rVert)\epsilon\]
If we consider $x$ and $t$ simultaneously, we have 
\[|q(x,t)-q(x_0,t_0)|\leq |q(x,t)-q(x_0,t)|+|q(x_0,t)-q(x_0,t_0)|\]
This implies $q(x,t)\in C([N_1,N]\times[-1,1])$; moreover, as the composition of two continuous functions, $p(x,t)\in C(\Theta_{N_1})$.
\end{proof}
It is straightforward now to get the conclusion below:
\begin{corollary}
$p(x,t) \in C(\overline{\Delta_N})$.    
\end{corollary}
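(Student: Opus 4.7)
The plan is to combine the two preceding lemmas to cover every point of $\overline{\Delta_N}$. The first lemma gives continuity of $p$ at the corner $(0,0)$, and the second lemma gives continuity on each ``cropped'' region $\Theta_{N_1}$ for every $N_1>0$. So the only thing left to check is that these two results together exhaust $\overline{\Delta_N}$.

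Concretely, I would argue as follows. Pick any point $(x_0,t_0)\in\overline{\Delta_N}$. If $x_0=0$, then necessarily $t_0=0$, and continuity at $(x_0,t_0)$ is exactly the content of the first lemma. If $x_0>0$, choose any $N_1\in(0,x_0)$; then $(x_0,t_0)$ lies in the relative interior of $\Theta_{N_1}$ with respect to $\overline{\Delta_N}$ (since $|t_0|\le x_0$ and $x_0>N_1$), and the second lemma asserts $p\in C(\Theta_{N_1})$. Hence $p$ is continuous at $(x_0,t_0)$.

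There is one mildly subtle point: the topology on $\overline{\Delta_N}$ is the one inherited from $\mathbb{R}^2$, so a neighborhood of $(x_0,t_0)$ in $\overline{\Delta_N}$ is the intersection with an $\mathbb{R}^2$-ball. For $x_0>0$ one just has to pick $N_1<x_0$ small enough that a whole such neighborhood is contained in $\Theta_{N_1}$, which is immediate. At the corner $(0,0)$, the neighborhood basis consists of sets of the form $\{(x,t)\in\overline{\Delta_N}:x<\delta\}$, and the first lemma's bound $|p(x,t)-p(0,0)|\le\varepsilon(1+|g(0,0)|)/(1-\varepsilon)$ uniform on such a wedge does the job. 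So no real obstacle arises; the corollary is just a patching of the two local continuity statements, and the proof is essentially a two-line case distinction on whether $x_0=0$ or $x_0>0$.
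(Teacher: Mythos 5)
Your proposal is correct and matches the paper's intent: the paper introduces the corollary with ``It is straightforward now to get the conclusion below,'' and the intended argument is exactly the patching you describe, using the continuity-at-$(0,0)$ lemma for the corner and the $C(\Theta_{N_1})$ lemma for every point with $x_0>0$. Your remark about the relative topology (choosing $N_1<x_0$ so that a full relative neighborhood of $(x_0,t_0)$ lies in $\Theta_{N_1}$) is the only subtlety, and you handled it.
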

Now, it's time to prove the second part. Fix $x\in (0,N]$, and  pick $\{\phi_n\}\subset C_c^\infty(-2N,2N)\cap\Phi_N$ and $\phi_n\to \phi$ in $L^1(-2N,2N)$.We have solutions $p_n$ satisfying \[(1+C_{\phi_n}^x)p_n=g(x,t)\]
or equivalently,
\[p_n(x,t)=-\int_{-x}^x\phi_n(t-s)p_n(x,s)ds+g(x,t)\]
Since $\phi_n$ is differentiable and its derivative is bounded, then by applying the mean value theorem and dominated convergence theorem to $\phi_n$, we conclude that $\int_{-x}^x\phi_n(t-s)p_n(x,s)ds$ is differentiable as a function of $t$, and the derivative is $\int_{-x}^x\phi_n'(t-s)p_n(x,s)ds$. As $p_n(x,t) \in C(\overline{\Delta_N})$ is bounded, and $\phi_n'$ is also bounded, then we conclude that by mean value theorem again, $\int_{-x}^x\phi_n(t-s)p_n(x,s)ds$ is Lipchitz, hence absolutely continuous for $t$, thus it follows that $p_n$ is absolutely continuous. We denote those derivatives by $(p_n)_t$, and we have
\[(1+L_{\phi_n}^x)(p_n)_t=g_t(x,t)+\phi_n(t-x)p_n(x,x)-\phi_n(t+x)p_n(x,-x)\]
Recall that 
\[(1+C_{\phi_n}^x)^{-1}=(1-(1+C_{\phi}^x)^{-1}C_{\phi-\phi_n}^x)^{-1}(1+C_{\phi}^x)^{-1}\]
and for small $0<\epsilon<\frac{1}{2}$, we have for all large $n$
\[\lVert C_{\phi-\phi_n}^x\rVert\leq \lVert\phi-\phi_n\rVert_{L^1}\leq \frac{\epsilon}{\lVert(1+C_{\phi}^x)^{-1}\rVert}\]
Therefore, it follows that 
\[\lVert(1+C_{\phi_n}^x)^{-1}-(1+C_{\phi}^x)^{-1}\rVert\leq 
\frac{\lVert(1+C_{\phi}^x)^{-1}\rVert}{1-\epsilon}\epsilon<2\lVert(1+C_{\phi}^x)^{-1}\rVert\epsilon\]
For $1+L_{\phi_n}^x$, we have an analogous conclusion with the help of Young's inequality.
\par
From $p_n=(1+C_{\phi_n}^x)^{-1}g$ we conclude that $\{p_n\}$ is a Cauchy sequence in $C[-x,x]$ that converges to $p(x,t)$, hence $\{p_n\}$ is uniformly bounded. As a consequence, $g_t(x,t)+\phi_n(t-x)p_n(x,x)-\phi_n(t+x)p_n(x,-x)$ is convergent to $g_t(x,t)+\phi(t-x)p(x,x)-\phi(t+x)p(x,-x)$ in $L^1[-x,x]$. It follows from an analogous argument (but a little more complicated) that $\{(p_n)_t\}$ is Cauchy in $L^1[-x,x]$, i.e., there is a function in $L^1[-x,x]$, denoted by $p_t$, such that $\lim\limits_{n\to\infty}(p_n)_t\to p_t$. If we take limits for $(1+L_{\phi_n}^x)(p_n)_t$, we get the equation we need; moreover, we have
\[p_n(x,t)=p_n(x,0)+\int_0^t(p_n)_t(x,s)ds\]
Once we take limits on both sides, we get
\[p(x,t)=p(x,0)+\int_0^tp_t(x,s)ds\]
\subsubsection{The Proof of Corollary \ref{c4.3}}
We first discuss the uniform upper bound of the following operators.
\begin{lemma}
 \[\sup\limits_{x\in(0,N]}\{\lVert (1+T^x_\phi)^{-1}\rVert\}\leq t(\phi)\]
 \[\sup\limits_{x\in(0,N]}\{\lVert (1+C^x_\phi)^{-1}\rVert\}\leq c(\phi)\] 
 \[\sup\limits_{x\in(0,N]}\{\lVert (1+L^x_\phi)^{-1}\rVert\}\leq l(\phi)\]
 
 where $t(\phi),l(\phi),c(\phi)$ are constants determined only by $\phi$.
\end{lemma}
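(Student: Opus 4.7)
The bound for $T^x_\phi$ already appears inside the proof of Theorem \ref{t4.1}: there, the smallest eigenvalue $\lambda_{(min,N)}>0$ of $1+T^N_\phi$ was shown to satisfy $\lVert(1+T^x_\phi)f\rVert\geq\lambda_{(min,N)}\lVert f\rVert$ for every $x\in(0,N]$ and every $f\in L^2[-x,x]$, so Lemma \ref{l4.7} yields $\lVert(1+T^x_\phi)^{-1}\rVert\leq\lambda_{(min,N)}^{-1}$ independently of $x$. One can simply take $t(\phi):=\lambda_{(min,N)}^{-1}$.

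For $C^x_\phi$ and $L^x_\phi$ the plan is to split $(0,N]$ into a small-$x$ regime handled by a Neumann series and a large-$x$ regime handled by compactness. By absolute continuity of the Lebesgue integral, choose $N_1\in(0,N]$ so small that $\sup_{a\in\mathbb{R}}\int_a^{a+2N_1}|\phi(u)|\,du\leq 1/2$. For $x\in(0,N_1]$ a direct estimate yields $\lVert C^x_\phi\rVert_{C[-x,x]\to C[-x,x]}\leq 1/2$, and Fubini gives the same bound for $\lVert L^x_\phi\rVert_{L^1[-x,x]\to L^1[-x,x]}$; consequently, Neumann series produce $\lVert(1+C^x_\phi)^{-1}\rVert,\lVert(1+L^x_\phi)^{-1}\rVert\leq 2$ uniformly in $x\in(0,N_1]$.

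For $x\in[N_1,N]$ I would identify the varying domains with fixed ones via the isometries $\tau_x:C[-1,1]\to C[-x,x]$, $\tau_x f(t)=f(t/x)$, and $\sigma_x:L^1[-1,1]\to L^1[-x,x]$, $\sigma_x f(t)=x^{-1}f(t/x)$. The conjugates $\tilde C^x_\phi:=\tau_x^{-1}C^x_\phi\tau_x$ and $\tilde L^x_\phi:=\sigma_x^{-1}L^x_\phi\sigma_x$ are integral operators on the fixed spaces with common kernel $x\phi(x(t-s))$. Approximating $\phi$ by $\phi_n\in C^\infty_c(-2N,2N)\cap\Phi_N$ with $\phi_n\to\phi$ in $L^1(-2N,2N)$ (Corollary \ref{c4.1}) and invoking uniform continuity of each $\phi_n$, one obtains continuity of $x\mapsto\tilde C^x_\phi$ and $x\mapsto\tilde L^x_\phi$ in operator norm on $[N_1,N]$, in the spirit of Lemma \ref{l4.11}. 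Each $1+\tilde C^x_\phi$ and $1+\tilde L^x_\phi$ is invertible by Theorem \ref{t4.1} transported through isometries, and since inversion is continuous on the open set of invertible operators, the maps $x\mapsto\lVert(1+\tilde C^x_\phi)^{-1}\rVert$ and $x\mapsto\lVert(1+\tilde L^x_\phi)^{-1}\rVert$ are continuous on the compact interval $[N_1,N]$, hence bounded. Taking the maximum with the Neumann bound from the first regime produces the constants $c(\phi)$ and $l(\phi)$. The main obstacle is the operator-norm continuity statement for $\tilde L^x_\phi$: Lemma \ref{l4.11} was phrased only in the $C$-setting, but because the $L^1\to L^1$ operator norm of an integral operator is controlled by $\sup_s\int|k(t,s)|\,dt$, the same approximation-plus-uniform-continuity scheme carries over verbatim.
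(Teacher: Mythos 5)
Your proposal follows essentially the same route as the paper: the $T^x_\phi$ bound is quoted from the proof of Theorem~\ref{t4.1}, and for $C^x_\phi$ and $L^x_\phi$ you conjugate by the rescaling $t\mapsto t/x$ to land on a fixed space, invoke the operator-norm continuity of the rescaled family (Lemma~\ref{l4.11} and its $L^1$ analogue via Young's inequality), invertibility at each point, and a compactness argument on $[N_1,N]$. The only organizational difference is that you split off the small-$x$ regime explicitly with a Neumann series, whereas the paper handles it by showing $\gamma_{C_\phi}(x)\to 1$ as $x\to 0^+$ and extending $\gamma_{C_\phi}$ continuously to $[0,N]$; these are the same observation presented two ways.
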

\begin{proof}
The first inequality has been proved in Theorem \ref{t4.1}.
\par
To prove the second one, we define on $(0,N]$ a function for $\phi\in \Phi_N$ as follows:
\[\gamma_{C_\phi}(x):=\inf \{\lVert(1+C_\phi^x)f\rVert: f\in C[-x,x], \lVert f\rVert\geq 1\}\]
We claim that $\gamma_{C_\phi}(x)$ is continuous on $(0,N]$. Indeed, with the help of $K_{\phi}^x$, we have
\[\gamma_{C_\phi}(x)=\inf \{\lVert(1+K_\phi^x)q\rVert: q=f\circ h^{-1}\in C[-1,1], \lVert q\rVert\geq 1\}\]
Because
\[\lVert(1+K_\phi^x)q\rVert\leq \lVert(1+K_\phi^{x_0})q\rVert+\lVert K_\phi^x-K_\phi^{x_0}\rVert\cdot\lVert q\rVert_{C[-1,1]}\]
Then it follows that 
\[\gamma_{C_\phi}(x)\leq \gamma_{C_\phi}(x_0)+\lVert K_\phi^x-K_\phi^{x_0}\rVert\]
If we switch $K_\phi^x$ and $K_\phi^{x_0}$, then we have 
\[|\gamma_{C_\phi}(x_0)-\gamma_{C_\phi}(x)|\leq \lVert K_\phi^x-K_\phi^{x_0}\rVert\]
Recall Lemma \ref{l4.11}, we conclude that $\gamma_{C_\phi}(x)$ is continuous on $(0,N]$. Observe that $\lim\limits_{x\to0^+}\kappa_{\phi}(x)=0$, we have $\lim\limits_{x\to0^+}\gamma_{C_\phi}(x)=1$. if $\inf(\gamma_{C_\phi}(x))=0$, then there must be $x_0\in (0,N]$ such that $\gamma_{C_\phi}(x_0)=0$, but this is impossible due to Theorem \ref{t4.1}, Lemma \ref{l4.7} and the closed graph theorem. So we have the desired inequality.
\par
The third one is from the same argument. We define (the target operator is in $L^1[-1,1]$ rather than $C[-1,1]$, but we still use $K_{\phi}^x$)
\[\gamma_{L_\phi}(x):=\inf \{\lVert(1+L_\phi^x)f\rVert: f\in L^1[-x,x], \lVert f\rVert\geq 1\}\]
and rewrite it as 
\[\gamma_{L_\phi}(x)=\inf \{x\cdot\lVert(1+K_\phi^x)q\rVert: q=f\circ h^{-1}\in L^1[-1,1], x\cdot\lVert q\rVert\geq 1\}\]
$\lVert K_\phi^x-K_\phi^{x_0}\rVert$ now is from Young's inequality.
\end{proof}
In the sequel, we focus on the case $g=\frac{1}{2}$; the other scenario can be handled in the same way (the only nuance is that we need to consider $g_n$ obtained by substituting $\phi$ with $\phi_n$ in $g$).
\par
Recall in the proof of Theorem \ref{t4.2}, we have proved the following inequality: 
\[\lVert(1+C_{\phi_n}^x)^{-1}-(1+C_{\phi}^x)^{-1}\rVert\leq2\lVert(1+C_{\phi}^x)^{-1}\rVert\epsilon\]
Moreover, we can say more about it for all large $n$:
\[\lVert p_n-p\rVert_{C[-x,x]}\leq\frac{1}{2}\lVert(1+C_{\phi_n}^x)^{-1}-(1+C_{\phi}^x)^{-1}\rVert\leq \frac{\epsilon}{c(\phi)}\]
That is, $p_n$ converges to $p$ uniformly not only about $t$ but also about $x$; and as a result, we conclude that
\[\sup\limits_{(x,t)\in \overline{\Delta_N}} |p_n(x,t)|<M\]
for all $n$ and a large number $M$.
\par
We denote the difference quotient of a given function $f$ by
\[S_\epsilon f(x,t)=\frac{f(x+\epsilon,t)-f(x,t)}{\epsilon}\]
A calculation shows that for $\epsilon>0$,
\[(1+C_{\phi_n}^x)S_\epsilon p_n=\frac{1}{\epsilon}(-\int_x^{x+\epsilon}\phi_n(t-s)p_n(x+\epsilon,s)-\int_{-(x+\epsilon)}^{-x}\phi_n(t-s)p_n(x+\epsilon,s))\]
and for $\epsilon<0$,
\[(1+C_{\phi_n}^{x+\epsilon})S_\epsilon p_n=\frac{1}{\epsilon}(-\int_x^{x+\epsilon}\phi_n(t-s)p_n(x,s)-\int_{-(x+\epsilon)}^{-x}\phi_n(t-s)p_n(x,s))\]
\par
Let's deal with the first equation. Since the right-hand side converges to $-\phi_n(t-x)p_n(x,x)-\phi_n(t+x)p_n(x,-x)$ in $C[-x,x]$, hence $S_\epsilon p_n\to (p_n)_x$ in $C[-x,x]$ when $\epsilon\to 0$ from the right-hand side.
\par
To deal with the second equation, consider $(1+K^{x+\epsilon}_{\phi_n})(S_\epsilon p_n\circ h^{-1})$ and $(1+K^{x}_{\phi_n})((p_n)_x\circ h^{-1})$, moreover, we still have \[(1+K_{\phi_n}^{x+\epsilon})^{-1}=(1-(1+K_{\phi_n}^x)^{-1}(K_{\phi_n}^x-K_{\phi_n}^{x+\epsilon}))^{-1}(1+K_{\phi_n}^x)^{-1}\]
it follows that $S_\epsilon p_n\to (p_n)_x$ in $C[-x,x]$ when $\epsilon\to 0$ from the left-hand side.
\par
In all, $p_n$ is differentiable everywhere in term of $x$, and the (partial) derivative is $(p_n)_x$, and 
\[p_n(x,t)=p_n(|t|,t)+\int_{|t|}^x(p_n)_x(s,t)ds\]
Moreover, since $\{p_n\}$ is uniformly bounded in $\overline{\Delta_N}$, it follows that $-\phi_n(t-x)p_n(x,x)-\phi_n(t+x)p_n(x,-x)\to -\phi(t-x)p(x,x)-\phi(t+x)p(x,-x)$ in $L^1[-x,x]$ uniformly in $x\in(0,N]$, then it follows from the uniform bound of $\lVert(1+L_{\phi}^x)^{-1}\rVert$ in term of $x$ that  \[\lim\limits_{n\to\infty}\lVert(p_n)_x-p_x\rVert_{L^1[-x,x]}=0\] 
uniformly in $x$.
\par
This limit implies that $\lim\limits_{n\to\infty}\int_{-x}^x(p_n)_x(x,s)ds=\int_{-x}^x(p)_x(x,s)ds$ uniformly, and moreover, $\int_{-x}^x(p_n)_x(x,s)ds$ are continuous, hence $\int_{-x}^x(p)_x(x,s)ds$ is also continuous. It is easy to see that $\chi_{[-x,x]}(s)\cdot|(p_n)_x|(x,s)$ is a measurable function on $[-N,N]\times\mathbb{R}$, therefore, $\int_{-x}^x|(p_n)_x|(x,s)ds$ is measurable about $x$, hence, as the point limit of $\int_{-x}^x|(p_n)_x|(x,s)ds$,  $\int_{-x}^x|p_x|(x,s)ds$ is also measurable.
If we pick up a large $n$, then we have 
\begin{align*}
    &\int_0^xdm\int_{-m}^m|p_x(m,s)|ds&\\
    &\leq\int_0^x\lVert(p_n)_x-p_x\rVert_{L^1[-m,m]}dm+\int_0^xdm\int_{-m}^m|(p_n)_x(m,s)|ds&
\end{align*}
The first term of the second line is small enough, and $(p_n)_x$ is bounded, hence it follows that $\int_{-x}^x|p_x(x,s)|ds$ is integrable in terms of $x$.
\par
We also have
\begin{align*}
0&=\lim\limits_{n\to\infty}\int_0^xdm\int_{-m}^mh(s)((p_n)_x(m,s)-p_x(m,s))ds\\
  &=\lim\limits_{n\to\infty}\int_{-x}^xdsh(s)\int_{|s|}^x(p_n)_x(m,s)dm-\int_0^xdm\int_{-m}^mh(s)p_x(m,s)ds\\
  &=\lim\limits_{n\to\infty}\int_{-x}^xh(s)(p_n(x,s)-p_n(|s|,s))ds-\int_0^xdm\int_{-m}^mh(s)p_x(m,s)ds
\end{align*}
This gives 
\[\int_{-x}^xh(s)p(x,s)ds=\int_{-x}^xh(s)p(|s|,s)ds+\int_0^xdm\int_{-m}^mh(s)p_x(m,s)ds\]
\vspace{2cm}
\section{Gelfand-Levitan Condition for the Half-line Problem}
In this section, we discuss the half-line problem, that is, a Dirac operator (1) on $L^2[0,\infty)$. This topic is standard, and a related treatment can be found in \cite{8}.
\par
As before, we focus on canonical systems, more precisely, the Weyl function of a canonical system. Theorem \ref{2.2}, Corollary \ref{2.1}, and Corollary \ref{c2.2} imply that the spectral measure $\rho$ from a Dirac operator is the same as that from the corresponding canonical system; moreover, the unitary map $U$ implies $L^2(\mathbb{R}, \rho)$ is isometrically equivalent to $PW_N$ if we consider an interval. We will utilize this idea to establish our theorem.
\begin{definition}
 A function $\phi\in L^1_{loc}(\mathbb{R})$ is called Gelfand-Levitan (G-L), if $\phi$ is in the Gelfand-Levitan set $GL$, i.e., $\phi\in GL:=\bigcap \limits_{N>0}\Phi_N$, and this is interpreted as the restriction of $\phi$ on $(-2N,2N)$, denoted by $\phi_N$, belongs to $\Phi_N$ for any arbitrary $N>0$.
\end{definition}
Let's assume $M>N>0$ and $\phi\in GL$. $\phi_M$ gives us a canonical system $H_M$ on $(0,M)$ with $B(E_M)=PW_M$. Recall the construction of $H_M$, it follows that $B(E_N)=PW_N\subset PW_M$, and the norm is evaluated by considering the restriction of $\phi_M$ on $(-2N,2N)$, which is indeed $\phi_N$, and the canonical system $H_N$ on $(0,N)$ given by $\phi_N$ is the restriction of $H_M$ on $(0,N)$.
\par
We also recall that the Weyl function of a canonical system, as a Herglotz function, has the following representation:
\[m(z)=a+bz+\int_\mathbb{R}(\frac{1}{t-z}-\frac{t}{t^2+1})d\rho(t)
=a+\int_{\mathbb{R}_\infty} \frac{1+tz}{t-z}dv(t)\]
where $a\in\mathbb{R}$, $b\geq 0$, and $\rho$ a positive Borel measure on $\mathbb{R}$ with $\int_\mathbb{R}\frac{d\rho(t)}{t^2+1}<\infty$ which is indeed the spectral measure; moreover, $dv(t)=\frac{1}{1+t^2}d\rho(t)+b\delta_\infty$.
\begin{definition}
let $\rho$ be a positive Borel measure on $\mathbb{R}$ with  $\int_\mathbb{R}\frac{d\rho(t)}{t^2+1}<\infty$. Define a signed measure $d\sigma(t)=d\rho(t)-\frac{1}{\pi}dt$. We say $\rho$ satisfies the G-L condition if $\widehat{\sigma}=2\phi\in GL$ in the sense of distributions.  
\end{definition}
\begin{theorem}
\label{t5.1}
We denote a Dirac operator with the boundary condition $f_2(0)=0$ on $[0,\infty)$ by $Jf'-\mu f$.
\begin{enumerate}
    \item For a Dirac operator  with $\mu$ an $L^1_{loc}(0,\infty)$ function, its spectral measure $\rho$ satisfies the G-L condition.
    \item For a measure $\rho$ satisfying the G-L condition, it is the spectral measure of a Dirac operator with $\mu$ a continuous measure.
    \item For a measure $\rho$ satisfying the G-L condition and the corresponding G-L function $\phi\in L^2_{loc}(0,\infty)$, it is the spectral measure of a Dirac operator with $\mu$ an $L^1_{loc}(0,\infty)$ function.
\end{enumerate}
\end{theorem}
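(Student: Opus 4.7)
The plan is to reduce the three half-line statements to the finite-interval results already proved: Theorem~\ref{3.4} for the forward direction, Theorem~\ref{t4.3} (continuous recovery) for Part 2, and Theorem~\ref{t4.4} (absolutely continuous recovery) for Part 3. In each case the main engine will be the chain
\[
\text{Dirac on }[0,N]\ \longleftrightarrow\ \text{canonical system on }(0,N)\ \longleftrightarrow\ B(E_N)=PW_N,
\]
together with the spectral representation $U:L^2[0,\infty)\to L^2(\mathbb R,\rho)$ (Theorem~\ref{2.3}), which identifies, for $F\in PW_N$, the half-line norm $\int_{\mathbb R}|F|^2\,d\rho$ with the de Branges norm $\lVert F\rVert^2_{B(E_N)}$.

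For Part 1, I would start with a Dirac operator whose coefficient $\mu\in L^1_{loc}$ is absolutely continuous, and let $\rho$ be its spectral measure. For every $N>0$, Theorem~\ref{3.4} produces $\phi_N\in L^1(\mathbb R)$ with $\phi_N(x)=\overline{\phi_N(-x)}$ and $|E_N(t)|^{-2}=1+\widehat{\phi_N}(t)$, only $\phi_N|_{(-2N,2N)}$ being relevant for the norm. For $M>N$, the isometric inclusion $PW_N=B(E_N)\subset B(E_M)=PW_M$ forces $\phi_M|_{(-2N,2N)}$ and $\phi_N|_{(-2N,2N)}$ to induce the same inner product on $PW_N$; uniqueness of the convolution kernel (if $\psi\in L^1(-2N,2N)$ satisfies $\langle f,\psi\ast f\rangle=0$ for all $f\in L^2(-N,N)$, then $\psi=0$ a.e.\ on $(-2N,2N)$) then gives $\phi_M|_{(-2N,2N)}=\phi_N|_{(-2N,2N)}$. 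These are therefore restrictions of a single $\phi\in L^1_{loc}(\mathbb R)$ with $\phi|_{(-2N,2N)}\in\Phi_N$ for every $N$, i.e.\ $\phi\in GL$. To identify $\widehat{\sigma}=2\phi$, fix $F\in PW_N$ and combine $\int|F|^2\,d\rho=\lVert F\rVert^2_{B(E_N)}$ with the explicit norm formula of Theorem~\ref{3.4}; subtracting $\frac{1}{\pi}\int|F|^2\,dt$ gives
\[
\int_{\mathbb R}|F|^2\,d\sigma=\frac{1}{\pi}\int_{\mathbb R}|F|^2\,\widehat{\phi_N}(t)\,dt.
\]
Writing $F=\widehat{f}$ with $f\in L^2(-N,N)$, Parseval converts both sides into pairings against the autocorrelation $f\ast\widetilde f$ (supported in $(-2N,2N)$, with $\widetilde f(s)=\overline{f(-s)}$), yielding $\langle\widehat\sigma,f\ast\widetilde f\rangle=\langle 2\phi,f\ast\widetilde f\rangle$. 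Such autocorrelations are dense in a sufficient class of test functions on $(-2N,2N)$, and since $N$ is arbitrary, $\widehat\sigma=2\phi$ as distributions on $\mathbb R$.

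For Parts 2 and 3, I would reverse the construction. Given $\phi\in GL$, set $\phi_N:=\phi|_{(-2N,2N)}$ and apply Theorem~\ref{t4.3} to obtain, for each $N$, a continuous canonical system $H^{(N)}$ on $(0,N)$ with $B(E_N)=(PW_N,\lVert\cdot\rVert_{\phi,N})$. For $M>N$, the restriction of $H^{(M)}$ to $(0,N)$ yields a canonical system whose de Branges space is, by the same identification, $(PW_N,\lVert\cdot\rVert_{\phi,N})$; uniqueness in the correspondence between canonical systems (with $\operatorname{tr}H=1$) and regular de Branges spaces with the normalization of Lemma~\ref{l4.9} forces $H^{(M)}|_{(0,N)}=H^{(N)}$. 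The $H^{(N)}$ therefore patch to a global continuous canonical system $H$ on $(0,\infty)$, and Theorem~\ref{2.2} together with Corollary~\ref{c2.2} convert this to a continuous Dirac measure $\mu\in DS$. Reversing the Parseval calculation from Paragraph 2 shows that the spectral measure of $\mu$ recovers $\rho$, proving Part 2. For Part 3, the hypothesis $\phi\in L^2_{loc}$ gives $\phi_N\in\Phi_N\cap L^2(-2N,2N)$, so Theorem~\ref{t4.4} strengthens each $\mu^{(N)}$ to an absolutely continuous measure with $L^1$ density, and the same patching produces a global $L^1_{loc}$ coefficient.

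The main obstacle is the distributional identity $\widehat\sigma=2\phi$. The pointwise identity $|E_N|^{-2}=1+\widehat{\phi_N}$ together with the norm equality $\int|F|^2\,d\rho=\lVert F\rVert^2_{B(E_N)}$ must be upgraded to a statement about $\widehat\sigma$ and $\phi$ as distributions on all of $\mathbb R$, even though $\rho$ may have a singular part and the norm equality holds only on the subspace $PW_N$. Handling this requires verifying density of the autocorrelations $\{f\ast\widetilde f:f\in L^2(-N,N)\}$ in a suitable test-function class on $(-2N,2N)$ and using the freedom to let $N\to\infty$. The remaining point, the patching of the $H^{(N)}$, is conceptually straightforward once one invokes the uniqueness clauses in Theorem~\ref{3.1}, Theorem~\ref{2.2}, and the Ordering Theorem, together with the normalization from Lemma~\ref{l4.9}.
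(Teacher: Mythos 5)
Your proposal follows the same architecture as the paper's proof: reduce the half-line claims to Theorem~\ref{3.4} (forward), and to Lemma~\ref{l4.9}, Theorem~\ref{t4.3}, Theorem~\ref{t4.4} (reverse), then pass the distributional identity $\widehat\sigma=2\phi$ (up to conjugation conventions) through Parseval and a polarization/density argument on autocorrelations $f\ast\widetilde f$.

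The one genuine variation is your handling of the distributional identity in Part 1. The paper works with the \emph{interval} spectral measures $\rho_N$, using the unitary equivalence $L^2(\mathbb R,\rho_N)\cong B(E_N)$, establishes $(\widehat{\sigma_N},g)=(2\overline{\phi},g)$ for each fixed $N$, and then passes $N\to\infty$ via the weak convergence $\rho_N\to\rho$ to land on $(\widehat{\sigma},g)=(2\overline{\phi},g)$. You instead work directly with the half-line measure $\rho$, exploiting that $U|_{L^2[0,N]}$ is simultaneously an isometry onto $B(E_N)$ (de Branges norm) and an isometry into $L^2(\rho)$, so $\int|F|^2\,d\rho=\lVert F\rVert^2_{B(E_N)}$ for $F\in PW_N$. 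This skips the $\rho_N\to\rho$ limiting step at the cost of needing to observe that $\int|F|^2\,d\rho<\infty$ for all $F\in PW_N$ (which is automatic from the isometry). Both routes are sound; yours is marginally more direct, while the paper's version has the virtue of only ever pairing $\widehat{\sigma_N}$ against test functions and letting the weak limit do the rest. You also make explicit the consistency/patching step that the paper only states (that $H^{(M)}|_{(0,N)}=H^{(N)}$, hence the $H^{(N)}$ glue to a global system) — this is correct and a useful addition, though you should note it is the conjugate-kernel normalization $\widehat F(0)=\widetilde F(0)$ from Lemma~\ref{l4.9} (together with $\det H=1$, $H(0)=I$) that pins $H$ down, not merely $\operatorname{tr}H=1$. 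Finally, you compress the last step of Part 2 (passing from $\widehat\sigma=\widehat{\sigma_{\mathrm{real}}}$ to $\rho=\rho_{\mathrm{real}}$ as measures) to ``reversing the Parseval calculation''; the paper is more careful here, using injectivity of the Fourier transform on $\mathcal S'$, the substitution $g\mapsto (t^2+1)^{-1}g$, and dominated-convergence approximation of indicator functions of intervals, and you should spell that out.
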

\begin{proof}
Assume we have such a Dirac operator on $[0,\infty)$ with the spectral measure $\rho$. The restriction of this operator on $[0,N]$ gives us a Weyl function $m_N$ by giving it a boundary condition at $N$  (for instance, $\beta=\frac{\pi}{2}$).  Based on Weyl theory, those functions $m_N$ converge to the Weyl function of the half line $m$ in the sense of locally uniform convergence, and spectral measures $\rho_N\to \rho$ weakly for every function $(t^2+1)f(t)\in C_0(\mathbb{R})$.
\par
On $[0,N]$, Theorem \ref{3.4} gives $\phi_N\in \Phi_N$ and for $F\in PW_N$
\[\lVert F\rVert^2_{B(E_N)}=2\langle f,f+\phi_N{*}f\rangle\]
Those $\phi_N$, of course, give a function $\phi\in GL$, and we can replace $\phi$ for $\phi_N$ above. Recall the unitary map from the spectral representation and the definition of $Uf(z)$, we know $L^2(\mathbb{R},\rho_N)=B(E_N)$ isometrically. A calculation (recall our definition is $\widehat{f}(z)=\int f(t)e^{itz}dt$) gives for all $f\in L^2(-N,N)$ with $d\sigma_N(t)=d\rho_N(t)-\frac{1}{\pi}dt$ that
\[\langle f,2\phi{*}f\rangle=\int_{\mathbb{R}}\lvert F\rvert^2d\sigma_N\]
As the Fourier transform of a tempered distribution, $\widehat{\sigma_N}$ is again tempered defined as $(\widehat{\sigma_N},g)=(\sigma_N,\widehat{g})$.
\par
For all $f\in C_0^{\infty}(-N,N)$ and write $f_r(x)=\overline{f}(-x)$, we have $\widehat{g}=|\widehat{f}|^2$ if $g:=f{*}f_r$, therefore, if we consider $f\in C_0^{\infty}(-N,N)$, it follows that
\[\langle f,2\phi{*}f\rangle=2\int_{-N}^Ndx\overline{f}(x)\int_{-2N}^{2N}\phi(t)f(x-t)dt=\int_{-2N}^{2N}2\overline{\phi}(t)g(t)dt\]
Thus for all $g=f{*}f_r$ with $f\in C_0^{\infty}(-N,N)$, we have
\begin{equation}
(\widehat{\sigma_N},g)=(2\overline{\phi},g)    
\end{equation}
Now, we want to show that (20) is also true for an arbitrary $g\in C_0^{\infty}(-2N,2N)$. The argument is (literally) the same as that in the proof of Theorem 6.20 in \cite{1}, so we only state it briefly: if this is true for $f{*}f_r$, then it is also true for $h{*}f_r$ by polarization, and so as the linear combinations of such functions. To approximate a general $g \in C_0^{\infty}(-2N,2N)$, we can rewrite $g=g_1+g_2+g_3$ whose supports are of diameter less that $2N$. For such a $g_i$, there must be a constant $a$ such that $g_i(t-a)\in C_0^{\infty}(-N,N)$, and this function $g_i(t-a)$ can be approximated by taking convolutions with an approximate identity.
\par
For a test function $g \in \mathcal{D}(\mathbb{R})$, there is a constant $N$ such that $g\in C_0^{\infty}(-2N,2N)$, then we must have
\[(\widehat{\sigma_N},g)=(2\overline{\phi},g)\]
Since $\widehat{g}\in \mathcal{S}(\mathbb{R})$, hence we have
\[\lim\limits_{N\to\infty}\int_{\mathbb{R}}\widehat{g}d\sigma_N=\int_{\mathbb{R}}\widehat{g}d\sigma=(\widehat{\sigma},g)\]
The last one is true because, as a spectral measure, $\int_\mathbb{R}\frac{d\rho(t)}{t^2+1}<\infty$ automatically, so $\widehat{\sigma}$ is a tempered distribution.
\par
We have proved $\widehat{\sigma}=2\overline{\phi}$, and $\overline{\phi}\in GL$, namely, $\rho$ satisfies G-L condition.
\par
Conversely, assume we have such a measure $\rho$. Since $\widehat{\sigma}=2\overline{\phi}\in GL$, we can consider $\phi_N\in \phi_N$ as the restriction of $\phi$ on $(-2N,2N)$. Lemma \ref{l4.9} and Theorem \ref{t4.3} (and Theorem \ref{t4.4}) give us a canonical system $H$ on $[0,\infty)$, and the restriction of $H$ on $[0,N]$ gives $\phi_N$. As we have proved, the spectral measure of $H$, denoted by $\rho_{real}$$(\sigma_{real})$, satisfies $\widehat{\sigma}=\widehat{\sigma_{real}}$, or equivalently, $\widehat{\rho}=\widehat{\rho_{real}}$ in the sense of distributions; moreover, in the sense of tempered distributions by extending them continuously.
\par
For $g\in \mathcal{S}(\mathbb{R})$, we have
\[\int_{\mathbb{R}}\widehat{g}d\rho=\int_{\mathbb{R}}\widehat{g}d\rho_{real}\]

Since the Fourier transform is a bijection on $\mathcal{S}(\mathbb{R})$, so actually we don't need Fourier transform here:
\[\int_{\mathbb{R}}gd\rho=\int_{\mathbb{R}}gd\rho_{real}\]
 Let's consider complex measures rather than positive measures, namely,
\[\int_{\mathbb{R}}(t^2+1)g\cdot\frac{1}{t^2+1} d\rho=\int_{\mathbb{R}}(t^2+1)g\cdot \frac{1}{t^2+1}d\rho_{real}\] 
For any $f\in \mathcal{S}(\mathbb{R})$, the equation $(t^2+1)g=f$ will give a solution $g$ in $\mathcal{S}(\mathbb{R})$, hence, we have for $g\in \mathcal{S}(\mathbb{R})$
\[\int_{\mathbb{R}}g\cdot\frac{1}{t^2+1} d\rho=\int_{\mathbb{R}}g\cdot \frac{1}{t^2+1}d\rho_{real}\] 
As two Borel measures, we only need to focus on open intervals; hence, we consider a characteristic function $\chi_{(a,b)}(x)$. It is standard to find a sequence from $\mathcal{S}(\mathbb{R})$ such that $f_n\to \chi_{(a,b)}(x)$ pointwisely and $0\leq f_n\leq1$(for instance, consider $h(x)=\begin{cases}
    e^{-\frac{1}{x}}&\text{if  } x>0\\
    0&\text{if } x\leq 0
\end{cases}$ and $k(x)=\frac{h(x)}{h(x)+h(1-x)}$, then use $k$ to construct this sequence). By applying Lebesgue's dominated convergence theorem, we have
\[\frac{1}{t^2+1} d\rho=\frac{1}{t^2+1}d\rho_{real}\]
namely, $d\rho=d\rho_{real}$ is the spectral measure of the Dirac operator given before.
\end{proof}
In Part 2 and Part 3, the Dirac operator is uniquely determined by the spectral measure. To see this, we recall that for a Dirac operator with the boundary condition, its Weyl function is given by (Claim 5.6 in \cite{12}) 
\[m(z)=a+\int_\mathbb{R}(\frac{1}{t-z}-\frac{t}{t^2+1})d\rho(t)\]
where $\rho(\mathbb{R})=\infty$ and $\rho$ is not compact supported.
\par
If there are two different Dirac operators, then $m_1(z)=c+m_2(z)$ for some constant $c$, therefore we have 
\[H_1=\begin{pmatrix}
    1&0\\
    -c&1
\end{pmatrix}H_2\begin{pmatrix}
    1&-c\\
    0&1
\end{pmatrix}\]
Since $H_1(0)=H_2(0)=I$, we get $c=0$, i.e., we cannot have two different Dirac operators sharing the same spectral measure.

\vspace{2cm}

\vspace{1cm}
DEPARTMENT OF MATHEMATICS, 3900 UNIVERSITY BLVD., UNIVERSITY OF TEXAS AT TYLER, TYLER, TX, 75799
\\
Email address: jzeng@uttyler.edu
\end{document}